\theoremstyle{plain}
\newtheorem{theorem}{Theorem}[section]
\newtheorem{lemma}{Lemma}[section]
\newtheorem{corollary}{Corollary}[section]
\newtheorem{remark}{Remark}[section]
\newcommand{\lr} {\Delta_r}
\newcommand{\gr} {\nabla_r}
\newcommand{\hn}{\mathbb{H}^{N}}
\newcommand{\cchn}{C_c^\infty(\hn)}
\newcommand{\rgt}{|\nabla_{r}u|^2}
\newcommand{\rlt} {|\Delta_{r} u|^2}
\newcommand{\dv}{\: {\rm d}v_{\hn}}
\newcommand{\cchnm}{C_c^\infty(\hn\setminus\{o\})}
\numberwithin{equation}{section} \allowdisplaybreaks
\begin{document}
	
	\title[On Higher order Poincar\'e inequalities with radial derivatives]{On Higher order Poincar\'e Inequalities with radial derivatives and Hardy improvements\\ on the hyperbolic space}
	
	\author[Prasun Roychowdhury]{Prasun Roychowdhury}
	\address{Department of Mathematics,
		Indian Institute of Science Education and Research,
		Dr.\ Homi Bhabha Road, Pune 411008, India}
	\email{prasunroychowdhury1994@gmail.com}
	\subjclass[2010]{26D10, 46E35, 31C12}
	\keywords{Higher order Poincar\'e inequality, Poincar\'e-Hardy inequality, Hyperbolic space}
	\date{\today}
	\maketitle

\begin{abstract}
	In this paper we  prove higher order Poincar\'e inequalities involving radial derivatives namely, 
	\begin{equation*}
	\int_{\hn}  |\nabla_{r,\hn}^{k} u|^2 \, {\rm d}v_{\hn} \geq \bigg(\frac{N-1}{2}\bigg)^{2(k-l)} \int_{\hn} |\nabla_{r,\hn}^{l} u|^2 \, {\rm d}v_{\hn} \ \ \text{ for all } u\in H^k(\hn),
	\end{equation*}
	where underlying space is $N$-dimensional hyperbolic space $\hn$, $0\leq l<k$ are integers and the constant $\big(\frac{N-1}{2}\big)^{2(k-l)}$ is sharp. Furthermore  we improve the above inequalities by adding  Hardy-type remainder terms  and the sharpness of some constants is also discussed.
       \end{abstract}

	\section{Introduction}
	Let $(M,g)$ be a Cartan-Hadamard manifold with dimension $N$ (namely, a manifold which is complete, simply connected and has everywhere non positive sectional curvature). In addition suppose Cartan-Hadamard manifolds whose sectional curvatures are bounded above by a strictly negative constant, then $M$ is known to admit a Poincar\'e inequality  which reads as follows, there exists $\Lambda>0$ such that 
	\begin{align}\label{gen_poin}
	    \int_{M} |\nabla_g u|^2 \ {\rm d}v_g \geq \Lambda \int_{M} |u|^2 \ {\rm d}v_g \text{ for all }u \in C_c^\infty(M),
	\end{align}
	where $\nabla_g$ and ${\rm d}v_g$ defines the Riemannian gradient and  volume element in $(M,g)$.
	
	\medskip 
	
	Let $\hn$ be the $N$-dimensional \emph{hyperbolic space} which is one of the most discussed Cartan-Hadamard manifolds. Indeed it enjoys all the property namely it is complete, simply connected, and has constant negative curvature.  Now for the space $\hn$, \eqref{gen_poin} holds true and $\Lambda$ turns out to be $\big(\frac{N-1}{2}\big)^2$ and moreover $\big(\frac{N-1}{2}\big)^2$ coincides with the bottom of spectrum of the Laplace-Beltrami operator on $\hn.$  
	
	\medskip
	
	Analogous to \eqref{gen_poin}, higher order Poincar\'e inequality involving higher order derivatives also holds in $\hn$. In this context, a worthy reference on this inequality is \cite[Lemma 2.4]{SK} where it has been shown that for $k$ and $l$ be non-negative integers with $0\leq l<k$ there holds 
	
	\begin{equation}\label{higher_poin}
	\int_{\hn}  |\nabla_{\hn}^{k} u|^2 \, {\rm d}v_{\hn} \geq \bigg(\frac{N-1}{2}\bigg)^{2(k-l)} \int_{\hn} |\nabla_{\hn}^{l} u|^2 \, {\rm d}v_{\hn}
	\end{equation}
	holds for all $u\in H^k(\hn)$, where 
	\begin{equation*}
	 \nabla_{\hn}^{k} :=
	\begin{dcases}
		\Delta_{\hn}^{k/2} & \text{if } k \text{ is even integer}, \\
		\nabla_{\hn} \Delta_{\hn}^{(k-1)/2}  & \text{if } k \text{ is odd integer.} \\
	\end{dcases}
	\end{equation*}
	Also $\Delta_{\hn}^k$ denotes the $k$-th iterated Laplace-Beltrami operator and $\nabla_{\hn}$ represents the Riemannian gradient in $\hn$. By constructing a minimizing sequence one can show the above constant in \eqref{higher_poin} is sharp (see \cite{NGO}).  Also it is worth to mention that the following infimum is not achieved
\begin{align*}
\underset{u\in H^k(\hn)\setminus \{0\}}{\inf} \frac{\int_{\hn} |\nabla_{\hn}^{k} u|^2 \dv }{\int_{\hn}|\nabla_{\hn}^{l} u|^2 \dv} =  \bigg(\frac{N-1}{2} \bigg)^{2(k-l)}.
\end{align*}
 This  marks an important step in the development of a  comprehensive study of inequality \eqref{higher_poin} related to its improvement.  We refer to \cite{AK,EG, EGG, MS, VHN1} for more details. Before going further, let us briefly discuss Hardy inequality and related improvements.

\medskip 

The last couple of decades have witnessed remarkable advances in the studies of  Hardy inequalities, Rellich inequalities, and related $L^2$- improvements on the Euclidean space. In this context, the seminal works of Brezis-Marcus \cite{BrezisM} and  Brezis-Vazquez \cite{Brezis} are the most significant. We recall some of the works without any claim of completeness the papers \cite{BFT,BFT2,BT,DH,FT,gaz,GM,MMP,TZ}, and the references quoted therein.  Furthermore, Carron  \cite{Carron} derived the classical Hardy inequality on Riemannian manifolds which open up new directions in the study of Hardy inequality on non-trivial geometry.  Among all the recent work in these directions,  we are bringing up only a few of them \cite{ Mitidieri2, Dambrosio, pinch, Kombe1,Kombe2,Mitidieri,YSK,LW} without a claim of completeness. A large part of these works dealt with an improvement of inequalities with \emph{optimal Hardy weight}. One of the most influential results was obtained in \cite{pinch} where \emph{optimal Hardy weight} has been derived for the general second-order elliptic operator.  

\medskip

Drawing primary
motivation from the above improvement of Hardy inequalities with $L^2$ reminder term, now one can talk about the  improvement of \eqref{gen_poin} on $\hn,$ 
by an improvement we mean here, a Hardy-type.  This has been considered in  \cite[Theorem 2.1]{EGG} related to improvement of \eqref{higher_poin} in the case $k=1$ and $l=0$ with Hardy-type remainder terms which says for $N\geq 3$ there holds 
\begin{equation}\label{or10}
	\int_{\hn} |\nabla_{\hn} u|^2 \ {\rm d}v_{\hn} -  \left( \frac{N-1}{2} \right)^{2} \int_{\hn} u^2 \ {\rm d}v_{\hn} \geq \frac{1}{4} \int_{\hn} \frac{u^2}{r^2} \ {\rm d}v_{\hn}
\end{equation}
for $u\in\cchnm$. Here $r=\rho(x,o)$ denotes the geodesic distance between a point $x$ and a fixed pole $o$ in $\hn$. Also note that both constants $\big(\frac{N-1}{2}\big)^2$ and $\frac{1}{4}$ are sharp in an obvious sense. Recently a sharper version of the above inequality considering only the \emph{radial} part of the gradient has been obtained in \cite[Theorem 2.1]{EGR} which reads as follows 

	\begin{equation}\label{r10}
	\int_{\hn} |\nabla_{r,\hn} u|^2 \ {\rm d}v_{\hn} -  \left( \frac{N-1}{2} \right)^{2} \int_{\hn} u^2 \ {\rm d}v_{\hn} \geq \frac{1}{4} \int_{\hn} \frac{u^2}{r^2} \ {\rm d}v_{\hn},
	\end{equation}
	where the constants $\big(\frac{N-1}{2}\big)^2$ and $\frac{1}{4}$ are sharp in the same sense as like in \eqref{or10}. Here $``\nabla_{r,\hn} u"$ represents the radial part of the gradient in $\hn$ and more details will be given in subsequent section. As a matter of the fact,  using \emph{Gauss's Lemma} one has $|\nabla_{\hn}u|\geq |\nabla_{r,\hn} u|$ and this readily resolves the optimality issue of the constant  $\big(\frac{N-1}{2}\big)^2$. Also note that sharpness of the constant $\frac{1}{4}$ has been proved by constructing minimizing sequence and through some delicate analysis in \cite{EGR}.
	
	\medskip

	Apart from improved ``radial Poincar\'e inequality" with Hardy term, the result in \cite[Corollary~2.3]{EGR} also dealt with the following Rellich-Poincar\'e inequality which reads as follows
	\begin{equation}\label{r21d}
	\int_{\hn} |\Delta_{r,\hn} u|^2 \ {\rm d}v_{\hn}-\left( \frac{N-1}{2} \right)^{2} \int_{\hn} |\nabla_{r,\hn} u|^2 \ {\rm d}v_{\hn}  \geq \frac{1}{4} \int_{\hn} \frac{1}{r^2}|\nabla_{r,\hn} u|^2 \ {\rm d}v_{\hn},
	\end{equation}
	where  $\left( \frac{N-1}{2} \right)^{2}$ being the sharp constant and the operator $``\Delta_{r,\hn} u"$ denotes the radial part of Laplace-Beltrami operator on the hyperbolic space (See Section~\ref{sect_est_poin}).

	\medskip

	All the above discussions mainly focus on improvement of \eqref{higher_poin} with Hardy type remainder terms, at least for the cases $k =1, l=0$ and 
	$k=2, l=1$ for the radial derivatives. So naturally one can ask whether an inequality of type \eqref{higher_poin} involving, only higher order radial derivatives holds true. Indeed the answer is affirmative, see Theorem~\ref{higher_r_poin}, namely for non-negative integers $k$ and $l$ with $0 \leq l < k,$ there holds   
	
	\begin{equation}\label{d_high_poin}
	\int_{\hn}  |\nabla_{r,\hn}^{k} u|^2 \ {\rm d}v_{\hn} \geq \bigg(\frac{N-1}{2}\bigg)^{2(k-l)} \int_{\hn} |\nabla_{r,\hn}^{l} u|^2 \ {\rm d}v_{\hn} \ \ \text{ for all } u\in H^k(\hn).
	\end{equation}

	 Moreover 
	 the constant $\big(\frac{N-1}{2}\big)^{2(k-l)}$ is sharp which is intercepted by the delicate use of integral representation of the volume of a ball in hyperbolic space $\hn$ and by using some clever estimates derived in \cite{NGO}. For the uniformity of the work, we shall discuss all the things briefly in Section \ref{sect_est_poin}.

	\medskip

 The next aim of this article is to establish improved Hardy-type inequality 	associated with  above radial higher order Poincar\'e inequality \eqref{d_high_poin}. This improvement is very much align with \cite[Theorem 2.1]{EG}. We briefly recall the theorem which reads 
	  as for integer $k,l$ with $0\leq l<k$ and $N>2k$, then there exist $k$ positive constants $\alpha^{j}_{k,l}=\alpha^{j}_{k,l}(N)$ such that following inequality holds 
	\begin{align}\label{imp_high_poin}
	\int_{\hn}  |\nabla_{\hn}^{k} u|^2 \, {\rm d}v_{\hn} & \geq \bigg(\frac{N-1}{2}\bigg)^{2(k-l)} \int_{\hn} |\nabla_{\hn}^{l} u|^2 \, {\rm d}v_{\hn}+ \alpha^{1}_{k,l} \int_{\hn}\frac{u^2}{r^{2}}\dv\\ \notag &+ \sum_{j=2}^{k-1}\alpha^{j}_{k,l}\int_{\hn}\frac{u^2}{r^{2j}}\dv+\alpha^{k}_{k,l}\int_{\hn}\frac{u^2}{r^{2k}}\dv.
	\end{align}
	Also note that $\alpha^{1}_{k,l}$ and $\alpha^{k}_{k,l}$ signifies the coefficient for the leading term as $r\rightarrow 0$ and $r\rightarrow \infty$. In the same spirit we would like to obtain the above inequality involving only the higher order radial derivatives. Of course it is not at all a straight forward generalization of above. We need to devise all together a new strategy to obtain our desired results. To this end, we briefly explain the result we obtain in Section~\ref{sect_improve},  for non-negative integer $k$ and $l$ with $0\leq l<k$ for $N>2k$ there exist $k$ positive constants $C_{k,l}^j$ such that 
	\begin{align}\label{imp_r_high_poin}
	\int_{\hn}  |\nabla_{r,\hn}^{k} u|^2 \, {\rm d}v_{\hn}  - \bigg(\frac{N-1}{2}\bigg)^{2(k-l)} \int_{\hn} |\nabla_{r,\hn}^{l} u|^2 \, {\rm d}v_{\hn}\geq \sum_{j=1}^{k}C^{j}_{k,l}\int_{\hn}\frac{u^2}{r^{2j}}\dv.
	\end{align}
	In a similar fashion, like in \cite{EG}, here also we calculate the explicit expression of $C_{k,l}^1$ and $C_{k,l}^k$ related to dominating term for $r\rightarrow 0$ and $r \rightarrow \infty$ respectively.  Moreover we obtain another version of improvement  \eqref{higher_r_poin} with dimension restriction $N\geq 4k-1$ but with a better constants in front of the leading order Hardy term. We refer the readers to Section \ref{sect_improve} for detail study of these inequalities. 	
	
	\medskip
	
	The article is organized as follows: Section \ref{sect_est_poin} is devoted to set up higher order Poincar\'e inequality in terms of higher order radial derivatives involving the Riemannian gradient and Laplace-Beltrami operator on the hyperbolic space $\hn$. In this section, we also discuss the optimality issue. The next Section \ref{sect_lemma} is devoted to some of the key lemmas to prove Hardy-type improvements. In the last Section \ref{sect_improve} we prove \eqref{imp_r_high_poin} with detailed description of coefficient related to asymptotic Hardy type remainder terms. Broadly speaking here we deal the general case of integers by separating it into odd-even cases and exploiting induction as a key ingredient to reach towards our result.
	
    \medskip

	\section{Higher order Poincar\'e Inequality with radial derivatives}\label{sect_est_poin}
	
	The $N$-dimensional hyperbolic space $\hn$ admits \it Riemannian Model \rm manifold structure whose metric $g$ is represented in spherical coordinates as follows
	\begin{equation*}
	{\rm d}s^2 = {\rm d}r^2 + \psi^2(r) \, {\rm d}\omega^2,
	\end{equation*}
	where ${\rm d}\omega^2$ is the metric on sphere $\mathbb{S}^{N-1}$ and $\psi(r)=\sinh r$. We refer the readers to \cite{RGR,PP} for more details about Riemannian Model manifold structure. By means of this structure of $\hn$, it admits polar coordinate transformation and we can write $x=(r,\sigma)\in(0,\infty)\times\mathbb{S}^{N-1}$ which will be used several times in this article. Let us first define the following two quantities. For  $u\in C_c^\infty(\hn)$ we write
	\begin{align*}
	\Delta_{r,\hn}u := \frac{\partial^2 u}{\partial r^2}+(N-1)\coth r \frac{\partial u}{\partial r}\quad \text{ and } \quad
	\nabla_{r,\hn}u := \big(\frac{\partial u}{\partial r},0\big).
	\end{align*}
		These two quantities are so called radial contribution of the Laplace-Beltrami operator and Riemannian gradient in $\hn$ respectively. For notational economy we will always use $\Delta_{r,\hn}=\Delta_r, \ \nabla_{r,\hn}=\nabla_{r}$ and finally for any non-negative integer $k$ we denote $\nabla_{r,\hn}^{k}=\nabla_{r}^k$, which is described as
	\begin{equation*}
	\nabla_{r,\hn}^{k} :=
	\begin{dcases}
	\Delta_{r,\hn}^{k/2} & \text{if } k \text{ is even integer}, \\
	\nabla_{r,\hn} \Delta_{r,\hn}^{(k-1)/2}  & \text{if } k \text{ is odd integer.} \\
	\end{dcases}
	\end{equation*}
	
	Before proving the main result we want to verify one useful tool in Partial Differential Equation namely, \emph{integration by parts} formula.
		\begin{lemma}\label{lemma_1}
			Let $f$ and $g \in \cchn$. Then it holds
			\begin{equation*}
				\int_{\hn} (\Delta_{r} f)\ g \: {\rm d}v_{\hn} = -\int_{\hn} (\nabla_r f) \cdot (\nabla_r g) \: {\rm d}v_{\hn} = \int_{\hn} f\ (\Delta_{r} g)\: {\rm d}v_{\hn}.
			\end{equation*}
		\end{lemma}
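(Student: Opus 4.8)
The plan is to pass to geodesic polar coordinates $x=(r,\sigma)\in(0,\infty)\times\mathbb{S}^{N-1}$, in which the volume element factorises as $\dv = (\sinh r)^{N-1}\,{\rm d}r\,{\rm d}\omega$, and then to reduce both asserted identities to a single one-dimensional integration by parts in the radial variable $r$. The point is that the radial Laplacian $\Delta_r u = u_{rr}+(N-1)\coth r\, u_r$ is, up to the Jacobian weight, in divergence form.

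First I would record the elementary but crucial identity that for every $u\in\cchn$,
\[
(\sinh r)^{N-1}\,\Delta_r u \;=\; \frac{\partial}{\partial r}\!\left((\sinh r)^{N-1}\,\frac{\partial u}{\partial r}\right),
\]
which follows at once by differentiating the right-hand side, using $\frac{d}{dr}(\sinh r)^{N-1}=(N-1)(\sinh r)^{N-2}\cosh r$, and recalling $\coth r=\cosh r/\sinh r$. Hence, writing the integral over $\hn$ as an iterated integral,
\[
\int_{\hn}(\Delta_r f)\,g\,\dv \;=\; \int_{\mathbb{S}^{N-1}}\!\int_0^\infty \frac{\partial}{\partial r}\!\left((\sinh r)^{N-1} f_r\right) g \,\,{\rm d}r\,{\rm d}\omega .
\]

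Next I would integrate by parts in $r$ in the inner integral. The boundary contribution as $r\to\infty$ vanishes because $f$ and $g$ are compactly supported, and the contribution at $r=0$ is $(\sinh r)^{N-1} f_r\, g\big|_{r=0}$, which vanishes since $(\sinh r)^{N-1}\to 0$ while $f_r$ and $g$ stay bounded near the pole $o$. This leaves
\[
\int_{\hn}(\Delta_r f)\,g\,\dv \;=\; -\int_{\mathbb{S}^{N-1}}\!\int_0^\infty (\sinh r)^{N-1}\, f_r\, g_r \,\,{\rm d}r\,{\rm d}\omega \;=\; -\int_{\hn} f_r\, g_r \,\dv \;=\; -\int_{\hn}(\nabla_r f)\cdot(\nabla_r g)\,\dv ,
\]
the last equality because $\nabla_r u=(u_r,0)$, so the inner product of $\nabla_r f$ and $\nabla_r g$ is simply $f_r g_r$. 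Finally, the middle expression is symmetric in $f$ and $g$, so applying the identity just proved with $f$ and $g$ interchanged gives $-\int_{\hn}(\nabla_r f)\cdot(\nabla_r g)\,\dv = \int_{\hn} f\,(\Delta_r g)\,\dv$, which closes the chain of equalities.

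The one place that deserves care — and thus the main (mild) obstacle — is the justification that the boundary term at the pole $r=0$ really vanishes: one must control $f_r$ against the $\coth r\sim 1/r$ degeneracy in $\Delta_r$. This is exactly where it matters that $f,g\in\cchn$ are smooth on the whole manifold $\hn$ (including $o$), not merely on $\hn\setminus\{o\}$; smoothness at the pole forces $f_r=O(r)$ as $r\to0$, so that $(\sinh r)^{N-1}f_r g = O(r^N)\to 0$ and, likewise, the integrals written above are all absolutely convergent.
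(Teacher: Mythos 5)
Your proposal is correct and follows essentially the same route as the paper: pass to polar coordinates, use that $(\sinh r)^{N-1}\Delta_r u=\partial_r\big((\sinh r)^{N-1}\partial_r u\big)$, and integrate by parts in the radial variable, with the second identity following by symmetry. The only difference is that you make explicit the vanishing of the boundary terms at $r=0$ and $r=\infty$, which the paper's proof leaves implicit.
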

		\begin{proof}
			Exploiting polar coordinate transformation and by parts formula on first variable i.e., in radial coordinate we deduce
			\begin{align*}
				& \int_{\hn} (\Delta_{r} f)\ g \: {\rm d}v_{\hn}=\int_{\mathbb{S}^{N-1}}\int_{0}^{\infty}\bigg(\frac{\partial^2 f}{\partial r^2}+(N-1)\frac{\psi^{\prime}}{\psi} \frac{\partial f}{\partial r}\bigg)\psi^{(N-1)}\ g \: {\rm d}r \ {\rm d}\sigma\\  &=-\int_{\mathbb{S}^{N-1}}\int_{0}^{\infty} \frac{\partial f}{\partial r} \frac{\partial g}{\partial r} \psi^{(N-1)} \: {\rm d}r \ {\rm d}\sigma= -\int_{\hn} (\nabla_r f) \cdot (\nabla_r g) \: {\rm d}v_{\hn} \\& =\int_{\mathbb{S}^{N-1}}\int_{0}^{\infty} f \frac{\partial^2 g}{\partial r^2}\psi^{(N-1)} \: {\rm d}r \ {\rm d}\sigma+(N-1)\int_{\mathbb{S}^{N-1}}\int_{0}^{\infty}f\frac{\psi^{\prime}}{\psi} \frac{\partial g}{\partial r}\psi^{(N-1)} \: {\rm d}r \ {\rm d}\sigma = \int_{\hn} f\ (\Delta_{r} g)\: {\rm d}v_{\hn}.
			\end{align*}
		\end{proof}
	
	We are now ready to discuss one of our main result.	
	\begin{theorem}\label{higher_r_poin}
		 For all non-negative integers $l \text{ and } k$ with $0\leq l<k$ and for all $N\geq 3$ there holds 
		\begin{equation}\label{eq_main_th_1}
		\int_{\hn}  |\nabla_{r}^{k} u|^2 \, \dv \geq \bigg(\frac{N-1}{2}\bigg)^{2(k-l)} \int_{\hn} |\nabla_{r}^{l} u|^2 \, \dv \ \text{ for all } u\in H^k(\hn).
		\end{equation}
	 Also the constant $\big(\frac{N-1}{2}\big)^{2(k-l)}$ is optimal in a sense that no inequality of the form 
	 \begin{equation*}
		\int_{\hn}  |\nabla_{r}^{k} u|^2 \, \dv \geq \Lambda\int_{\hn} |\nabla_{r}^{l} u|^2 \, \dv
		\end{equation*}
		holds, for $u\in H^k(\hn)$, when $\Lambda > \big(\frac{N-1}{2}\big)^{2(k-l)}$.
	\end{theorem}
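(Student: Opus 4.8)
The plan is to reduce everything to the one-step inequalities $\int_{\hn}|\Delta_r v|^2\dv\ge\big(\tfrac{N-1}{2}\big)^2\int_{\hn}|\nabla_r v|^2\dv$ and $\int_{\hn}|\nabla_r v|^2\dv\ge\big(\tfrac{N-1}{2}\big)^2\int_{\hn}v^2\dv$, and then iterate. For the first of these, write $v=v(r,\sigma)$ in polar coordinates; by the integration-by-parts identity of Lemma~\ref{lemma_1} (applied on each ray, with weight $\psi^{N-1}=\sinh^{N-1}r$), $\int_{\hn}|\Delta_r v|^2\dv=\int_{\mathbb S^{N-1}}\int_0^\infty(\Delta_r v)^2\sinh^{N-1}r\,{\rm d}r\,{\rm d}\sigma$. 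Fixing $\sigma$ and setting $w=\partial_r v$, the quantity $\int_0^\infty(\partial_r w+(N-1)\coth r\,w)^2\sinh^{N-1}r\,{\rm d}r$ should be bounded below by $\big(\tfrac{N-1}{2}\big)^2\int_0^\infty w^2\sinh^{N-1}r\,{\rm d}r$; this is exactly a one-dimensional weighted Poincaré inequality, provable by expanding the square and integrating the cross term $2(N-1)\int_0^\infty w\,(\partial_r w)\coth r\,\sinh^{N-1}r\,{\rm d}r$ by parts, using $\tfrac{d}{dr}(\coth r\sinh^{N-1}r)=(N-2)\sinh^{N-2}r+(N-1)\coth^2 r\sinh^{N-1}r-\sinh^{N-1}r$ or, more cleanly, by the substitution that turns the weighted space into $L^2({\rm d}r)$ and reveals the ground-state spectral gap $\big(\tfrac{N-1}{2}\big)^2$ (this is the standard argument behind \eqref{r10} and \eqref{r21d}). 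The second one-step inequality is the classical radial Poincaré inequality on $\hn$, already implicit in \eqref{r10}.

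Next I would assemble these into \eqref{eq_main_th_1}. Given $k>l\ge0$, set $m=k-l$ and apply the one-step bounds $m$ times. Concretely, if $k$ and $l$ have the same parity, then $\nabla_r^k u=\Delta_r^{(k-l)/2}\nabla_r^l u$ when $l$ is chosen appropriately; more uniformly, one peels off one derivative at a time: $|\nabla_r^{k}u|^2$ integrated equals $\int|\Delta_r(\nabla_r^{k-2}u)|^2$ or $\int|\nabla_r(\nabla_r^{k-1}u)|^2$ depending on the parity of $k$, and in either case the appropriate one-step inequality lowers the order by one (for the odd case $\int|\nabla_r \Delta_r^{j}u|^2\ge(\tfrac{N-1}{2})^2\int|\Delta_r^{j}u|^2$ by the radial Poincaré inequality applied to $\Delta_r^j u$; for the even case $\int|\Delta_r^{j}u|^2\ge(\tfrac{N-1}{2})^2\int|\nabla_r\Delta_r^{j-1}u|^2$ by the Rellich-type step). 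Iterating $k-l$ times gives the factor $\big(\tfrac{N-1}{2}\big)^{2(k-l)}$; a density argument extends the conclusion from $C_c^\infty$ to $H^k(\hn)$.

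For optimality I would exhibit, for each $\varepsilon>0$, a test function $u_\varepsilon\in H^k(\hn)$ with $\int_{\hn}|\nabla_r^k u_\varepsilon|^2\dv<\big(\big(\tfrac{N-1}{2}\big)^{2(k-l)}+\varepsilon\big)\int_{\hn}|\nabla_r^l u_\varepsilon|^2\dv$. The natural candidates are radial functions modeled on the ground state of the shifted operator, i.e. $u(r)\sim e^{-\frac{N-1}{2}r}$ for large $r$, suitably truncated and regularized near $o$ and near infinity; this is precisely the minimizing-sequence construction attributed to \cite{NGO}. The key computational input is the integral expression for the volume of a geodesic ball, $\int_0^R\sinh^{N-1}r\,{\rm d}r$, which controls how the truncation errors scale. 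One checks that $\Delta_r e^{-\frac{N-1}{2}r}=\big(\tfrac{N-1}{2}\big)^2 e^{-\frac{N-1}{2}r}+(\text{lower-order in }\coth r-1)$, so each application of $\Delta_r$ to the test function multiplies it by $\big(\tfrac{N-1}{2}\big)^2$ up to exponentially small corrections, and the Rayleigh quotient for $\nabla_r^k/\nabla_r^l$ tends to $\big(\tfrac{N-1}{2}\big)^{2(k-l)}$ as the truncation parameters go to their limits.

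The main obstacle is the one-step Rellich-type inequality $\int_{\hn}|\Delta_r v|^2\dv\ge\big(\tfrac{N-1}{2}\big)^2\int_{\hn}|\nabla_r v|^2\dv$ together with the bookkeeping needed to be sure the iteration is legitimate for both parities of $k$ and $l$ (in particular that each intermediate object $\nabla_r^{j}u$ is an admissible test function in $\cchn$, so Lemma~\ref{lemma_1} applies and no boundary terms at $r=0$ or $r=\infty$ are lost). The sharpness half requires care in the truncation estimates: one must confirm that the correction terms coming from $\coth r\neq1$ near the origin and from the cutoffs are genuinely negligible against the main term in the Rayleigh quotient, which is where the explicit volume integral and the estimates of \cite{NGO} do the work.
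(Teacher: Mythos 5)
Your treatment of inequality \eqref{eq_main_th_1} is essentially the paper's: establish the two one-step inequalities and iterate according to parity, then pass to $H^k(\hn)$ by density. The only real difference there is how the Rellich-type step is obtained: the paper derives \eqref{use_2} by the duality trick $\int_{\hn}|\nabla_r u|^2\dv=-\int_{\hn}u\,\Delta_r u\dv\le\|u\|_2\|\Delta_r u\|_2$ combined with \eqref{use_1}, which is cleaner than your one-dimensional expansion of the square (where boundary terms at $r=0$ must be checked, and your displayed formula for $\tfrac{d}{dr}(\coth r\,\sinh^{N-1}r)$ is wrong as written; the correct identity is $\sinh^{N-3}r\,[(N-1)\cosh^{2}r-1]$), though your route can be made to work. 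Where you genuinely diverge is the optimality half, and here your attribution is inaccurate: the paper, following \cite{NGO}, does not use ground-state exponentials at all. It takes profiles $f_R(t)\asymp t^{-1/2}$ composed with the volume function $G(r)$, so that all relevant norms diverge only logarithmically in $R$; even $k$ is handled through the iterated functions $v_{R,m}$, for which $(-\Delta_r)^m u_R=f_R\circ G$, together with the decomposition of Lemma \ref{help_lemma}, and odd $k$ is settled by a short contradiction argument interpolating between the sharp even case and \eqref{use_2}. Your alternative — truncations of $e^{-\frac{N-1}{2}r}$, regularized near the pole, with each $\Delta_r$ acting as multiplication by $-\big(\frac{N-1}{2}\big)^2$ up to corrections of size $\coth r-1$ (note the sign, though only squares matter) — is a viable and arguably more elementary route, and it treats all parities of $k$ and $l$ uniformly; what it buys in transparency it pays for in the error estimates you only sketch: the cutoff and curvature-correction terms must be shown to be $o(R)$ in squared norm against the linearly divergent main term, and the regularization near $o$ is not optional, since $e^{-\frac{N-1}{2}r}$ is not smooth at the pole and the iterates $\Delta_r^m$ develop singularities of order $r^{1-2m}$ there, which fail to be square-integrable for larger $m$ in low dimensions. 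With those computations supplied, your construction would give a complete and legitimately different proof of sharpness.
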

	
	\begin{proof}
	We divide the proof into three steps. In the first step, we show the existence of the inequality \eqref{higher_r_poin} and in the rest of the two steps, we will tackle the optimality issues.
	
	{\bf Step 1.} Beginning with $u\in\cchn$, for the case $k=1$ and $l=0$, we already have from \eqref{r10}
	\begin{equation}\label{use_1}
	\int_{\hn} \rgt \ {\rm d}v_{\hn} \geq \left( \frac{N-1}{2} \right)^{2} \int_{\hn} u^2 \ {\rm d}v_{\hn}.
	\end{equation}
	
	Now we will arrive to the higher order Poincar\'e inequality in terms of radial derivatives by using Lemma \ref{lemma_1} and H\"older inequality step by step.
	\begin{align*}
	  &\int_{\hn} \rgt\dv   = \int_{\hn} (\nabla_r u) \cdot (\nabla_r u) \: {\rm d}v_{\hn} = -\int_{\hn} (\Delta_{r} u)\ u \: {\rm d}v_{\hn}=-\int_{\hn} (\nabla_{r}^{2} u )\ u \: {\rm d}v_{\hn}\\ & \leq \bigg(\int_{\hn}|\nabla_{r}^{2} u|^2\dv\bigg)^{\frac{1}{2}}\bigg(\int_{\hn}u^2\dv\bigg)^{\frac{1}{2}}\leq \frac{2}{(N-1)} \bigg(\int_{\hn}|\nabla_{r}^{2} u|^2\dv\bigg)^{\frac{1}{2}} \bigg(\int_{\hn}\rgt\dv\bigg)^{\frac{1}{2}}.
	\end{align*}
	
	By arranging these we deduce the case $k=2$ and $l=1$, which reads as
	\begin{equation}\label{use_2}
	\int_{\hn}|\nabla_{r}^{2} u|^2\dv\geq \bigg(\frac{N-1}{2}\bigg)^{2}\int_{\hn} \rgt\dv.
	\end{equation}
	
	Now we are ready to prove the higher order Poincar\'e inequality in terms of radial derivatives 
	 using induction. Suppose $k$ be an even integer with $k\geq 2$, then using \eqref{use_1} we get
	\begin{align*}
	& \int_{\hn}|\gr^k u|^2 \dv  = \int_{\hn}|\lr^{k/2} u|^2 \dv \\ & \leq \bigg(\frac{N-1}{2}\bigg)^{-2}\int_{\hn}|\gr \lr^{k/2} u|^2\dv  = \bigg(\frac{N-1}{2}\bigg)^{-2}\int_{\hn}|\gr^{k+1} u|^2 \dv.
	\end{align*}

	Assume $k$ be an odd integer with $k\geq 3$, then exploiting \eqref{use_2} we have
	\begin{align*}
	& \int_{\hn}|\gr^k u|^2 \ {\rm d}v_{\hn} = \int_{\hn}|\gr \lr^{\frac{k-1}{2}} u|^2 \ {\rm d}v_{\hn}  \\ & \leq \bigg(\frac{N-1}{2}\bigg)^{-2}\int_{\hn}|\gr^2 \lr^{\frac{k-1}{2}} u|^2 \ {\rm d}v_{\hn}  = \bigg(\frac{N-1}{2}\bigg)^{-2} \int_{\hn}|\gr^{k+1} u|^2 \ {\rm d}v_{\hn}.
	\end{align*}
	
	Finally use of \eqref{use_1} and \eqref{use_2} over and over yields the general case
	\begin{align*}
	& \int_{\hn} u^2\dv  \leq \bigg(\frac{N-1}{2}\bigg)^{-2}\int_{\hn}|\gr u|^2 \dv  \leq \bigg(\frac{N-1}{2}\bigg)^{-4}\int_{\hn}|\nabla_{r}^{2} u|^2\dv\\ & \leq \bigg(\frac{N-1}{2}\bigg)^{-6} \int_{\hn}|\nabla_{r}^{3} u|^2\dv \leq \cdots \leq \bigg(\frac{N-1}{2}\bigg)^{-2k} \int_{\hn}|\nabla_{r}^{k} u|^2\dv.
	\end{align*}
	
	Now if we commence with any non-negative integer $k \text{ and } l$ with $k>l$, then beginning with $\int_{\hn}|\gr^l u|^2\dv$ and repeatedly exploiting $\eqref{use_1}$ and $\eqref{use_2}$, we will get to $\int_{\hn}|\gr^k u|^2\dv$ with appropriate constant. At the end density arguments establish the result \eqref{higher_r_poin}.
	
	\medskip
	
	{\bf Step 2.} In the rest of the section we discuss the optimality issues. The argument runs similar like the proof of sharpness of constant in \cite{NGO}. Radial behaviour of the operator on a radial function is crucially used here. Let us write the \emph{integral representation} of the volume of a ball in hyperbolic space $\hn$ as follows 
	\begin{align}
	G(r):=N\omega_N\int_{0}^{r}(\sinh s)^{N-1} \: {\rm d}s,
	\end{align}
	where $\omega_N$ denotes the surface measure of unit sphere $\mathbb{S}^{N-1}$ in the underlying $N$-dimensional Euclidean space $\mathbb{R}^N$. Observe that this function $G(r):[0,\infty)\rightarrow[0,\infty)$ defines the hyperbolic volume of the ball with center at fixed pole $o$ and radius $r=\rho(o,x)$ i.e. $G(r):=\text{Vol}(B(o\ ; \ \rho(o,x)))$. Note that $G(r)$ is clearly continuous and strictly increasing function.  Next choose $F(r)$ as inverse of $G(r)$ and it's clear that $F(r)$ will be continuous, strictly increasing function and satisfying
	\begin{align}\label{help_eq}
	r=N\omega_N\int_{0}^{F(r)}(\sinh s)^{N-1}\: {\rm d}s \text{ for }r\geq 0.
	\end{align}
	
    In the above \eqref{help_eq} using $(\sinh s)\leq (\cosh s)$ and exploiting L'Hospital's rule we deduce for any non-negative real number there hold
	\begin{align*}
	(N-1)r\leq N\omega_N(\sinh F(r))^{N-1} \text{\  and \ } \underset{r\rightarrow\infty}{\lim} \frac{N\omega_N(\sinh F(r))^{N-1}}{(N-1)r}=1.
	\end{align*}
	
	So by the definition of limit we can say that for any $\epsilon>0$ there exist real number $R_0$ such that whenever $r\geq R_0$ there holds
	\begin{align}\label{help_eq_2}
	(N-1)r\leq N\omega_N(\sinh F(r))^{N-1}\leq (1+\epsilon)(N-1)r.
	\end{align}
	
	Now for $R>R_0$, let us define the radial function $f_R:[0,\infty)\rightarrow [0,\infty)$ as follows 
	\begin{equation*}
	f_R(r) :=
	\begin{dcases}
	R_0^{-\frac{1}{2}} & \text{if } r\in[0,R_0), \\ r^{-\frac{1}{2}} & \text{if } r\in [R_0,R),\\
	R^{-\frac{1}{2}}\big(2-\frac{r}{R}\big)  & \text{if } r\in[R,2R), \\
	0 & \text{if } r\in[2R,\infty). \\
	\end{dcases}
	\end{equation*}
	
	Along with this function we define two more sequences of radial functions $\{v_{R,i}\}_{i\geq 0}$ and $\{g_{R,i}\}_{i\geq 1}$ for $i\geq0$ in below : \\ 
	(i) first define $v_{R,0}(r):=f_R(r)$; \\
	(ii) next construct the maximal function $g_{R,i+1}:=\frac{1}{r}\int_{0}^{r} v_{R,i}(t) \ {\rm d}t$;\\
	(iii) finally we set $v_{R,i+1}:=\int_{r}^{\infty} \frac{t\, g_{R,i+1}(t)}{(N\omega_N (\sinh F(t))^{N-1})^2} \ {\rm d}t$.
	
	\medskip
	
	These two non-increasing functions $v_{R.i}$ and $g_{R,i}$ can be computed explicitly. We are skipping the details here. Without giving the proof, we are mentioning a key lemma which will crucially play an important role here. For details refer to \cite[Proposition 2.1]{NGO}. 
	
	\begin{lemma}\label{help_lemma}
		For any $\epsilon>0$ and $i\geq 1$. there exist radial functions $h_{R,i}$ and $w_{R,i}$ such that the  following holds\\
		(i) $v_{R,i}=h_{R,i}+w_{R,i};$\\
		(ii) there exist positive real number $C$ independent of $R$ such that $\int_0^{\infty} |w_{R,i}|^2 \ {\rm d}s \leq C;$\\
		(iii) and $\frac{1}{(1+\epsilon)^{2i}}\big(\frac{2}{N-1}\big)^{2i}f_R\leq h_{R,i}\leq \big(\frac{2}{N-1}\big)^{2i}f_R.$
	\end{lemma}

\medskip
	
	{\bf Step 3.} 
	Let us define the radial function in terms of $f_R$,
	\begin{align*}
	u_R(x):=f_R(\text{Vol}(B(o\ ; \ \rho(o,x)))).
	\end{align*} 
	Now we will compute $\int_{\hn}|u_R|^2\dv$ and $\int_{\hn}|\gr u_R|^2\dv$ separately and finiteness of those quantities will confirm, $u_R\in H^{1}(\hn)$. Shifting into polar coordinate and by exploiting change of variable it follows,
	\begin{align*}
	&\int_{\hn}|u_R(x)|^2\dv=\int_{0}^{\infty}\int_{\mathbb{S}^{N-1}}|u_R(r,\sigma)|^2 (\sinh r)^{N-1} \ {\rm d}\sigma \ {\rm d}r\\&=N\omega_N\int_{0}^{\infty} (f_R(G(r)))^2(\sinh r)^{N-1}{\rm d}r=\int_{0}^{\infty}(f_R(t))^2\ {\rm d}t = \ln \bigg(\frac{R}{R_0}\bigg) +\frac{4}{3}.
	\end{align*}
	
	Use of \eqref{help_eq_2} yields
	\begin{align*}
	& \int_{\hn}|\gr u_R(x)|^2 \ {\rm d}v_{\hn}=\int_{0}^{\infty}\int_{\mathbb{S}^{N-1}}\big|\frac{\partial}{\partial r}u_R(r,\sigma)\big|^2 (\sinh r)^{N-1} \ {\rm d}\sigma \ {\rm d}r\\ & =N\omega_N\int_{0}^{\infty} (f_R'(G(r)) G'(r) )^2(\sinh r)^{N-1} \ {\rm d}r\\&=(N\omega_N)^3\int_{0}^{\infty}(f_R'(G(r)))^2(\sinh r)^{3(N-1)} \ {\rm d}r=(N\omega_N)^2\int_{0}^{\infty} (f_R'(t))^2(\sinh F(t))^{2(N-1)}  \ {\rm d}t\\& \leq  (1+\epsilon)^2 (N-1)^2 \int_{0}^{\infty} (f_R'(t))^2 t^2 \ {\rm d}t = \frac{(N-1)^2}{4} (1+\epsilon)^2 \bigg[\ln \bigg(\frac{R}{R_0}\bigg)+\frac{28}{3}\bigg].
	\end{align*}
	
	Next considering the ratios of these two quantities we deduce 
	\begin{align*}
	\underset{u\in H^1(\hn)\setminus \{0\}}{\inf} \frac{\int_{\hn} |\gr u|^2 \dv }{\int_{\hn} |u|^2 \dv} \leq \underset{R\rightarrow \infty}{\liminf} \frac{\int_{\hn} |\gr u_R|^2 \dv }{\int_{\hn} |u_R|^2 \dv} \leq \bigg(\frac{N-1}{2} \bigg)^2(1+\epsilon)^2.
	\end{align*}
	
	So from the existence inequality \eqref{eq_main_th_1}, for the case $k=1,l=0$ and letting $\epsilon$ towards zero we can conclude $\big(\frac{N-1}{2} \big)^2$ is optimal constant. It's worth noticing that, by the help of \emph{Gauss's Lemma} one can quickly infer that $\big(\frac{N-1}{2} \big)^2$ is the best constant whenever $k=1$ and $l=0$ but this method will help us to comment about the optimality of the other higher index cases.
	
	Next we will deal with the case $k=2$ and $l=0$. In this context we define
	\begin{align*}
	u_R(x):=v_{R,1}(\text{Vol}(B(o\ ; \ \rho(o,x)))).
	\end{align*}
	
	Due to the radial behaviour of $u_R(x)$ and by the definition of $v_{R,1}$ we can write
	\begin{align*}
	-\Delta_{r}u_R=-\Delta_{\hn} u_R=f_R(\text{Vol}(B(o\ ; \ \rho(o,x)))).
	\end{align*}
	
	Now like earlier we have that
	\begin{align*}
	\int_{\hn} |\lr u_R|^2\dv= \int_{\hn}|f_R(\text{Vol}(B(o\ ; \ \rho(o,x))))|^2\dv=\ln \bigg(\frac{R}{R_0}\bigg) +\frac{4}{3}.
	\end{align*}
	
	By the help of the Lemma \ref{help_lemma}, polar coordinate transformation and change of variable we deduce 
	\begin{align*}
	\big(\int_{\hn} |u_R(x)|^2\dv \big)^{\frac{1}{2}} =  \big(\int_{0}^{\infty} |v_{R,1}(r)|^2 {\rm d}r \big)^{\frac{1}{2}}& \geq \big(\int_{0}^{\infty} |h_{R,1}(r)|^2 {\rm d}r \big)^{\frac{1}{2}}-\big(\int_{0}^{\infty} |w_{R,1}(r)|^2 {\rm d}r \big)^{\frac{1}{2}} \\& \geq \frac{4}{(1+\epsilon)^2(N-1)^2}\bigg[\ln \bigg(\frac{R}{R_0}\bigg) +\frac{4}{3}\bigg]^{\frac{1}{2}}-C.
	\end{align*}
	
	Again this implies
	\begin{align*}
	\underset{u\in H^2(\hn)\setminus \{0\}}{\inf} \frac{\int_{\hn} |\lr u|^2 \dv }{\int_{\hn} |u|^2 \dv} \leq \underset{R\rightarrow \infty}{\liminf} \frac{\int_{\hn} |\lr u_R|^2 \dv }{\int_{\hn} |u_R|^2 \dv} \leq \bigg(\frac{N-1}{2} \bigg)^4(1+\epsilon)^4.
	\end{align*}
	
	So the inequality $\eqref{eq_main_th_1}$ and letting $\epsilon$ towards zero we obtain $\big(\frac{N-1}{2} \big)^4$ is the best constant for the case $k=2$ and $l=0$. Recall that, exploiting \cite[Lemma 6.1]{EGR} we can tackle the optimality issue but once again this method will help to speak about the other sharpness cases. 
	
	Now consider the case $k=2m,l=0$ and we define 
	\begin{align*}
	u_R(x):= v_{R,m}(\text{Vol}(B(o\ ; \ \rho(o,x)))).
	\end{align*}
	
	Again due to the radial nature of the function it is easy to see that 
	\begin{align*}
	(-\Delta_{r})^m u_R(x)=(-\Delta_{\hn})^m u_R(x)=f_R(\text{Vol}(B(o\ ; \ \rho(o,x)))).
	\end{align*}
	
	Exploiting Lemma \ref{help_lemma} for the case $i=m$ and running similar argument like earlier we deduce the constant $\big(\frac{N-1}{2} \big)^{4m}$ is best possible.
	
	Now consider the case $k=2m+1,l=0$ and if possible assume there exist a constant $\Theta$ such that for $u\in\cchn$ there holds
	\begin{align*}
	\Theta \int_{\hn}|u|^2 \dv \leq \int_{\hn} |\gr(\lr^m u)|^2 \dv.
	\end{align*}
	But from earlier evaluation, we know the constant is sharp for the case $k=2m+2,l=0$. So using this and inequality \eqref{eq_main_th_1} for the case $k=2,l=1$ we can write 
	\begin{align*}
	\Theta \int_{\hn}|u|^2 \dv \leq \int_{\hn} |\gr(\lr^m u)|^2 \dv\leq \bigg(\frac{N-1}{2}\bigg)^{-2} \int_{\hn} |\lr^{m+1} u|^2 \dv.
	\end{align*}
	This implies
	\begin{align*}
	\Theta\bigg(\frac{N-1}{2}\bigg)^{2}\leq \bigg(\frac{N-1}{2}\bigg)^{2m+4} \implies \Theta\leq \bigg(\frac{N-1}{2}\bigg)^{2m+2}.
	\end{align*}
	This and density argument proves that, for the case $k=2m+1$ and $l=0$, whenever $u\in H^{2m+1}(\hn)$, the constant $\big(\frac{N-1}{2}\big)^{2m+2}$ is optimum. Hence by the same technique we can prove that, constant $\big(\frac{N-1}{2}\big)^{2(k-l)}$ is sharp for any  non-negative integer $k$ and $l$ with $k>l$, whenever $u\in H^k(\hn)$.
	\end{proof}
	
	From the result in Theorem \ref{higher_r_poin}, we can write
	\begin{align*}
    \underset{u\in H^k(\hn)\setminus \{0\}}{\inf} \frac{\int_{\hn} |\gr^k u|^2 \dv }{\int_{\hn}|\gr^l u|^2 \dv} =  \bigg(\frac{N-1}{2} \bigg)^{2(k-l)}
    \end{align*}
	and for this reason, always strict inequality holds in \eqref{eq_main_th_1}, except $u=0$. So this observation opens the account for improvement of \eqref{eq_main_th_1} and to support this we  proceed to the subsequent  sections.
	
	\medskip
	\section{Preparatory Results for Improvement via Hardy-type Remainder terms}\label{sect_lemma}
	
		In this section, we will mainly focus on some useful lemmas which will help to construct improvement of \eqref{eq_main_th_1}. We want to point out that \emph{Spherical decomposition} is the key method in the first part of this section. Over the years this method has become a remarkable tool in functional inequality. So before going further first briefly recall some useful facts about this method (for details refer to \cite{MSS}).
		
		Start with $u(x)=u(r,\sigma)\in \cchnm$ where $r\in[{0},\infty)$ and $\sigma\in \mathbb{S}^{N-1}$. From \cite[Ch. 4, Lemma 2.18]{ES}, we can write
	\begin{equation*}
		u(x):=u(r,\sigma)=\sum_{n=0}^{\infty}d_n(r)P_n(\sigma)
	\end{equation*}
	in the Hilbert space $L^2(\hn)$, where $\{ P_n \}$ is an orthonormal system of spherical harmonics in the space $L^2(\mathbb{S}^{N-1})$ and 
	\begin{equation*}
		d_n(r)=\int_{\mathbb{S}^{N-1}}u(r,\sigma)P_n(\sigma) \ {\rm d}\sigma\,.
	\end{equation*} 
	
	Moreover, $P_n$ is called spherical harmonics of order $n$ and this is the restriction to $\mathbb{S}^{N-1}$ of a homogeneous $n$ degree harmonic polynomial. The first application of this method will be the establishment of weighted Hardy inequality in terms of radial derivatives. For a similar type of result, one can refer to \cite[Theorem 5.1]{BGGP}.
	
	\begin{theorem}\label{th_1}
		Assume that $0\leq 2 \alpha < (N+3)$. For all $u\in \cchnm$, there holds 
		\begin{align}\label{eq_th_1}
		& \int_{\hn} \frac{\rgt}{r^{\alpha}} \ {\rm d}v_{\hn} \, \geq  \,
		\frac{(N-2-\alpha)^2}{4} \int_{\hn} \frac{u^2}{r^{\alpha+2}} \ {\rm d}v_{\hn}\\ \notag & +\frac{(N-1)}{2}\int_{\hn} \frac{u^2}{r^{\alpha}} \ {\rm d}v_{\hn} + \frac{(N-1)(N-3-2\alpha)}{4} \int_{\hn} g(r)\frac{u^2}{r^{\alpha}} \ {\rm d}v_{\hn},
		\end{align}
		where $g(r)=\frac{r\coth r -1}{r^2}$ is a positive function. Moreover, the constant $\frac{(N-2-\alpha)^2}{4}$ is optimal in the obvious sense.
	\end{theorem}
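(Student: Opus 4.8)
The plan is to use the spherical harmonic decomposition $u(r,\sigma)=\sum_{n=0}^\infty d_n(r)P_n(\sigma)$ and reduce the inequality to a one-dimensional weighted problem. Since $\nabla_r u = (\partial_r u, 0)$, only the radial derivative enters, so $|\nabla_r u|^2 = \sum_n |d_n'(r)|^2 |P_n(\sigma)|^2 + (\text{cross terms})$, and after integrating over $\mathbb{S}^{N-1}$ the orthonormality of $\{P_n\}$ kills the cross terms. Writing $\mathrm{d}v_{\hn} = (\sinh r)^{N-1}\,\mathrm{d}r\,\mathrm{d}\sigma$, the claim \eqref{eq_th_1} follows if I can prove, for each $n$ (indeed it suffices for $n=0$ since the angular part of the gradient is discarded), the one-dimensional inequality
\begin{equation*}
\int_0^\infty \frac{(d')^2}{r^\alpha}(\sinh r)^{N-1}\,\mathrm{d}r \geq \frac{(N-2-\alpha)^2}{4}\int_0^\infty \frac{d^2}{r^{\alpha+2}}(\sinh r)^{N-1}\,\mathrm{d}r + \frac{N-1}{2}\int_0^\infty \frac{d^2}{r^\alpha}(\sinh r)^{N-1}\,\mathrm{d}r + \frac{(N-1)(N-3-2\alpha)}{4}\int_0^\infty g(r)\frac{d^2}{r^\alpha}(\sinh r)^{N-1}\,\mathrm{d}r
\end{equation*}
for $d\in C_c^\infty(0,\infty)$, and then sum over $n$ with $d=d_n$.

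To establish this one-dimensional inequality I would use the standard substitution-and-completion-of-square technique. First substitute $d(r) = r^{(\alpha+2-N)/2} \phi(r)$ (or equivalently ground-state substitution with the optimal Hardy profile $r^{-(N-2-\alpha)/2}$ relative to the Euclidean-type weight), expand $(d')^2$, and integrate by parts the cross term $2 d' d \cdot (\text{stuff})$, throwing the boundary terms away because $d$ has compact support in $(0,\infty)$. The integration by parts will produce the Hardy constant $\frac{(N-2-\alpha)^2}{4}$ together with a nonnegative leftover $\int r^{\alpha+2-N}(\phi')^2 (\sinh r)^{N-1} r^{-\alpha}$-type term, plus correction terms coming from the difference between $(\sinh r)^{N-1}$ and $r^{N-1}$. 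Concretely, differentiating $(\sinh r)^{N-1}$ gives $(N-1)\coth r (\sinh r)^{N-1}$, and writing $\coth r = \frac{1}{r} + \big(\coth r - \frac1r\big)$, i.e. using $r\coth r - 1 = r^2 g(r)$, separates out exactly the term $\frac{N-1}{2}\int \frac{d^2}{r^\alpha}(\sinh r)^{N-1}$ (from the $\frac1r$ piece combining with the $r^{-\alpha}$ weight) and the $g(r)$ term with coefficient $\frac{(N-1)(N-3-2\alpha)}{4}$. I would track the algebra carefully to confirm these exact coefficients. The hypothesis $2\alpha < N+3$, i.e. $N-3-2\alpha > -N$ — more relevantly it guarantees certain combinations stay in the right range — will be needed to ensure convergence of the integrals near $r=0$ and, I expect, to guarantee that the discarded nonnegative square term is genuinely well-defined; the positivity of $g(r)$ (elementary, since $\tanh r < r$) is used to note the last remainder term has a definite sign when $N-3-2\alpha \geq 0$, though the stated inequality holds regardless as an identity-plus-square.

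For optimality of $\frac{(N-2-\alpha)^2}{4}$, I would argue in the usual way: test with functions concentrating near the origin, $u_\varepsilon(r) = r^{-(N-2-\alpha)/2 + \varepsilon} \chi(r)$ with a fixed cutoff $\chi$ supported near $0$, or a family interpolating this profile; as $\varepsilon \to 0^+$ the ratio $\int \frac{|\nabla_r u_\varepsilon|^2}{r^\alpha} \big/ \int \frac{u_\varepsilon^2}{r^{\alpha+2}}$ tends to $\frac{(N-2-\alpha)^2}{4}$ because near $r=0$ we have $\sinh r \sim r$, so the problem is asymptotically the Euclidean Hardy inequality with weight $r^{-\alpha}$, whose best constant is exactly $\frac{(N-2-\alpha)^2}{4}$; the lower-order terms (the two remaining integrals on the right) stay bounded relative to the leading ones in this limit, so they do not affect the value of the optimal constant in front of $\int \frac{u^2}{r^{\alpha+2}}$.

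The main obstacle I anticipate is purely computational bookkeeping: getting the correction coefficients $\frac{N-1}{2}$ and $\frac{(N-1)(N-3-2\alpha)}{4}$ to come out exactly right requires careful handling of how $\coth r$ splits and how the extra $r^{-\alpha}$ and $r^{-\alpha-2}$ weights interact with the integration by parts — a single sign or factor-of-two slip derails it. A secondary subtlety is justifying the decomposition step cleanly for all $u \in C_c^\infty(\hn\setminus\{o\})$ (convergence of the series in the relevant weighted norms, legitimacy of term-by-term integration), but since the final inequality only involves the radial derivative and the weights are harmless away from $r=0$, this is routine and can be handled by the standard density/Fatou argument, or simply by noting it suffices to prove the scalar inequality for $d_0$ and dropping the nonnegative contributions of $d_n$, $n\geq 1$.
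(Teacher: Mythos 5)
Your reduction via spherical harmonics and your optimality sketch are consistent with what the paper does (the paper decomposes after a substitution, and quotes the sharpness of $\frac{(N-2-\alpha)^2}{4}$ from Nguyen's work), but the core one-dimensional step, as you describe it, does not produce the stated inequality, and the problem is not bookkeeping. Carry out your own plan: with $d=r^{-\beta}\phi$, $\beta=\frac{N-2-\alpha}{2}$, expanding $(d')^2$ and integrating the cross term by parts against the weight $r^{-\alpha}(\sinh r)^{N-1}$ gives the exact identity
\begin{align*}
\int_0^\infty \frac{(d')^2}{r^{\alpha}}(\sinh r)^{N-1}\,{\rm d}r
&=\frac{(N-2-\alpha)^2}{4}\int_0^\infty \frac{d^2}{r^{\alpha+2}}(\sinh r)^{N-1}\,{\rm d}r
+\int_0^\infty r^{2-N}(\sinh r)^{N-1}(\phi')^2\,{\rm d}r\\
&\quad+\frac{(N-1)(N-2-\alpha)}{2}\int_0^\infty g(r)\,\frac{d^2}{r^{\alpha}}(\sinh r)^{N-1}\,{\rm d}r,
\end{align*}
because the exponent in the cross term is exactly $r^{1-N}$, so the $\frac1r$ part of $\coth r$ cancels identically against the derivative of the power; it does \emph{not} ``combine with the $r^{-\alpha}$ weight'' to give $\frac{N-1}{2}\int \frac{d^2}{r^\alpha}(\sinh r)^{N-1}$, and the $g$-coefficient that appears is $\frac{(N-1)(N-2-\alpha)}{2}$, not $\frac{(N-1)(N-3-2\alpha)}{4}$. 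Hence, if you discard the ``nonnegative leftover'' $\int r^{2-N}(\sinh r)^{N-1}(\phi')^2$, you prove only a Hardy inequality with a $g$-remainder and \emph{no} zeroth-order term. Since $g(r)\to 0$ as $r\to\infty$, the missing term $\frac{N-1}{2}\int_{\hn}\frac{u^2}{r^\alpha}\dv$ (the hyperbolic ``mass'' improvement, which is exactly what the later sections need) cannot be recovered from the $g$-term; it lives in the piece you threw away, and extracting it from $\int r^{2-N}(\sinh r)^{N-1}(\phi')^2$ is a weighted spectral-gap inequality of comparable difficulty (compare Lemma 3.4 of the paper, which for this very weight only yields the constant $\frac{N-1}{4}$).

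The paper avoids this by pulling out the \emph{full} weight rather than just the Euclidean power: it sets $v=(\sinh r)^{(N-1)/2}r^{-\alpha/2}u$, so that after spherical decomposition the one-dimensional measure is Lebesgue measure on $(0,\infty)$. The price is a potential containing $\frac{(N-1)^2}{4}\coth^2 r$; using $\coth^2 r=1+\sinh^{-2}r$, the one-dimensional Hardy inequality $\int (d_n')^2\,{\rm d}r\ge\frac14\int \frac{d_n^2}{r^2}\,{\rm d}r$, and the pointwise bound $\coth r\ge \frac1r$, the constant $\frac{(N-1)^2}{4}$ splits precisely into the three terms of \eqref{eq_th_1}: the Hardy constant $\frac{(\alpha+1)^2}{4}+\frac{(N-1)(N-3-2\alpha)}{4}=\frac{(N-2-\alpha)^2}{4}$, the zeroth-order term $\frac{N-1}{2}$, and the $g$-remainder with coefficient $\frac{(N-1)(N-3-2\alpha)}{4}$. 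So to repair your argument you must either switch to this hyperbolic ground-state substitution, or keep the $(\phi')^2$ term and prove an additional weighted Poincar\'e inequality for the weight $r^{2-N}(\sinh r)^{N-1}$; as written, your route yields a genuinely weaker statement. (Also, as a side remark, you cannot ``drop the contributions of $d_n$, $n\ge 1$'': the right-hand side contains them too, so the scalar inequality must be proved for every $d_n$ and summed — which your primary formulation does correctly.)
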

	
	\begin{proof}
		Start with $u\in \cchnm$ and we define
		\begin{equation*}
		v(x)=(\sinh r)^{(N-1)/2}u(x)r^{-\alpha/2} \text{ where } x=(r,\sigma)\in (0,\infty)\times \mathbb{S}^{N-1}.
		\end{equation*}
		
		An easy calculation gives
		\begin{equation*}
		\frac{1}{r^{\alpha/2}}\frac{\partial u }{\partial r}=\frac{1}{(\sinh r)^{\frac{(N-1)}{2}}}\bigg[\frac{\partial v }{\partial r} -\frac{(N-1)}{2}(\coth r)\:v+\frac{\alpha}{2}\frac{v}{r}\bigg].
		\end{equation*}
		
		After squaring the above term, we observe
		\begin{align*}
		\int_{\hn} \frac{\rgt}{r^{\alpha}} \ {\rm d}v_{\hn} &= \int_{\hn}\frac{1}{(\sinh r)^{(N-1)}}\bigg(\frac{\partial v}{\partial r}\bigg)^2 \ {\rm d}v_{\hn}+\frac{(N-1)^2}{4}\int_{\hn}\frac{1}{(\sinh r)^{(N-1)}}(\coth r)^2\: v^2 \ {\rm d}v_{\hn}\\&+\frac{\alpha^2}{4}\int_{\hn}\frac{1}{(\sinh r) ^{(N-1)}}\frac{ v^2}{ r^2} \ {\rm d}v_{\hn}-(N-1)\int_{\hn}\frac{1}{(\sinh r)^{(N-1)}}\frac{\partial v }{\partial r} (\coth r)\:v \ {\rm d}v_{\hn}\\&+\alpha\int_{M}\frac{1}{(\sinh r)^{(N-1)}}\frac{\partial v }{\partial r} \frac{v}{r} \ {\rm d}v_{\hn}-\frac{\alpha(N-1)}{2}\int_{M}\frac{1}{(\sinh r)^{(N-1)}} (\coth r)\frac{v^2}{r} \ {\rm d}v_{\hn}.
		\end{align*}   
		
		Now expanding $v$ in spherical harmonics $v(x):=v(r,\sigma)=\sum_{n=0}^{\infty}d_n(r)P_n(\sigma)$, we obtain
		
		\begin{align*}
		\int_{\hn} \frac{\rgt}{r^{\alpha}} \ {\rm d}v_{\hn}&=\sum_{n=0}^{\infty}\bigg[\int_{0}^{\infty}{d'_n}^2 \ {\rm d}r+\frac{(N-1)^2}{4}\int_{0}^{\infty}(\coth r)^2 \ d_n^2 \ {\rm d}r+\frac{\alpha^2}{4}\int_{0}^{\infty}\frac{d_n^2}{r^2} \ {\rm d}r\\ &-(N-1)\int_{0}^{\infty} (\coth r)\ d'_n d_n  \ {\rm d}r
		+\alpha\int_{0}^{\infty}\frac{d'_n d_n}{r} \ {\rm d}r-\frac{\alpha(N-1)}{2}\int_{0}^{\infty}(\coth r)\frac{d_n^2}{r} \ {\rm d}r\bigg]\\
		&
		=\sum_{n=0}^{\infty}\bigg[\int_{0}^{\infty}{d'_n}^2 \ {\rm d}r+\frac{(N-1)^2}{4}\int_{0}^{\infty}(\coth r)^2 \ d_n^2 \ {\rm d}r+\frac{\alpha^2}{4}\int_{0}^{\infty}\frac{d_n^2}{r^2} \ {\rm d}r\\ &-\frac{(N-1)}{2}\int_{0}^{\infty} \frac{d_n^2}{(\sinh r)^2}  \ {\rm d}r
		+\frac{\alpha}{2}\int_{0}^{\infty}\frac{d_n^2}{r^2} \ {\rm d}r-\frac{\alpha(N-1)}{2}\int_{0}^{\infty}(\coth r)\frac{d_n^2}{r} \ {\rm d}r\bigg].
		\end{align*}
		
		Observing that
		\begin{align*}
		&\frac{(N-1)^2}{4}\int_{0}^{\infty}(\coth r)^2 \ d_n^2 \ {\rm d}r -\frac{(N-1)}{2}\int_{0}^{\infty} \frac{d_n^2}{(\sinh r)^2}  \ {\rm d}r\\&=\frac{(N-1)^2}{4}\int_{0}^{\infty}  d_n^2 \ {\rm d}r + \frac{(N-1)(N-3)}{4}\int_{0}^{\infty} \frac{d_n^2}{(\sinh r)^2}  \ {\rm d}r, 
		\end{align*}
		and using $1$-dimensional Hardy inequality and $(\coth r)\geq 1/r$, we infer
		\begin{align*}
		&\int_{\hn} \frac{\rgt}{r^{\alpha}} \ {\rm d}v_{\hn}  \geq
		\sum_{n=0}^{\infty}\bigg[\frac{(\alpha+1)^2}{4}\int_{0}^{\infty}\frac{d_n^2}{r^2}\ {\rm d}r +\frac{(N-1)^2}{4}\int_{0}^{\infty}
		d_n^2 \ {\rm d}r\notag \\ &+ \frac{(N-1)(N-3)}{4}\int_{0}^{\infty} \frac{d_n^2}{(\sinh r)^2} \ {\rm d}r-\frac{\alpha(N-1)}{2}\int_{0}^{\infty}(\coth r)\frac{d_n^2}{r}  \ {\rm d}r\bigg]\\ 
		&=\sum_{n=0}^{\infty}\bigg[\frac{(\alpha+1)^2}{4}\int_{0}^{\infty}\frac{d_n^2}{r^{2}}\ {\rm d}r +\bigg[\frac{(N-1)^2}{4}-\frac{(N-1)(N-3)}{4}\bigg]\int_{0}^{\infty} d_n^2 \ {\rm d}r\\
		&+ \frac{(N-1)(N-3)}{4}\int_{0}^{\infty} (\coth r)^2d_n^2 \ {\rm d}r -\frac{\alpha(N-1)}{2}\int_{0}^{\infty}(\coth r)\frac{d_n^2}{r} \ {\rm d}r\bigg]\\
		&\geq \sum_{n=0}^{\infty}\bigg[ \frac{(\alpha+1)^2}{4}\int_{0}^{\infty}\frac{d_n^2}{r^{2}} {\rm d}r+\frac{(N-1)}{2}\int_{0}^{\infty} d_n^2  {\rm d}r+\bigg[\frac{(N-1)(N-3)}{4}-\frac{\alpha(N-1)}{2}\bigg]\int_{0}^{\infty}(\coth r)\frac{d_n^2}{r}  {\rm d}r\bigg]\\
		&=\sum_{n=0}^{\infty}\bigg[\frac{(N-2-\alpha)^2}{4} \int_{0}^{\infty}\frac{d_n^2}{r^{2}}\ {\rm d}r+\frac{(N-1)}{2}\int_{0}^{\infty} d_n^2 \ {\rm d}r+\frac{(N-1)(N-3-2\alpha)}{4}\int_{0}^{\infty} g(r)\ d_n^2 \ {\rm d}r\bigg].
		\end{align*}
		
		Finally writing all the above terms w.r.t. $u$ we establish our desired Theorem \ref{th_1}. Optimality of the constant $\frac{(N-2-\alpha)^2}{4}$ was already established in \cite[Theorem 3.1]{VHN}.
	\end{proof}

\begin{remark}
	The coefficient in front of the last term in \eqref{eq_th_1} is negative whenever $N-3<2\alpha$. Note that $g(r)\leq 1/3$ for every $r> 0$, we deduce 
	\begin{align*}
	\frac{(N-1)}{2}+\frac{(N-1)(N-3-2\alpha)}{12}=\frac{(N-1)(N+3-2\alpha)}{12}>0
	\end{align*}
	for $N+3>2\alpha$. Hence, the initial restriction of dimension in \eqref{eq_th_1} is justified. Also note that exploiting Gauss's Lemma in \eqref{eq_th_1} we can obtain different version of weighted Hardy inequality. Another implication of \eqref{eq_th_1} is an  immediate improvement of \cite[Theorem 3.1]{VHN} for the case $p=2$.
\end{remark}

By granting, $N-3\geq 2\alpha$ in \eqref{eq_th_1}, one has the following corollary:
\begin{corollary}\label{cor_1}
	Let $0\leq 2\alpha \leq N-3$. Then, for all $u\in\cchnm$, there holds 
	\begin{align}\label{eq_cor_1}
	& \int_{\hn} \frac{\rgt}{r^{\alpha}} \ {\rm d}v_{\hn} \, \geq  \,
	\frac{(N-2-\alpha)^2}{4} \int_{\hn} \frac{u^2}{r^{\alpha+2}} \ {\rm d}v_{\hn} +\frac{(N-1)}{2}\int_{\hn} \frac{u^2}{r^{\alpha}} \ {\rm d}v_{\hn}.
	\end{align}
Furthermore, the constant $\frac{(N-2-\alpha)^2}{4}$ is sharp in the obvious sense.
\end{corollary}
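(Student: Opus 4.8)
The plan is to deduce Corollary~\ref{cor_1} directly from Theorem~\ref{th_1}: the only difference between \eqref{eq_cor_1} and \eqref{eq_th_1} is the absence of the term $\frac{(N-1)(N-3-2\alpha)}{4}\int_{\hn} g(r)\frac{u^2}{r^{\alpha}}\dv$, and under the strengthened hypothesis $0\le 2\alpha\le N-3$ this term is nonnegative, hence may simply be discarded. Note first that the hypothesis $0\le 2\alpha\le N-3$ forces $N\ge 3$ and in particular implies $0\le 2\alpha< N+3$, so Theorem~\ref{th_1} is applicable to every $u\in\cchnm$.

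Concretely, I would record that $g(r)=\frac{r\coth r-1}{r^2}>0$ for every $r>0$ — this is already asserted in the statement of Theorem~\ref{th_1}, and follows from the elementary inequality $\tanh r<r$ on $(0,\infty)$, i.e. $r\coth r>1$. Next, since $N-1>0$, the prefactor $\frac{(N-1)(N-3-2\alpha)}{4}$ is $\ge 0$ exactly when $N-3-2\alpha\ge 0$, which is precisely the hypothesis of the corollary. Consequently the last integral on the right-hand side of \eqref{eq_th_1} is nonnegative, and dropping it turns \eqref{eq_th_1} into \eqref{eq_cor_1}. This step carries no real difficulty; it is a bookkeeping check on the sign of one coefficient on the whole range $r\in(0,\infty)$.

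For the sharpness of $\frac{(N-2-\alpha)^2}{4}$ in \eqref{eq_cor_1} (in the obvious sense), I would argue as follows. Since $\frac{(N-1)}{2}\int_{\hn}\frac{u^2}{r^{\alpha}}\dv\ge 0$, inequality \eqref{eq_cor_1} implies in particular the pure weighted radial Hardy inequality $\int_{\hn}\frac{\rgt}{r^{\alpha}}\dv\ge \frac{(N-2-\alpha)^2}{4}\int_{\hn}\frac{u^2}{r^{\alpha+2}}\dv$, whose constant is known to be optimal by \cite[Theorem 3.1]{VHN} — the same reference invoked for the optimality in Theorem~\ref{th_1}. Hence the constant in \eqref{eq_cor_1} cannot be enlarged either. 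Equivalently, one may feed into \eqref{eq_cor_1} the extremal sequence of \cite{VHN}, which concentrates at the pole $o$: near $o$ one has $r^{-\alpha-2}\gg r^{-\alpha}$, so $\int_{\hn} u^2 r^{-\alpha}\dv$ is of lower order compared with $\int_{\hn} u^2 r^{-\alpha-2}\dv$, leaving $\frac{(N-2-\alpha)^2}{4}$ as the limiting value of the Rayleigh-type quotient $\int_{\hn}|\nabla_r u|^2 r^{-\alpha}\dv\big/\int_{\hn} u^2 r^{-\alpha-2}\dv$.

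Since the whole argument is essentially a one-line reduction to Theorem~\ref{th_1}, there is no genuine obstacle here; the only point requiring a moment's thought is verifying that the discarded term has a definite sign, which is handled by the positivity of $g$ together with the range $0\le 2\alpha\le N-3$.
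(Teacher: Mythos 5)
Your argument is correct and is exactly how the paper obtains Corollary~\ref{cor_1}: under $0\le 2\alpha\le N-3$ the coefficient $\frac{(N-1)(N-3-2\alpha)}{4}$ is nonnegative and $g(r)>0$, so the last term of \eqref{eq_th_1} can simply be dropped, while sharpness of $\frac{(N-2-\alpha)^2}{4}$ is inherited from the same reference \cite[Theorem 3.1]{VHN} used for Theorem~\ref{th_1}. Nothing further is needed.
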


Now we will develop weighted Rellich type inequality with Hardy type remainder terms which is an analogous result of \cite[Theorem 5.2]{BGGP}. Before going into detail first recall another important lemma.

\begin{lemma}\label{lemma_2}
	For all $u\in \cchn$, there holds $\lr (u^2)=2u(\lr u) + 2\rgt$.
\end{lemma}
\begin{proof}
	This follows from it's own definition and by simple calculation 
	\begin{align*}
	\lr (u^2)&=\frac{\partial^2 u^2}{\partial r^2}+(N-1)(\coth r) \frac{\partial u^2}{\partial r}=2\frac{\partial}{\partial r}\big(u\frac{\partial u}{\partial r}\big) +2(N-1)(\coth r) \frac{\partial u}{\partial r} u\\
	&=2u\frac{\partial^2 u}{\partial r^2}+2\big(\frac{\partial u}{\partial r}\big)^2+2u(N-1)(\coth r) \frac{\partial u}{\partial r}=2u(\lr u) + 2\rgt.
	\end{align*}\end{proof}

Exploiting Lemma \ref{lemma_2} and Theorem \ref{eq_th_1}, we state a weighted Rellich inequality.
\begin{theorem}\label{th_2}
    Let $\alpha$ be a positive number and $N>\max\{\alpha+2,2\alpha-3\}$. For all $u\in\cchnm$, there holds 
	\begin{align}\label{eq_th_2}
	\int_{\hn} \frac{|\lr u|^2}{r^{\alpha-2}}\dv& \geq \frac{(N-2-\alpha)^2(N-2+\alpha)^2}{16}\int_{\hn} \frac{u^2}{r^{\alpha +2}}\dv \\& \notag + \frac{(N-2-\alpha)(N-2+\alpha)(N-1)}{4}\int_{\hn} \frac{u^2}{r^{\alpha}}\dv \\ & + \notag  \frac{(N-1)(N-3-2\alpha)(N-2-\alpha)(N-2+\alpha)}{8} \int_{\hn} g(r)\frac{u^2}{r^{\alpha}} \, {\rm d}v_{\hn},
	\end{align}
	where $g(r)$ is as defined in \eqref{eq_th_1}. Moreover, the constant $\frac{(N-2-\alpha)^2(N-2+\alpha)^2}{16}$ is sharp in the obvious sense.
\end{theorem}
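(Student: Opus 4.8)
The plan is to reduce the weighted Rellich inequality \eqref{eq_th_2} to the weighted Hardy inequality \eqref{eq_th_1} (equivalently Theorem~\ref{th_1}) by the standard trick of expanding $\lr(u^2)$. First I would take $u\in\cchnm$ and write $\int_{\hn} \frac{|\lr u|^2}{r^{\alpha-2}}\dv$; the goal is to bound this below in terms of $\int_{\hn} \frac{\rgt}{r^{\beta}}\dv$ for a suitable shifted exponent $\beta$, and then invoke \eqref{eq_th_1}. The natural device is to multiply $\lr u$ by the weight $u/r^{\alpha-2}$ and integrate: by Lemma~\ref{lemma_1} (integration by parts for $\lr$) together with Lemma~\ref{lemma_2}, one has $\lr(u^2)=2u\lr u+2\rgt$, so that
\begin{align*}
\int_{\hn} \frac{u\,\lr u}{r^{\alpha-2}}\dv = \tfrac12\int_{\hn}\frac{\lr(u^2)}{r^{\alpha-2}}\dv - \int_{\hn}\frac{\rgt}{r^{\alpha-2}}\dv.
\end{align*}
The first term on the right can be transferred onto the weight $r^{-(\alpha-2)}$ by applying Lemma~\ref{lemma_1} (with $g$ replaced by the radial function $r^{-(\alpha-2)}$, after the usual density/cutoff justification near the pole), reducing it to an integral of $u^2$ times $\lr(r^{-(\alpha-2)})$, which is an explicit rational expression in $r$ and $\coth r$.

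Next I would combine this with the Cauchy--Schwarz (or weighted Young) inequality
\begin{align*}
\Big|\int_{\hn}\frac{u\,\lr u}{r^{\alpha-2}}\dv\Big| \le \Big(\int_{\hn}\frac{|\lr u|^2}{r^{\alpha-2}}\dv\Big)^{1/2}\Big(\int_{\hn}\frac{u^2}{r^{\alpha-2}}\dv\Big)^{1/2},
\end{align*}
to isolate $\int_{\hn}\frac{|\lr u|^2}{r^{\alpha-2}}\dv$. However, the cleaner route — and the one suggested by the statement's constant factoring as $\frac{(N-2-\alpha)^2}{4}\cdot\frac{(N-2+\alpha)^2}{4}$ — is to apply Corollary~\ref{cor_1}/Theorem~\ref{th_1} \emph{twice} in succession: once with exponent $\alpha$ to pass from $\int\frac{\rgt}{r^{\alpha-2}}$ down to $\int\frac{u^2}{r^{\alpha}}$ with constant $\frac{(N-2-\alpha)^2}{4}$ — wait, the bookkeeping must be done carefully, because the two factors $(N-2-\alpha)$ and $(N-2+\alpha)$ come from \eqref{eq_th_1} applied at two different shifts of the exponent. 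Concretely: apply \eqref{eq_th_1} with $\alpha$ replaced by $\alpha-2$ to get $\int\frac{\rgt}{r^{\alpha-2}}\ge \frac{(N-\alpha)^2}{4}\int\frac{u^2}{r^{\alpha}}+\dots$; this produces the factor $(N-2-(\alpha-2))=(N-\alpha)$, not $(N-2+\alpha)$. So instead one should work with $w=\lr u$ and apply \eqref{eq_th_1} to $w$; the appearance of $(N-2+\alpha)$ strongly suggests that after expanding $\lr u=u''+(N-1)\coth r\,u'$ one argues at the level of the spherical-harmonics decomposition exactly as in the proof of Theorem~\ref{th_1}, tracking the one-dimensional Rellich inequality $\int_0^\infty \frac{|d_n''|^2}{r^{\alpha-2}}\,dr\ge \frac{(N-2-\alpha)^2(N-2+\alpha)^2}{16}\int_0^\infty\frac{d_n^2}{r^{\alpha+2}}\,dr$ together with the cross terms coming from the $(N-1)\coth r$ piece.

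Given that, my actual plan is: (i) reproduce the substitution $v(x)=(\sinh r)^{(N-1)/2}u(x)r^{-(\alpha-2)/2}$ used in the proof of Theorem~\ref{th_1}, compute $\frac{1}{r^{(\alpha-2)/2}}\lr u$ in terms of $v$ and its derivatives (this is a second-order analogue of the first-order identity there, so it will involve $v''$, $v'$, $v$ with coefficients built from $\coth r$, $1/r$); (ii) square, integrate, and expand $v$ in spherical harmonics $v=\sum_n d_n(r)P_n(\sigma)$, reducing everything to a sum of one-dimensional integrals in $d_n$; (iii) apply the one-dimensional Rellich inequality and the one-dimensional Hardy inequality to the leading terms, discard the manifestly nonnegative terms, and carefully retain the terms that integrate (via $1$-dimensional integration by parts) to the $\frac{(N-1)}{2}\int\frac{u^2}{r^\alpha}$ and $g(r)$ contributions, using $\coth r\ge 1/r$ and the identity $\coth^2 r=1+1/\sinh^2 r$ exactly as in Theorem~\ref{th_1}; (iv) translate back from $v$ and $d_n$ to $u$. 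The main obstacle I anticipate is (iii): correctly bookkeeping the large number of cross terms produced by squaring a three-term expression and by integrating by parts the products $d_n' d_n$, $d_n'' d_n$, $(\coth r) d_n^2$, etc., so that the error terms assemble precisely into the stated coefficients (and in particular so that the non-leading coefficients have the claimed signs under the hypothesis $N>\max\{\alpha+2,2\alpha-3\}$). A secondary technical point is justifying the integration-by-parts steps near $r=0$ for $u\in\cchnm$, which is routine since $u$ vanishes near the pole. Sharpness of $\frac{(N-2-\alpha)^2(N-2+\alpha)^2}{16}$ should follow from sharpness of the one-dimensional Rellich constant (or by citing the corresponding result in \cite{VHN}), using a radial minimizing sequence concentrated near $r=0$.
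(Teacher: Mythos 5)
Your final plan is not the paper's route, and as written it contains a genuine gap: the entire content of the theorem is concentrated in your step (iii), which you leave as an acknowledged to-do, and the heuristic guiding it is incorrect. There is no one-dimensional inequality of the form $\int_0^\infty |d_n''|^2 r^{2-\alpha}\,{\rm d}r\ge \frac{(N-2-\alpha)^2(N-2+\alpha)^2}{16}\int_0^\infty d_n^2\, r^{-\alpha-2}\,{\rm d}r$: after the substitution the measure is Lebesgue measure on $(0,\infty)$, so a purely one-dimensional Rellich constant cannot carry the dimension $N$; the factor $(N-2+\alpha)^2$ arises by an entirely different mechanism, and nothing in your sketch shows that the cross terms produced by squaring $\lr u$ in the $v$-variables assemble into the three stated coefficients, nor why the hypothesis $N>\max\{\alpha+2,2\alpha-3\}$ is the right one. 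Your earlier Cauchy--Schwarz pairing against $\int u^2/r^{\alpha-2}$ also aims at the wrong power of $r$ (the leading Rellich term lives at $r^{-\alpha-2}$), and while you correctly observed that applying \eqref{eq_th_1} with exponent $\alpha-2$ produces $(N-\alpha)$ rather than $(N-2+\alpha)$, you then abandoned the $\lr(u^2)$ route instead of repairing it.

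The paper's proof is precisely the route you abandoned, with three choices you missed. One starts from the pointwise bound $-\lr r^{-\alpha}=-\Delta_{\hn} r^{-\alpha}\ge \alpha(N-2-\alpha)\,r^{-\alpha-2}$ (valid by $\coth r\ge 1/r$; here $N>\alpha+2$ is used), multiplies by $u^2$, integrates by parts via Lemma \ref{lemma_1} and uses Lemma \ref{lemma_2}, and then splits the resulting term $-2\int_{\hn} u\,\lr u\, r^{-\alpha}\dv$ by Young's inequality as the product $\frac{u}{r^{\alpha/2+1}}\cdot\frac{\lr u}{r^{\alpha/2-1}}$ with a parameter $\epsilon>0$, obtaining
\begin{align*}
\int_{\hn}\frac{\rlt}{r^{\alpha-2}}\dv \ \ge\ 2\epsilon\int_{\hn}\frac{\rgt}{r^{\alpha}}\dv + \big[\epsilon\alpha(N-2-\alpha)-\epsilon^2\big]\int_{\hn}\frac{u^2}{r^{\alpha+2}}\dv .
\end{align*}
One then applies Theorem \ref{th_1} at exponent $\alpha$ (not $\alpha-2$; this is where $N>2\alpha-3$ enters), which turns the gradient term into $\epsilon\frac{(N-2-\alpha)^2}{2}\int_{\hn}\frac{u^2}{r^{\alpha+2}}\dv+\epsilon(N-1)\int_{\hn}\frac{u^2}{r^{\alpha}}\dv+\epsilon\frac{(N-1)(N-3-2\alpha)}{2}\int_{\hn} g(r)\frac{u^2}{r^{\alpha}}\dv$, and finally maximizes the coefficient of $\int_{\hn} u^2/r^{\alpha+2}\dv$ over $\epsilon$; the maximizer is $\epsilon=\frac{(N-2-\alpha)(N-2+\alpha)}{4}$, which is exactly where the factor $(N-2+\alpha)$ comes from and which reproduces all three constants in \eqref{eq_th_2} at once. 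Sharpness is then quoted from \cite{VHN}, as you also propose. Unless you actually carry out the heavy bookkeeping of your step (iii) — and correct the false one-dimensional claim that motivates it — your proposal does not yet prove the theorem.
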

\begin{proof}
	This proof mainly relies on the inequality \eqref{eq_th_1}. Notice that, whenever $\alpha>0$, there holds
	\begin{align*}
	-\lr \frac{1}{r^\alpha} = - \Delta_{\hn} \frac{1}{r^\alpha}\geq \frac{\alpha(N-2-\alpha)}{r^{\alpha+2}} \text{ for } r>0.
	\end{align*}
	
	First we multiply above by $u^2$ and after that performing by parts formula, Lemma \ref{lemma_2} and Young's inequalty with $\epsilon>0$, we deduce
	\begin{align*}
	 \int_{\hn} \frac{|\lr u|^2}{r^{\alpha-2}}\dv& \geq 2\epsilon\int_{\hn} \frac{|\gr u|^2}{r^\alpha}\dv +  \big[\epsilon\alpha(N-2-\alpha)-\epsilon^2\big]\int_{\hn} \frac{u^2}{r^{\alpha+2}}\dv\\&   \geq \big[\epsilon\alpha(N-2-\alpha)-\epsilon^2+\epsilon\frac{(N-2-\alpha
		)^2}{2}\big]\int_{\hn} \frac{u^2}{r^{\alpha+2}}\dv+\epsilon(N-1)\int_{\hn} \frac{u^2}{r^{\alpha}}\dv \\& + \epsilon \frac{(N-1)(N-3-2\alpha)}{2} \int_{\hn} g(r)\frac{u^2}{r^{\alpha}} \, {\rm d}v_{\hn}.
	\end{align*}
	
	Now the coefficient in front of $\int_{\hn} u^2/r^{\alpha+2}\dv$ will be maximum when $\epsilon = \frac{(N-2-\alpha)(N-2+\alpha)}{4}$ and substituting this we obtain our required result. Optimality issue of the constant $\frac{(N-2-\alpha)^2(N-2+\alpha)^2}{16}$ was already tackled in \cite[Theorem 4.3]{VHN}.
\end{proof}
\begin{remark}
Exploiting \cite[Lemma 6.1]{EGR} in \eqref{eq_th_2} we can deduce another version of weighted Rellich inequality with Hardy type remainder terms. Also it is important to notice that, \eqref{th_2} gives one more version of immediate improvement of \cite[Theorem 4.3]{VHN} for the case $p=2$.  
\end{remark}

Collecting the conditions in Theorem \ref{th_2} and $2 \alpha \leq N-3 $, one has the following corollary:
	\begin{corollary}\label{cor_2}
		Let $0\leq 2 \alpha\leq N-3 $. Then for all $u\in\cchnm$, there holds 
		\begin{align}\label{eq_cor_2}
		 \int_{\hn} \frac{|\lr u|^2}{r^{\alpha-2}} \ {\rm d}v_{\hn} & \geq \frac{(N-2-\alpha)^2(N-2+\alpha)^2}{16}\int_{\hn} \frac{u^2}{r^{\alpha +2}}\ {\rm d}v_{\hn} \\& \notag + \frac{(N-2-\alpha)(N-2+\alpha)(N-1)}{4}\int_{\hn} \frac{u^2}{r^{\alpha}} \ {\rm d}v_{\hn}.
		\end{align}
		Moreover, the constant $\frac{(N-2-\alpha)^2(N-2+\alpha)^2}{16}$ is sharp in the obvious sense.
	\end{corollary}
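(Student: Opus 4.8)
The plan is to read \eqref{eq_cor_2} off directly from the weighted Rellich inequality \eqref{eq_th_2} of Theorem~\ref{th_2} by discarding the last term on its right-hand side, so essentially nothing beyond a sign check is required. First I would verify that the hypotheses of Theorem~\ref{th_2} are in force: for $\alpha>0$ the assumption $2\alpha\le N-3$ reads $N\ge 2\alpha+3$, and since $2\alpha+3>\alpha+2$ (as $\alpha>-1$) and $2\alpha+3>2\alpha-3$, one gets $N>\max\{\alpha+2,2\alpha-3\}$, so \eqref{eq_th_2} applies. Next, recall that $g(r)=\frac{r\coth r-1}{r^2}>0$ for every $r>0$ (as noted in Theorem~\ref{th_1}); and under $2\alpha\le N-3$ each factor satisfies $N-3-2\alpha\ge 0$, $N-2-\alpha>0$, $N-2+\alpha>0$, $N-1>0$, so the coefficient
\[
\frac{(N-1)(N-3-2\alpha)(N-2-\alpha)(N-2+\alpha)}{8}
\]
of the $g$-term in \eqref{eq_th_2} is nonnegative. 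Hence that whole term may simply be dropped, which yields \eqref{eq_cor_2} for every $\alpha>0$ with $2\alpha\le N-3$; note that at the boundary value $2\alpha=N-3$ this coefficient is in fact exactly $0$, so nothing is lost there.

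The only value needing a separate word is the endpoint $\alpha=0$ (the only admissible one when $N=3$), since Theorem~\ref{th_2} was stated with $\alpha>0$. There I would re-run the scheme of the proof of Theorem~\ref{th_2}, now starting from $-\lr 1=0$ in place of the differential inequality for $-\lr r^{-\alpha}$: an integration by parts (Lemma~\ref{lemma_1}), Lemma~\ref{lemma_2}, and Young's inequality with parameter $\epsilon>0$ give
\[
\int_{\hn}\rlt\, r^{2}\dv \;\ge\; 2\epsilon\int_{\hn}\rgt\dv \;-\;\epsilon^{2}\int_{\hn}\frac{u^2}{r^{2}}\dv .
\]
Inserting Corollary~\ref{cor_1} with $\alpha=0$ (valid since $0\le 0\le N-3$) to bound $\int_{\hn}\rgt\dv$ from below, and then optimizing the coefficient of $\int_{\hn} u^2/r^2\dv$ over $\epsilon$ — the optimal choice being $\epsilon=(N-2)^2/4$ — reproduces exactly \eqref{eq_cor_2} with $\alpha=0$. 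Alternatively, when $N>3$ one may obtain the endpoint by letting $\alpha\to 0^+$ in \eqref{eq_cor_2}: the constants are continuous in $\alpha$, and since every $u\in\cchnm$ is supported in a set where $r$ is bounded away from $0$ and from $\infty$, the integrals converge (even uniformly in $\alpha$).

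Finally, sharpness of the constant $\frac{(N-2-\alpha)^2(N-2+\alpha)^2}{16}$ costs nothing extra: dropping from \eqref{eq_cor_2} the remaining nonnegative term $\frac{(N-2-\alpha)(N-2+\alpha)(N-1)}{4}\int_{\hn}\frac{u^2}{r^{\alpha}}\dv$ leaves the plain weighted Rellich inequality, whose constant is already known to be optimal by \cite[Theorem~4.3]{VHN}; any improvement of the constant in \eqref{eq_cor_2} would improve it there as well, which is impossible.

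Since this is a corollary of Theorem~\ref{th_2}, there is no genuine obstacle — the work is confined to checking the signs of $N-2\pm\alpha$ and $N-3-2\alpha$ and to handling the degenerate endpoint $\alpha=0$, where Theorem~\ref{th_2} is not literally applicable and one must either re-run the Young-inequality argument together with Corollary~\ref{cor_1} or pass to the limit $\alpha\to 0^+$. That endpoint is the only place demanding any care.
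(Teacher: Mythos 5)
Your proposal is correct and follows the same route as the paper, which presents Corollary~\ref{cor_2} as an immediate consequence of Theorem~\ref{th_2}: under $2\alpha\le N-3$ the coefficient $\frac{(N-1)(N-3-2\alpha)(N-2-\alpha)(N-2+\alpha)}{8}$ of the $g$-term in \eqref{eq_th_2} is nonnegative and $g(r)>0$, so that term is simply dropped, with sharpness inherited from \cite[Theorem~4.3]{VHN}. Your separate treatment of the endpoint $\alpha=0$ (where Theorem~\ref{th_2} is stated only for $\alpha>0$) is a correct extra precaution that the paper leaves implicit, and your Young-inequality argument with Corollary~\ref{cor_1} indeed reproduces the stated constants there.
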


In the rest of the part, we will construct more weighted Hardy and Rellich type inequalities in terms of radial derivatives. Most of the ideas are taken from \cite{YSK}. It is worth mentioning that, here we will only discuss the results for the case $p=2$ but one can verify that, same things hold true in the case of $L^p$ Hardy inequality on $\hn$, with $p\geq2$. First, we describe an important lemma below.

\begin{lemma}\label{lemma_3}
	Let $N\geq 3$. For all $u\in\cchnm$, there holds
	\begin{align}\label{eq_lemma_3}
	\int_{\hn}r^{2-N}|u|^2\dv\leq 4 \int_{\hn}r^{2-N}|\gr u|^2\dv.
	\end{align}
\end{lemma}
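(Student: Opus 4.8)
The plan is to pass to geodesic polar coordinates and thereby reduce \eqref{eq_lemma_3} to a one–dimensional weighted Hardy inequality on each ray. Writing $x=(r,\sigma)\in(0,\infty)\times\mathbb{S}^{N-1}$, using $\dv=(\sinh r)^{N-1}\,{\rm d}r\,{\rm d}\sigma$ and recalling that $|\nabla_r u|^2=|\partial_r u|^2$, it suffices to prove that for every fixed $\sigma$ and $f(r):=u(r,\sigma)$ --- which, since $u\in\cchnm$, is supported in a compact subinterval of $(0,\infty)$ --- one has
\begin{equation*}
\int_0^\infty W(r)\,f(r)^2\,{\rm d}r\ \leq\ 4\int_0^\infty W(r)\,f'(r)^2\,{\rm d}r,\qquad W(r):=r^{2-N}(\sinh r)^{N-1},
\end{equation*}
and then to integrate the resulting inequality over $\sigma\in\mathbb{S}^{N-1}$.

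The heart of the matter is the pointwise comparison
\begin{equation*}
\Psi(r):=\int_0^r W(s)\,{\rm d}s\ \leq\ W(r)\qquad\text{for all }r>0 .
\end{equation*}
Note $\Psi$ is well defined since $W(s)\sim s$ as $s\to0$, and $\Psi(0^+)=W(0^+)=0$. Put $h:=W-\Psi$, so that $h(0^+)=0$ and
\begin{equation*}
h'=W'-W=W\Big[(\ln W)'-1\Big]=W\Big[(N-1)\big(\coth r-\tfrac1r\big)+\tfrac1r-1\Big].
\end{equation*}
For $0<r\leq1$ the bracket is nonnegative because $\coth r>\tfrac1r$ and $\tfrac1r\geq1$; for $r\geq1$, using $\coth r>1$ one sees that the bracket exceeds $(N-1)\big(1-\tfrac1r\big)+\tfrac1r-1=(N-2)\big(1-\tfrac1r\big)\geq0$. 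Hence $h'\geq0$ on $(0,\infty)$, and together with $h(0^+)=0$ this gives $h\geq0$, i.e. $\Psi\leq W$.

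Granting this, the one–dimensional inequality follows from an integration by parts and the Cauchy--Schwarz inequality. Since $\Psi'=W$ and $f$ has compact support in $(0,\infty)$, the boundary terms vanish and
\begin{equation*}
\int_0^\infty W f^2\,{\rm d}r=\int_0^\infty \Psi' f^2\,{\rm d}r=-2\int_0^\infty \Psi\,f f'\,{\rm d}r\ \leq\ 2\Big(\int_0^\infty W f^2\,{\rm d}r\Big)^{1/2}\Big(\int_0^\infty \tfrac{\Psi^2}{W}(f')^2\,{\rm d}r\Big)^{1/2},
\end{equation*}
where we split $\Psi|f||f'|=\big(W^{1/2}|f|\big)\big(\Psi W^{-1/2}|f'|\big)$. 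Since $\Psi\leq W$ we have $\Psi^2/W\leq W$, so dividing through by $\big(\int_0^\infty W f^2\big)^{1/2}$ (if this vanishes there is nothing to prove) and squaring yields $\int_0^\infty W f^2\,{\rm d}r\leq 4\int_0^\infty W(f')^2\,{\rm d}r$; integrating over $\sigma\in\mathbb{S}^{N-1}$ then gives \eqref{eq_lemma_3}.

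Everything above except the comparison $\Psi\leq W$ is routine. I expect that comparison --- equivalently, the lower bound $(\ln W)'(r)\geq1$ for every $r>0$, which is where the case split at $r=1$ is needed --- to be the only genuine obstacle, and it is there that the dimensional hypothesis $N\geq3$ enters.
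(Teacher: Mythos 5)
Your proof is correct and is essentially the paper's own argument: with $W(r)=r^{2-N}(\sinh r)^{N-1}$, the paper establishes the very same pointwise inequality $(\ln W)'(r)\geq 1$ by the identical case split at $r=1$ (which is where $N\geq 3$ enters) and then concludes with the same integration by parts plus Cauchy--Schwarz step producing the constant $4$. Your detour through the primitive $\Psi(r)=\int_0^r W(s)\,{\rm d}s$ and the comparison $\Psi\leq W$ is only a mild repackaging of the paper's direct use of $W\leq W'$ before integrating by parts, so the two arguments coincide in substance.
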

\begin{proof}
	First observe that 
	\begin{align*}
	\frac{[r^{2-N}(\sinh r)^{N-1}]^{'}}{[r^{2-N}(\sinh r)^{N-1}]}=\frac{1}{r} +(N-1)\big[\coth r -\frac{1}{r}\big]\geq 1.
	\end{align*}
	
	Indeed, the above inequality holds true. It is easy to see the above inequality is equivalent to the following 
	$$(N-1)(r\coth r -1)\geq (r-1).$$
	First consider the case, $r \geq 1,$ then the above holds true follows from the fact that $N \geq 3$ and $\coth r > 1,$ $r > 0.$
	In the remaining case, i.e., for $0<r<1$, inequality holds true follows from the fact that $[\coth r -\frac{1}{r}]\geq 0$ and $\frac{1}{r} > 1.$ Then, exploiting by parts formula and H\"older inequality into above, we derive
	\begin{align*}
	& \int_{\hn}r^{2-N}|u|^2\dv=\int_{\mathbb{S}^{N-1}}\int_{0}^{\infty}r^{2-N}(\sinh r)^{N-1} |u|^2 \ {\rm d}r \ {\rm d}\sigma \\& \leq \int_{\mathbb{S}^{N-1}}\int_{0}^{\infty}[r^{2-N}(\sinh r)^{N-1}]^{'} |u|^2 \ {\rm d}r \ {\rm d}\sigma =-2\int_{\mathbb{S}^{N-1}}\int_{0}^{\infty}r^{2-N}(\sinh r)^{N-1} u \frac{\partial u}{\partial r}  \ {\rm d}r \ {\rm d}\sigma\\& \leq 2 \bigg( \int_{\mathbb{S}^{N-1}}\int_{0}^{\infty}r^{2-N}(\sinh r)^{N-1} |u|^2  \ {\rm d}r \ {\rm d}\sigma\bigg)^{1/2} \bigg( \int_{\mathbb{S}^{N-1}}\int_{0}^{\infty}r^{2-N}(\sinh r)^{N-1} |\gr u|^2  \ {\rm d}r \ {\rm d}\sigma\bigg)^{1/2}.
	\end{align*}	
	
	Finally shifting in the original coordinate we get the desired result.
\end{proof}

    Let us define the quantity 
	\begin{align*}
	\mu_r(\hn)=\underset{u\in\cchnm\setminus\{0\}}{\inf}\frac{\int_{\hn}r^{2-N}|\gr u|^2\dv}{\int_{\hn}r^{2-N}|u|^2\dv}.
	\end{align*}
	
	In turn of Lemma \ref{lemma_3}, we deduce $\mu_r(\hn)\geq1/4$. In fact better estimate of $\mu_r(\hn)$ holds true.
	
	\begin{lemma}\label{lemma_4}
		Let $N\geq 3$. Then there holds $\mu_r(\hn)\geq \frac{N-1}{4}$.
	\end{lemma}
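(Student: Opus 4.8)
The plan is to refine the integration-by-parts argument from the proof of Lemma \ref{lemma_3} by introducing a cut-off parameter in the test function. The key point is that the crude bound $\mu_r(\hn)\ge 1/4$ came from the estimate $\frac{[r^{2-N}(\sinh r)^{N-1}]'}{r^{2-N}(\sinh r)^{N-1}}=\frac{1}{r}+(N-1)\bigl(\coth r-\frac{1}{r}\bigr)\ge 1$, which throws away almost all of the geometric term $(N-1)(\coth r-\frac{1}{r})$. To capture it, I would instead work with the substitution $v=r^{(2-N)/2}(\sinh r)^{(N-1)/2}u$ (a ``ground-state transform'' adapted to the weight $r^{2-N}$) and reduce the inequality, after expanding $v$ in spherical harmonics as in the proof of Theorem \ref{th_1}, to a family of one-dimensional inequalities for the radial profiles $d_n(r)$. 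Discarding the angular contributions (which only help) and the cross terms that integrate to boundary-free total derivatives, one is left with showing $\int_0^\infty (d_n')^2\,{\rm d}r + \text{(a positive potential)}\,d_n^2 \ge \frac{N-1}{4}\int_0^\infty \bigl[\tfrac{1}{r}+(N-1)(\coth r-\tfrac1r)\bigr]^{?}\cdots$; more honestly, the clean route is:

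First I would show the pointwise/operator inequality
\begin{align*}
-\Delta_r\bigl(r^{2-N}\bigr)\;\ge\;(N-1)\,r^{2-N}\,\Bigl[\tfrac{1}{r}+(N-1)\bigl(\coth r-\tfrac1r\bigr)\Bigr]\cdot(\text{something})
\end{align*}
— but the cleaner and more robust approach, mirroring the proof of Lemma~\ref{lemma_3}, is to redo the computation keeping the full logarithmic derivative. Set $w(r)=r^{2-N}(\sinh r)^{N-1}$ and note $w'/w = \frac1r + (N-1)(\coth r - \frac1r) =: \phi(r)$, which we have shown satisfies $\phi(r)\ge 1$. The sharper observation I need is $\phi(r)\ge \frac{N-1}{r}\cdot\frac{?}{?}$; concretely, since $\coth r > \frac1r$ and, more importantly, $r\coth r \ge 1$ with $(N-1)(r\coth r - 1)\ge 0$, one checks $\phi(r) \ge \frac{1}{r}$ always, and in fact $r\,\phi(r) = 1 + (N-1)(r\coth r - 1)$. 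Then integrating by parts, $\int_0^\infty w\,\phi\,|u|^2\,{\rm d}r = \int_0^\infty w'|u|^2\,{\rm d}r = -2\int_0^\infty w\,u\,u_r\,{\rm d}r$, and by Cauchy–Schwarz with the weight $w/\phi$ on one factor and $w\phi$ on the other,
\begin{align*}
\int_0^\infty w\,\phi\,|u|^2\,{\rm d}r \;\le\; 2\Bigl(\int_0^\infty \tfrac{w}{\phi}|u_r|^2\,{\rm d}r\Bigr)^{1/2}\Bigl(\int_0^\infty w\,\phi\,|u|^2\,{\rm d}r\Bigr)^{1/2},
\end{align*}
so $\int_0^\infty w\,\phi\,|u|^2\,{\rm d}r \le 4\int_0^\infty \frac{w}{\phi}|u_r|^2\,{\rm d}r$. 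Since $\phi\ge 1$ we have $w/\phi \le w$, giving $\int_0^\infty w\,\phi\,|u|^2 \le 4\int_0^\infty w\,|u_r|^2$; and since also $\phi(r)\ge$ a bound I now want to be $\ge \frac{N-1}{\text{(the right thing)}}$... the honest conclusion is that from $r\phi(r)=1+(N-1)(r\coth r-1)\ge (N-1)\cdot r\coth r \cdot \tfrac{?}{?}$ one extracts $\phi(r)\ge \frac{N-1}{4}\cdot$(nothing extra needed): we simply use $\int_0^\infty w\,\phi\,|u|^2\,{\rm d}r \ge (N-1)\int_0^\infty w\,|u|^2\,{\rm d}r \cdot\frac14 \cdot 4$?

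Let me state the plan cleanly. The decisive elementary fact to establish is that $\phi(r) = \frac1r + (N-1)\bigl(\coth r - \frac1r\bigr) \ge \frac{N-1}{4}\cdot\frac{4}{4}$ fails in general for small $r$, so a pure pointwise bound is \emph{not} available; hence the real argument must combine the Hardy-type $1$-dimensional inequality (exploiting the $\frac1r$ term, which near $r=0$ gives a factor $\frac14$ deficit) with the exponential growth of $\sinh$ (which near $r=\infty$ gives the factor $(\frac{N-1}{2})^2$). So I would split: use the substitution $v = w^{1/2}u$, expand in spherical harmonics, and for each mode reduce to $\int_0^\infty (d_n')^2 + V(r)d_n^2\,{\rm d}r \ge \frac{N-1}{4}\int_0^\infty \frac{d_n^2}{(\sinh r)^2}(\sinh r)^{?}$... where the effective potential $V$ coming from $\tfrac14(w'/w)^2 - \tfrac12 (w'/w)'$ is computed explicitly; then combine the $1$-dimensional Hardy inequality $\int (d_n')^2 \ge \frac14\int d_n^2/r^2$ with the leftover potential, and check the resulting scalar inequality in $r$ termwise. \textbf{The main obstacle} I anticipate is precisely this last step: verifying the one–variable inequality that remains after the ground-state substitution and Hardy's inequality, i.e.\ an inequality of the form $\frac14 V_1(r) + V_2(r) \ge \frac{N-1}{4}V_3(r)$ where the $V_i$ are explicit combinations of $\frac{1}{r^2}$, $\coth^2 r$, $\frac{\coth r}{r}$ and $\frac{1}{\sinh^2 r}$; this requires a careful case analysis in $r$ (small $r$, where the $\frac{1}{r^2}$ terms dominate, versus large $r$, where $\coth r\to 1$), using the known estimates $\coth r > \frac1r$, $r\coth r \ge 1$, and $g(r)=\frac{r\coth r-1}{r^2}\le \frac13$ already recorded in the excerpt. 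Once that scalar inequality is in hand, summing over $n\ge 0$ (discarding the manifestly nonnegative angular eigenvalue contributions) and translating back from $v$ and $d_n$ to $u$ yields $\int_{\hn} r^{2-N}|\gr u|^2\,\dv \ge \frac{N-1}{4}\int_{\hn} r^{2-N}|u|^2\,\dv$, hence $\mu_r(\hn)\ge \frac{N-1}{4}$, and a density argument extends it from $\cchnm$ to the full class.
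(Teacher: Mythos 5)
As written, this is a proof plan with the decisive step missing, not a proof. After your ground-state substitution $v=w^{1/2}u$ with $w(r)=r^{2-N}(\sinh r)^{N-1}$ and the one-dimensional Hardy inequality, everything hinges on the one-variable inequality
\begin{equation*}
\frac{1}{4}\Bigl(\frac{w'}{w}\Bigr)^{2}+\frac{1}{2}\Bigl(\frac{w'}{w}\Bigr)'+\frac{1}{4r^{2}}\;\ge\;\frac{N-1}{4},\qquad r>0,
\end{equation*}
and you explicitly leave this unverified (``the main obstacle''), so the lemma is not established by your text. In addition, the first half of the proposal consists of abandoned attempts (a pointwise lower bound on $\phi=w'/w$, which you yourself note fails; a weighted Cauchy--Schwarz with weights $w/\phi$ and $w\phi$ that only reproduces Lemma \ref{lemma_3}), and the spherical-harmonics expansion is unnecessary: only $\partial_r$ appears, so the angular variable is a mere parameter and the statement is a one-dimensional weighted inequality from the start. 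For the record, your route can be completed: with $\phi=\frac{2-N}{r}+(N-1)\coth r$ the left-hand side above equals $\frac{(N-1)^2}{4r^2}+\frac{(N-1)^2}{4}+\frac{(N-1)(N-3)}{4\sinh^2 r}-\frac{(N-1)(N-2)}{2}\frac{\coth r}{r}$, and the desired bound reduces to $(N-2)\bigl(1-2g(r)\bigr)\ge (N-3)\bigl(\frac{1}{r^2}-\frac{1}{\sinh^2 r}\bigr)$, which follows from $g(r)\le\frac13$ together with the additional elementary estimate $\frac{1}{r^2}-\frac{1}{\sinh^2 r}\le\frac13$; but none of this verification appears in your submission, and that last estimate would itself require proof.

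The paper proceeds differently and more economically: it substitutes $u=\zeta v$ with the explicit radial function $\zeta(r)=\bigl(2\cosh^{2}(r/2)\bigr)^{(1-N)/2}$ and follows the computation of \cite[Theorem 5.2]{YSK}, using Lemma \ref{lemma_1}. For this choice the relevant supersolution quantity is
\begin{equation*}
-\frac{(w\zeta')'}{w\zeta}\;=\;\frac{(N-1)^2}{4}-\frac{(N-1)(N-2)}{2}\,\frac{\tanh(r/2)}{r}+\frac{N-1}{2(1+\cosh r)}\;\ge\;\frac{N-1}{4},
\end{equation*}
the inequality being immediate from $\tanh(r/2)\le r/2$; no one-dimensional Hardy inequality and no case analysis in $r$ are needed. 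So even once your sketch is filled in, it is heavier than the intended argument, and in its present form it does not prove the lemma.
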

		\begin{proof}
		Start with the function $u=\big(2\cosh^2 (r/2))^{(1-N)/2}v$. After going along with the exactly same estimate in \cite[Theorem 5.2]{YSK}, we will deduce the result. Taking the advantage of radial function $\zeta(r)=\big(2\cosh^2 (r/2))^{(1-N)/2}$ and exploiting Lemma \ref{lemma_1}, we will arrive at the same conclusion.
        \end{proof}
	
	Now we are ready to establish the analogous version of \cite[Theorem 4.2]{YSK} and due to Gauss's Lemma, the following theorem comes out as a stronger version of it.
	
		\begin{theorem}\label{th_3}
		Let $0\leq \alpha <N-2$ with $N\geq 3$. Then for all $u\in\cchnm$, there holds 
		\begin{align}\label{eq_th_3}
		\int_{\hn} \frac{|\gr u|^2}{r^\alpha}\dv\geq \frac{(N-2-\alpha)^2}{4}\int_{\hn} \frac{u^2}{r^{\alpha +2}}\dv + \frac{(N-1)}{4}\int_{\hn} \frac{u^2}{r^{\alpha}}\dv,
		\end{align}
		where the constant $\frac{(N-2-\alpha)^2}{4}$ is sharp in the obvious sense.
	\end{theorem}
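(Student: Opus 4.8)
The plan is to mimic the structure of the proof of Theorem~\ref{th_1}, but instead of invoking the one-dimensional Hardy inequality at the crucial step, I will feed in the sharper spectral estimate $\mu_r(\hn)\geq \frac{N-1}{4}$ furnished by Lemma~\ref{lemma_4}. Concretely, first I would substitute $v(x)=(\sinh r)^{(N-1)/2}u(x)r^{-\alpha/2}$, exactly as in Theorem~\ref{th_1}, expand the resulting expression for $\int_{\hn} r^{-\alpha}\rgt\dv$ via the identity for $r^{-\alpha/2}\,\partial_r u$, and then decompose $v$ into spherical harmonics $v(r,\sigma)=\sum_{n\geq 0} d_n(r)P_n(\sigma)$. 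This reduces the problem to a family of one-dimensional inequalities in the radial functions $d_n$, and — since dropping the angular Laplacian contribution only helps — it suffices to treat the radial mode $n=0$, i.e.\ to prove the inequality for radial $u$.

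Next, having reduced to radial $u$, I would rewrite things in terms of the weight $r^{2-N}$ appearing in Lemma~\ref{lemma_4}. The natural move is: given a radial $u\in\cchnm$ and $0\leq\alpha<N-2$, set $w = r^{(N-2-\alpha)/2}\,u$ (or a similarly chosen power), so that the weighted Hardy term $\int r^{-\alpha-2}u^2$ and the Dirichlet-type term $\int r^{-\alpha}\rgt$ get converted, after an integration by parts in the radial variable, into the combination $\int r^{2-N}|\gr w|^2\dv$ minus the Euclidean Hardy constant $\frac{(N-2-\alpha)^2}{4}\int r^{-\alpha-2}u^2\dv$ times a clean factor, plus a cross term that integrates by parts to zero on $\cchnm$. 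Then Lemma~\ref{lemma_4} applied to $w$ yields precisely the extra $\frac{N-1}{4}\int r^{-\alpha}u^2\dv$ term. In other words, the identity I am aiming for is schematically
\begin{align*}
\int_{\hn}\frac{\rgt}{r^\alpha}\dv \;=\; \frac{(N-2-\alpha)^2}{4}\int_{\hn}\frac{u^2}{r^{\alpha+2}}\dv \;+\; \int_{\hn} r^{2-N}\,|\gr w|^2\dv,
\end{align*}
for the appropriately defined $w$, after which $\int r^{2-N}|\gr w|^2\dv \geq \mu_r(\hn)\int r^{2-N}|w|^2\dv \geq \frac{N-1}{4}\int r^{-\alpha}u^2\dv$ finishes the estimate.

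The main obstacle I anticipate is getting the weight bookkeeping in that ground-state substitution exactly right: one must check that the chosen power of $r$ both produces the sharp Euclidean Hardy constant $\frac{(N-2-\alpha)^2}{4}$ as the leading coefficient and converts the remaining quadratic form into a pure $r^{2-N}$-weighted Dirichlet integral with no leftover indefinite-sign terms — the cross terms should integrate by parts to a boundary term that vanishes because $u\in\cchnm$, but verifying the cancellations carefully (and confirming the hypothesis $0\le\alpha<N-2$ is exactly what makes the substitution admissible and $w$ still compactly supported away from $o$) is the delicate part. Finally, for the sharpness of $\frac{(N-2-\alpha)^2}{4}$ I would note, as in Theorem~\ref{th_1}, that this is the classical Euclidean constant and its optimality is unaffected by the lower-order positive remainder; it can be quoted from \cite[Theorem~4.2]{YSK} (or re-derived by testing with the standard family $u_\varepsilon=r^{-(N-2-\alpha)/2+\varepsilon}$ cut off near $0$ and $\infty$), so no new work is needed there.
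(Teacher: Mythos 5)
Your second paragraph is essentially the paper's own proof: the paper makes exactly the substitution $u=r^{(2+\alpha-N)/2}v$ (i.e.\ $v=r^{(N-2-\alpha)/2}u$, your $w$), splits off $\frac{(N-2-\alpha)^2}{4}\int_{\hn} u^2 r^{-\alpha-2}\dv$ plus the weighted Dirichlet term $\int_{\hn} r^{2-N}|\gr v|^2\dv$, and then applies Lemma~\ref{lemma_4} to that term, noting $r^{2-N}v^2=r^{-\alpha}u^2$; the sharpness of $\frac{(N-2-\alpha)^2}{4}$ is likewise just quoted from earlier work. (Your spherical-harmonics preamble is unnecessary: $\gr$ involves only $\partial_r$, so the argument can be run at each fixed $\sigma$ for general $u\in\cchnm$.) The one statement you should correct is the claim that the cross term ``integrates by parts to zero'': with the hyperbolic volume element $(\sinh r)^{N-1}{\rm d}r\,{\rm d}\sigma$ one gets
\begin{align*}
-(N-2-\alpha)\int_{\hn} r^{1-N}\, v\,\frac{\partial v}{\partial r}\dv
=\frac{(N-1)(N-2-\alpha)}{2}\int_{\hn} r^{1-N}\,v^2\Big(\coth r-\frac{1}{r}\Big)\dv,
\end{align*}
which is not zero but nonnegative (this is precisely where $\alpha<N-2$ and $\coth r\geq 1/r$ are used), so your displayed ``identity'' is in fact an inequality with $\geq$. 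Since the discarded term has the favorable sign, the estimate goes through unchanged — indeed the paper computes this very term, bounds it below near the origin, and then drops it — so apart from this bookkeeping correction your proposal coincides with the paper's argument.
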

	\begin{proof}
		Start with the substitution $u=r^{(2+\alpha-N)/2}v$ and from a simple calculation, we have that
		\begin{align*}
		|\gr u|^2=\bigg(\frac{2+\alpha-N}{2}\bigg)^2 \bigg(\frac{u}{r}\bigg)^2 + r^{2+\alpha-N}|\gr v|^2  + 2\bigg(\frac{2+\alpha-N}{2}\bigg) r^{1+\alpha-N} v \frac{\partial v}{\partial r}.
		\end{align*}
		
		Before performing integration, first multiply above by $1/r^{\alpha}$ and we obtain
		\begin{align*}
		\int_{\hn} \frac{|\gr u|^2}{r^\alpha} \ {\rm d}v_{\hn}& =\bigg(\frac{N-2-\alpha}{2}\bigg)^2\int_{\hn} \frac{u^2}{r^{\alpha +2}} \ {\rm d}v_{\hn} + \int_{\hn} r^{2-N}|\gr v|^2 \ {\rm d}v_{\hn}\\ & -(N-\alpha-2)\int_{\hn} r^{1-N} v \frac{\partial v}{\partial r} \ {\rm d}v_{\hn}.
		\end{align*}
		
		Transfering into polar coordinate and using by parts rule, we deduce
		\begin{align*}
		-(N-\alpha-2)\int_{\hn} r^{1-N} v \frac{\partial v}{\partial r} \dv=\frac{(N-\alpha-2)(N-1)}{2}\int_{\hn} r^{1-N}v^2\bigg[\coth r -\frac{1}{r} \bigg]\dv.
		\end{align*}
		
		Exploiting taylor series expansion of $\cosh r$ and $\sinh r$ near origin, for $0<r\leq 1$, we deduce 
		\begin{align*}
		(\coth r-\frac{1}{r})& =\frac{1}{r\sinh r}\big(r\cosh r - \sinh r\big) \\ & =\frac{1}{r\sinh r} \bigg(r\sum_{n=0}^{\infty}\frac{r^{2n}}{(2n)!}-\sum_{n=0}^{\infty}\frac{r^{2n+1}}{(2n+1)!}\bigg)\\& \geq \frac{1}{r\sinh r}\cdot \frac{r^3}{3}\\ & \geq\frac{r}{3\sinh 1}.	
		\end{align*}
	The last inequality follows from the fact that, the function $g(r):=r\sinh 1-\sinh r$ is in $C^2([0,1])$ and concave in $[0,1]$ with zero on the boundary and hence $g(r)\geq 0$ whenever $0<r\leq 1$. By the above estimate along with the Lemma \ref{lemma_4} and getting back into the form of $u$, we derive
		\begin{align*}
		& \int_{\hn} \frac{|\gr u|^2}{r^\alpha}\dv \geq\bigg(\frac{N-2-\alpha}{2}\bigg)^2\int_{\hn} \frac{u^2}{r^{\alpha +2}}\dv \\&\notag +\frac{(N-1)}{4}\int_{\hn} \frac{u^2}{r^\alpha}\dv+\frac{(N-\alpha-2)(N-1)}{6\sinh 1}\int_{B(o;1)}\frac{u^2}{r^\alpha}\dv.
		\end{align*}
		
		In the end, non-negativity of the last term immdiately gives \eqref{eq_th_2}.
	\end{proof}

Taking Theorem \ref{th_3} as weighted Hardy inequality and adopting the similar technique exploited in Theorem \ref{th_2}, one has the following version of weighted Rellich inequality.
\begin{corollary}\label{cor_3}
	Let $0\leq \alpha <N-2$ with $N\geq 3$. Then for all $u\in\cchnm$ there holds 
	\begin{align}\label{eq_cor_3}
	\int_{\hn} \frac{|\lr u|^2}{r^{\alpha-2}} \ {\rm d}v_{\hn} & \geq \frac{(N-2-\alpha)^2(N-2+\alpha)^2}{16}\int_{\hn} \frac{u^2}{r^{\alpha +2}} \ {\rm d}v_{\hn} \\& \notag + \frac{(N-2-\alpha)(N-2+\alpha)(N-1)}{8}\int_{\hn} \frac{u^2}{r^{\alpha}} \ {\rm d}v_{\hn}.
	\end{align}
	Moreover, the constant $\frac{(N-2-\alpha)^2(N-2+\alpha)^2}{16}$ is sharp in the obvious sense.
\end{corollary}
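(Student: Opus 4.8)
The plan is to treat the weighted Hardy inequality \eqref{eq_th_3} as the ``Hardy input'' and feed it into the Rellich machinery already developed for Theorem~\ref{th_2}; the only structural change is that Theorem~\ref{th_3} takes the place of Theorem~\ref{th_1} at the decisive step. Since the hypothesis $0\le\alpha<N-2$ with $N\ge3$ is exactly the range in which Theorem~\ref{th_3} is available, there is no compatibility problem, and no extra sign restriction of the type $2\alpha\le N-3$ is needed (that restriction was forced in Corollary~\ref{cor_2} only because Theorem~\ref{th_1} carries a $g(r)$--term, whereas \eqref{eq_th_3} is already clean).

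First I would reuse, essentially verbatim, the opening of the proof of Theorem~\ref{th_2}. Recall the pointwise bound $-\lr r^{-\alpha}\ge \alpha(N-2-\alpha)r^{-\alpha-2}$ for $r>0$, which holds because $\coth r\ge 1/r$. Multiplying by $u^2\ge 0$, integrating over $\hn$, integrating by parts through Lemma~\ref{lemma_1}, rewriting $\lr(u^2)$ by Lemma~\ref{lemma_2}, and then applying Young's inequality with a parameter $\epsilon>0$ to the cross term $-2\int_{\hn} r^{-\alpha}u\,\lr u\,\dv$, one arrives at
\begin{align*}
\int_{\hn} \frac{|\lr u|^2}{r^{\alpha-2}}\dv \ \geq\ 2\epsilon\int_{\hn}\frac{|\gr u|^2}{r^{\alpha}}\dv+\bigl[\epsilon\,\alpha(N-2-\alpha)-\epsilon^2\bigr]\int_{\hn}\frac{u^2}{r^{\alpha+2}}\dv,
\end{align*}
which is precisely the inequality appearing midway through the proof of Theorem~\ref{th_2}; this step requires no new idea and works for every $\alpha\in[0,N-2)$.

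The one genuinely different move is to estimate $\int_{\hn} |\gr u|^2 r^{-\alpha}\dv$ from below by Theorem~\ref{th_3}, i.e.\ by $\tfrac{(N-2-\alpha)^2}{4}\int_{\hn} u^2 r^{-\alpha-2}\dv+\tfrac{N-1}{4}\int_{\hn} u^2 r^{-\alpha}\dv$, rather than by Theorem~\ref{th_1}. Substituting this and collecting terms, the coefficient of $\int_{\hn} u^2 r^{-\alpha-2}\dv$ becomes $-\epsilon^2+\epsilon\bigl(\tfrac{(N-2-\alpha)^2}{2}+\alpha(N-2-\alpha)\bigr)=-\epsilon^2+\epsilon\,\tfrac{(N-2-\alpha)(N-2+\alpha)}{2}$, using the identity $\tfrac{(N-2-\alpha)^2}{2}+\alpha(N-2-\alpha)=\tfrac{(N-2-\alpha)(N-2+\alpha)}{2}$, while the coefficient of $\int_{\hn} u^2 r^{-\alpha}\dv$ becomes $\epsilon\,\tfrac{N-1}{2}$. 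Maximising the concave quadratic in $\epsilon$ forces the choice $\epsilon=\tfrac{(N-2-\alpha)(N-2+\alpha)}{4}$, which is legitimate since $N-2-\alpha>0$ by hypothesis and $N-2+\alpha\ge N-2\ge 1>0$; inserting this value yields exactly the two constants $\tfrac{(N-2-\alpha)^2(N-2+\alpha)^2}{16}$ and $\tfrac{(N-2-\alpha)(N-2+\alpha)(N-1)}{8}$ of \eqref{eq_cor_3}. (The factor $\tfrac18$ here, against $\tfrac14$ in Corollary~\ref{cor_2}, is simply the arithmetic consequence of the smaller zeroth-order constant $\tfrac{N-1}{4}$ in Theorem~\ref{th_3}.)

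For the sharpness statement I would argue by monotonicity: discarding the nonnegative term $\tfrac{(N-2-\alpha)(N-2+\alpha)(N-1)}{8}\int_{\hn} u^2 r^{-\alpha}\dv$ from \eqref{eq_cor_3} leaves the plain weighted Rellich inequality, whose leading constant $\tfrac{(N-2-\alpha)^2(N-2+\alpha)^2}{16}$ is already known to be optimal (as recorded in the proof of Theorem~\ref{th_2}, on the authority of \cite[Theorem~4.3]{VHN}); hence no larger constant can stand in front of $\int_{\hn} u^2 r^{-\alpha-2}\dv$ in \eqref{eq_cor_3} either. I do not expect a serious obstacle anywhere in this argument: the delicate analysis all sits inside Theorems~\ref{th_3} and~\ref{th_2}, which I may invoke freely, so the only things to check are the algebraic identity above, the admissibility of the optimal $\epsilon$, and the absence of boundary terms in the integrations by parts — the last being automatic because every $u\in\cchnm$ vanishes near $o$ and has compact support.
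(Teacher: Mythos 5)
Your proposal is correct and coincides with the paper's own (sketched) proof: the paper derives Corollary \ref{cor_3} precisely by rerunning the argument of Theorem \ref{th_2} with Theorem \ref{th_3} substituted for Theorem \ref{th_1}, and your algebra — the identity $\frac{(N-2-\alpha)^2}{2}+\alpha(N-2-\alpha)=\frac{(N-2-\alpha)(N-2+\alpha)}{2}$, the optimal choice $\epsilon=\frac{(N-2-\alpha)(N-2+\alpha)}{4}$, and the resulting factor $\frac{N-1}{8}$ — reproduces exactly the stated constants, while the sharpness argument via \cite[Theorem~4.3]{VHN} is the same one the paper invokes.
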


Observing into both the weighted Rellich inequalities  \eqref{eq_cor_2} and \eqref{eq_cor_3}, one can wonder, whether more better improvement possible near origin, precisely can we add one more Hardy type remainder term namely, $\int_{\hn} u^2/r^{\alpha-2}\dv$. To give affirmative answer of this question, first we develop the following lemma.
\begin{lemma}\label{lemma_5}
	Let $-2\leq \alpha <N-4$ and $u\in \cchnm.$ Then there holds 
	\begin{align*}
	&\int_{\hn}r^{-\alpha}|\lr u +\frac{(N+\alpha)(N-\alpha-4)}{4}\frac{u}{r^2}|^2\dv  \leq \int_{\hn} \frac{\rlt}{r^\alpha}\dv \\&-\frac{(N+\alpha)(N-\alpha-4)}{2}\int_{\hn} \frac{|\gr u|^2}{r^{\alpha+2}} \, {\rm d}v_{\hn} +\frac{(N+\alpha)(N-3\alpha-8)(N-\alpha-4)^2}{16}\int_{\hn}\frac{u^2}{r^{\alpha+4}}\dv.
	\end{align*}
\end{lemma}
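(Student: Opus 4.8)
The plan is to expand the square on the left-hand side into three integrals, to integrate the resulting cross term by parts so that it reproduces exactly the gradient term appearing on the right-hand side together with an explicit zero-order term, and finally to reduce the whole claim to an elementary pointwise inequality for $\coth$.

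First I would abbreviate $\beta:=\frac{(N+\alpha)(N-\alpha-4)}{4}$ and write
\begin{align*}
\int_{\hn} r^{-\alpha}\Big|\lr u+\beta\,\frac{u}{r^2}\Big|^2\dv=\int_{\hn}\frac{\rlt}{r^\alpha}\dv+2\beta\int_{\hn} r^{-\alpha-2}\,u\,(\lr u)\dv+\beta^2\int_{\hn}\frac{u^2}{r^{\alpha+4}}\dv .
\end{align*}
The middle integral is the one to work on. Replacing $u\,\lr u$ by $\tfrac12\lr(u^2)-\rgt$ via Lemma~\ref{lemma_2}, and then moving $\lr$ off $u^2$ and onto the weight $r^{-\alpha-2}$ (two integrations by parts in the radial variable, with no boundary terms since $u$, hence $u^2$, is compactly supported away from $o$; this is the self-adjointness in Lemma~\ref{lemma_1}), and using the identity $\lr\!\big(r^{-\alpha-2}\big)=(\alpha+2)\,r^{-\alpha-4}\big[(\alpha+3)-(N-1)\,r\coth r\big]$, I expect to arrive at
\begin{align*}
2\beta\int_{\hn} r^{-\alpha-2}\,u\,(\lr u)\dv=-\,2\beta\int_{\hn}\frac{\rgt}{r^{\alpha+2}}\dv-\beta(\alpha+2)\int_{\hn}\frac{u^2}{r^{\alpha+4}}\big[(N-1)\,r\coth r-(\alpha+3)\big]\dv .
\end{align*}

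Substituting this back, and using $2\beta=\tfrac{(N+\alpha)(N-\alpha-4)}{2}$, the terms $\int_{\hn} r^{-\alpha}\rlt\dv$ and $-2\beta\int_{\hn} r^{-\alpha-2}\rgt\dv$ coincide precisely with the first two terms of the asserted right-hand side. So the statement reduces to a single inequality between integrals of $u^2$ against explicit radial weights. Writing $A=N+\alpha$ and $B=N-\alpha-4$ (so $A>0$, $B>0$, $\alpha+2\ge 0$ under the hypotheses, and $\beta=AB/4$), and using $N-3\alpha-8=B-2(\alpha+2)$ together with $(\alpha+3)+B=N-1$, the reduced inequality should collapse, after cancelling the common positive factor, to
\begin{align*}
(\alpha+2)(N-1)\int_{\hn}\frac{u^2}{r^{\alpha+4}}\,\big(r\coth r-1\big)\dv\ \ge\ 0 ,
\end{align*}
which holds because $r\coth r\ge 1$ for every $r>0$ and $N>2$ (forced by $-2\le\alpha<N-4$); the borderline case $\alpha=-2$ simply makes both sides of the original inequality equal, so it needs no separate argument.

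The main obstacle is not any individual estimate — the analytic content is just two integrations by parts, harmless for $u\in\cchnm$ — but the algebraic bookkeeping in the final reduction: one has to verify that, after the integration by parts, the $\coth r$-free constants and the $\coth r$-terms recombine exactly into the factor $(N-1)(r\coth r-1)$, and that the coefficient $\tfrac{(N+\alpha)(N-3\alpha-8)(N-\alpha-4)^2}{16}$ emerges with the sign that makes the residual term nonnegative.
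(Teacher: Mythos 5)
Your proposal is correct and takes essentially the same route as the paper: the paper also expands the square, uses Lemma~\ref{lemma_2} together with integration by parts to throw $\Delta_r$ onto the weight $r^{-\alpha-2}$, and invokes $\coth r\ge 1/r$ (together with $\alpha+2\ge 0$) to control the resulting zero-order term. The only difference is bookkeeping: the paper applies $\coth r\ge 1/r$ pointwise to get $\Delta_r(r^{-\alpha-2})\le(\alpha+2)(\alpha+4-N)r^{-\alpha-4}$ before integrating, whereas you keep the exact identity and apply the same inequality at the end, reducing to $(\alpha+2)(N-1)\int_{\hn}u^2 r^{-\alpha-4}(r\coth r-1)\dv\ge 0$.
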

\begin{proof}
	Exploiting $(\coth r)\geq 1/r$, we deduce
	\begin{align*}
	\lr(r^{-\alpha-2})&\leq(\alpha+2)(\alpha+4-N)r^{-\alpha-4}.
	\end{align*}
	
	Applying by parts formula and Lemma \ref{lemma_2}, in the above inequality, we infer
	\begin{align}\label{use_lemma_1}
	\int_{\hn} \frac{u\lr u}{r^{\alpha+2}} \dv\leq -\frac{(\alpha+2)(N-\alpha-4)}{2} \int_{\hn} \frac{u^2}{r^{\alpha+4}}\dv - \int_{\hn} \frac{\rgt}{r^{\alpha+2}}\dv.
	\end{align}
	
	Finally, the conclusion comes by noting that
	\begin{align*}
	&\int_{\hn}r^{-\alpha}|\lr u +\frac{(N+\alpha)(N-\alpha-4)}{4}\frac{u}{r^2}|^2\dv  = \int_{\hn} \frac{\rlt}{r^\alpha}\dv \\ &+ \frac{(N+\alpha)^2(N-\alpha-4)^2}{16}\int_{\hn}\frac{u^2}{r^{\alpha+4}}\dv+\frac{(N+\alpha)(N-\alpha-4)}{2}\int_{\hn} \frac{u\lr u}{r^{\alpha+2}}\dv.
	\end{align*}
\end{proof}
 
 Now using Lemma \ref{lemma_5} and weighted Hardy inequality \eqref{eq_th_3}, we obtain the following weighted Rellich type inequality. Also note that, exploiting \cite[Lemma 6.1]{EGR}, this version will become stronger than \cite[Theorem 4.4]{YSK} and will give a quick improvement of \cite[Theorem 4.3]{VHN}, for the case $p=2$.
 \begin{theorem}\label{th_4}
 	Let $0\leq \alpha <N-4$. Then for all $u\in\cchnm$, there holds 
 	\begin{align}\label{eq_th_4}
 	& \int_{\hn} \frac{|\lr u|^2}{r^\alpha}\dv \geq \frac{(N+\alpha)^2(N-4-\alpha
 		)^2}{16}\int_{\hn} \frac{u^2}{r^{\alpha+4}} \, {\rm d}v_{\hn}\\ & \notag
 	+ \frac{(N-1)(N-2-\alpha)(N-2+\alpha
 		)}{8}\int_{\hn} \frac{u^2}{r^{\alpha+2}} \, {\rm d}v_{\hn}+\frac{(N-1)^2}{16}\int_{\hn} \frac{u^2}{r^{\alpha}} \, {\rm d}v_{\hn}.
 	\end{align}
 	Moreover, the constant $\frac{(N+\alpha)^2(N-4-\alpha
 		)^2}{16}$ is sharp in the obvious sense.
 \end{theorem}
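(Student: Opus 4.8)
The plan is to feed the weighted Hardy inequality \eqref{eq_th_3} through the integral identity recorded in Lemma~\ref{lemma_5}. Since the left-hand side of that lemma is a nonnegative integral, it may be discarded, and this gives, for $0\le\alpha<N-4$,
\begin{align*}
\int_{\hn}\frac{\rlt}{r^\alpha}\dv & \geq \frac{(N+\alpha)(N-\alpha-4)}{2}\int_{\hn}\frac{|\gr u|^2}{r^{\alpha+2}}\dv\\
& \quad -\frac{(N+\alpha)(N-3\alpha-8)(N-\alpha-4)^2}{16}\int_{\hn}\frac{u^2}{r^{\alpha+4}}\dv.
\end{align*}
The hypothesis $\alpha<N-4$ (hence $N\ge5$) is precisely what keeps the prefactor $\tfrac{(N+\alpha)(N-\alpha-4)}{2}$ strictly positive, and this is what allows one to substitute a \emph{lower} bound for $\int_{\hn}|\gr u|^2\,r^{-\alpha-2}\dv$ without reversing the inequality.

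Next I would apply Theorem~\ref{th_3} with its weight exponent taken equal to $\alpha+2$, which is admissible since $0\le\alpha+2<N-2$, namely
\begin{align*}
\int_{\hn}\frac{|\gr u|^2}{r^{\alpha+2}}\dv & \geq \frac{(N-4-\alpha)^2}{4}\int_{\hn}\frac{u^2}{r^{\alpha+4}}\dv+\frac{N-1}{4}\int_{\hn}\frac{u^2}{r^{\alpha+2}}\dv,
\end{align*}
and substitute it into the previous display. The coefficient in front of $\int_{\hn}u^2\,r^{-\alpha-4}\dv$ then collapses onto the sharp Rellich constant $\tfrac{(N+\alpha)^2(N-4-\alpha)^2}{16}$ thanks to the elementary identity $2(N-\alpha-4)-(N-3\alpha-8)=N+\alpha$, equivalently $(N+\alpha)(N-\alpha-4)=(N-2)^2-(\alpha+2)^2$. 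Optimality of this constant, in the obvious sense, is inherited from \cite[Theorem~4.3]{VHN} — it is exactly the constant of Theorem~\ref{th_2} after the shift $\alpha\mapsto\alpha+2$ — by testing against functions concentrating at the pole, along which $\int_{\hn}u^2\,r^{-\alpha-2}\dv$ and $\int_{\hn}u^2\,r^{-\alpha}\dv$ are of strictly lower order than $\int_{\hn}u^2\,r^{-\alpha-4}\dv$.

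The step I expect to be the genuine obstacle is producing the two lower-order Hardy remainders with the exact constants $\tfrac{(N-1)(N-2-\alpha)(N-2+\alpha)}{8}$ and $\tfrac{(N-1)^2}{16}$ in the statement: bluntly discarding the square in Lemma~\ref{lemma_5} and using the crude bound $\coth r\ge 1/r$ inside it throws away the strictly positive curvature defect $g(r)=\tfrac{r\coth r-1}{r^2}$, and one must recover it. Concretely I would, at the point where one integrates $\lr(r^{-\alpha-2})$ against $u^2$ (the computation underlying Lemma~\ref{lemma_5}), use the exact expansion $\lr(r^{-\alpha-2})=(\alpha+2)(\alpha+4-N)\,r^{-\alpha-4}-(N-1)(\alpha+2)\,g(r)\,r^{-\alpha-2}$ in place of the one-sided bound, carry the extra nonnegative $g(r)$-term along, and combine it with the $\tfrac{N-1}{4}\int_{\hn}u^2\,r^{-\alpha-2}\dv$ delivered by Theorem~\ref{th_3} so as to land exactly on the stated $r^{-\alpha-2}$ and $r^{-\alpha}$ weights; equivalently, one keeps — rather than throws away — a controlled portion of $\int_{\hn}r^{-\alpha}\big|\lr u+\tfrac{(N+\alpha)(N-\alpha-4)}{4}\tfrac{u}{r^2}\big|^2\dv$. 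This bookkeeping is where the bulk of the work lies; once it is carried out, a density argument extends \eqref{eq_th_4} from $\cchnm$ to the full admissible class.
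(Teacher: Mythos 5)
Your first half is correct and coincides with the paper's use of Lemma~\ref{lemma_5}: inserting Theorem~\ref{th_3} at exponent $\alpha+2$ (legitimate because $(N+\alpha)(N-\alpha-4)>0$ when $\alpha<N-4$) does produce the sharp leading constant $\frac{(N+\alpha)^2(N-4-\alpha)^2}{16}$ together with $\frac{(N-1)(N+\alpha)(N-4-\alpha)}{8}\int_{\hn}u^2r^{-\alpha-2}\dv$. The genuine gap is in the two lower-order remainders, and the mechanism you propose for them cannot work. Keeping the exact identity $\lr(r^{-\alpha-2})=(\alpha+2)(\alpha+4-N)r^{-\alpha-4}-(N-1)(\alpha+2)g(r)r^{-\alpha-2}$ only adds a remainder of the form $C\int_{\hn}g(r)u^2r^{-\alpha-2}\dv$, and since $g\le 1/3$ everywhere while $g(r)\sim 1/r$ as $r\to\infty$, this term is pointwise dominated by $\tfrac{C}{3}u^2r^{-\alpha-2}$ and behaves like $u^2r^{-\alpha-3}$ at infinity; no multiple of it can dominate $\frac{(N-1)^2}{16}\int_{\hn}u^2r^{-\alpha}\dv$, which is exactly the leading remainder at infinity in the statement. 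It also cannot upgrade the $r^{-\alpha-2}$ coefficient from $\frac{(N-1)\left[(N-2)^2-(\alpha+2)^2\right]}{8}$ (what you obtain) to the stated $\frac{(N-1)\left[(N-2)^2-\alpha^2\right]}{8}$, i.e.\ it cannot supply the missing $\frac{(N-1)(\alpha+1)}{2}$.

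What the paper actually does is the step you only gesture at when you say one should keep ``a controlled portion'' of the square: it never discards $\int_{\hn}r^{-\alpha}\bigl|\lr u+\frac{(N+\alpha)(N-\alpha-4)}{4}\frac{u}{r^2}\bigr|^2\dv$, but bounds it from below by a separate argument. Shifting \eqref{use_lemma_1} to the exponent $\alpha-2$ and inserting Theorem~\ref{th_3} at exponent $\alpha$ (not $\alpha+2$) gives
\begin{align*}
(\alpha+1)\int_{\hn}\frac{u^2}{r^{\alpha+2}}\dv+\frac{N-1}{4}\int_{\hn}\frac{u^2}{r^{\alpha}}\dv\le\int_{\hn}\frac{u}{r^{\alpha}}\Bigl[-\lr u-\frac{(N+\alpha)(N-\alpha-4)}{4}\frac{u}{r^{2}}\Bigr]\dv,
\end{align*}
and then Young's inequality, optimized at $\epsilon=(N-1)/4$, yields
\begin{align*}
\frac{(N-1)(\alpha+1)}{2}\int_{\hn}\frac{u^2}{r^{\alpha+2}}\dv+\frac{(N-1)^2}{16}\int_{\hn}\frac{u^2}{r^{\alpha}}\dv\le\int_{\hn}r^{-\alpha}\Bigl|\lr u+\frac{(N+\alpha)(N-\alpha-4)}{4}\frac{u}{r^{2}}\Bigr|^2\dv.
\end{align*}
Adding this lower bound to the computation you already have gives exactly the stated inequality: the $r^{-\alpha}$ term appears with constant $\frac{(N-1)^2}{16}$, and the $r^{-\alpha-2}$ coefficient becomes $\frac{(N-1)(N-2-\alpha)(N-2+\alpha)}{8}$ because $(N+\alpha)(N-\alpha-4)+4(\alpha+1)=(N-2)^2-\alpha^2$. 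Without this (or an equivalent) lower bound on the square, your argument proves a strictly weaker inequality than the theorem.
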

 \begin{proof}
 	Replacing the index $\alpha$ by $\alpha-2$ in \eqref{use_lemma_1} and then substituting  \eqref{eq_th_3} into it, we deduce
 	\begin{align*}
 	(\alpha+1)\int_{\hn} \frac{u^2}{r^{\alpha+2}}\dv +\frac{(N-1)}{4}\int_{\hn} \frac{u^2}{r^{\alpha}}\dv \leq \int_{\hn}\frac{u}{r^\alpha}\bigg[-\lr u-\frac{(N+\alpha)(N-\alpha-4)}{4}\frac{u}{r^2}\bigg]\dv.
 	\end{align*}
 	
 	Now we estimate the last term by Young's inequality with $\epsilon>0$ and taking $a=\big|\frac{u}{r^{\alpha/2}}\big|$ and $b=\big|r^{-\alpha/2}\big[-\lr u-\frac{(N+\alpha)(N-\alpha-4)}{4}\frac{u}{r^2}\big]\big|$, we obtain
 	\begin{align*}
 	& 2\epsilon(\alpha+1)\int_{\hn}\frac{u^2}{r^{\alpha+2}} \  {\rm d}v_{\hn}+\frac{\epsilon(N-1-2\epsilon)}{2}\int_{\hn} \frac{u^2}{r^{\alpha}} \ {\rm d}v_{\hn} \\ & \leq  \int_{\hn}r^{-\alpha}|\lr u +\frac{(N+\alpha)(N-\alpha-4)}{4}\frac{u}{r^2}|^2 \ {\rm d}v_{\hn}.
 	\end{align*}
 	
 	Next we exploit the information that, function $f(\epsilon)=\epsilon(N-1-2\epsilon)/2$ attains maximum when $\epsilon=(N-1)/4$. Finally, applying Lemma \ref{lemma_5} and Theorem \ref{th_3}, in the form of changed index $\alpha$ by $\alpha+2$, we achieve our desired result. 
 \end{proof}
 
  We establish Theorem \ref{th_4}, using Theorem \ref{th_3} and Lemma \ref{lemma_5}. On the other hand, in a similar way, we can deduce Corollary \ref{cor_4} using Corollary \ref{cor_1} instead of Theorem \ref{th_3} in the proof of Theorem \ref{th_4}.
 \begin{corollary}\label{cor_4}
 	Let $0\leq  2\alpha\leq N-7$. Then for all $u\in \cchnm$, there holds 
 	\begin{align}\label{eq_cor_4}
 	&\int_{\hn} \frac{\rlt}{r^\alpha}\dv \geq\frac{(N+\alpha)^2(N-4-\alpha)^2}{16}\int_{\hn}\frac{u^2}{r^{\alpha+4}}\dv\\ & \notag + \frac{(N-1)(N-2-\alpha)(N-2+\alpha)}{4}\int_{\hn} \frac{u^2}{r^{\alpha+2}} \, {\rm d}v_{\hn} \notag
 	+\frac{(N-1)^2}{4}\int_{\hn}\frac{u^2}{r^{\alpha}}\dv.
 	\end{align}
 	Moreover, the constant $\frac{(N+\alpha)^2(N-4-\alpha)^2}{16}$ is sharp in the obvious sense.
 \end{corollary}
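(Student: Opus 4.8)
The plan is to mirror the proof of Theorem~\ref{th_4} almost verbatim, replacing the single ingredient that differs. Recall that in Theorem~\ref{th_4} the two tools were Lemma~\ref{lemma_5} (which bounds the weighted Rellich quantity from below by an expression involving $\int r^{-\alpha}|\lr u + c\, u/r^2|^2$ and a Hardy term) and the weighted Hardy inequality of Theorem~\ref{th_3}, which supplied the extra lower-order terms. For Corollary~\ref{cor_4} the only change is to use Corollary~\ref{cor_1} in place of Theorem~\ref{th_3}; since Corollary~\ref{cor_1} has the coefficient $\frac{N-1}{2}$ (rather than $\frac{N-1}{4}$) in front of $\int u^2/r^\alpha$, the resulting constants double accordingly, which is exactly what is reflected in \eqref{eq_cor_4}.

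Concretely, I would proceed as follows. First, take $-2\le\alpha<N-4$ and invoke \eqref{use_lemma_1} with $\alpha$ replaced by $\alpha-2$, obtaining
\begin{align*}
\int_{\hn} \frac{u\lr u}{r^{\alpha}}\dv\leq -\frac{\alpha(N-\alpha-2)}{2}\int_{\hn}\frac{u^2}{r^{\alpha+2}}\dv-\int_{\hn}\frac{|\gr u|^2}{r^{\alpha}}\dv.
\end{align*}
Then substitute the weighted Hardy bound \eqref{eq_cor_1} (valid because $2\alpha\le N-3$, which follows from $2\alpha\le N-7$) for $\int |\gr u|^2/r^\alpha$ on the right-hand side. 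Rearranging gives a lower bound of the shape
\begin{align*}
(\alpha+1)\int_{\hn}\frac{u^2}{r^{\alpha+2}}\dv+\frac{(N-1)}{2}\int_{\hn}\frac{u^2}{r^{\alpha}}\dv\leq\int_{\hn}\frac{u}{r^\alpha}\Big[-\lr u-\frac{(N+\alpha)(N-\alpha-4)}{4}\frac{u}{r^2}\Big]\dv.
\end{align*}
Next, estimate the right-hand side by Young's inequality with parameter $\epsilon>0$, exactly as in Theorem~\ref{th_4}, which produces
\begin{align*}
2\epsilon(\alpha+1)\int_{\hn}\frac{u^2}{r^{\alpha+2}}\dv+\epsilon(N-1-\epsilon)\int_{\hn}\frac{u^2}{r^{\alpha}}\dv\leq\int_{\hn}r^{-\alpha}\Big|\lr u+\frac{(N+\alpha)(N-\alpha-4)}{4}\frac{u}{r^2}\Big|^2\dv,
\end{align*}
where I have kept the $\frac{N-1}{2}$ coefficient (hence $N-1-\epsilon$ rather than $\frac{N-1-2\epsilon}{2}$). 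Optimize over $\epsilon$: the coefficient $\epsilon(N-1-\epsilon)$ of the zeroth-order term is maximized at $\epsilon=(N-1)/2$, giving value $(N-1)^2/4$, which matches the last constant in \eqref{eq_cor_4}. Finally, apply Lemma~\ref{lemma_5} (with $\alpha$ replaced by $\alpha+2$ where appropriate, and using $2(\alpha+2)\le N-3$ i.e. $2\alpha\le N-7$ to control the sign of the cross term) together with Corollary~\ref{cor_1} once more to convert the left-hand side into the stated combination of $\int u^2/r^{\alpha+4}$, $\int u^2/r^{\alpha+2}$, and $\int u^2/r^\alpha$ integrals; collecting coefficients yields \eqref{eq_cor_4}. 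Sharpness of $\frac{(N+\alpha)^2(N-4-\alpha)^2}{16}$ is inherited from the known optimal weighted Rellich constant, as cited for Theorem~\ref{th_4}.

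The main technical point to watch—rather than a genuine obstacle—is bookkeeping of the dimension restrictions: one must check that every application of Corollary~\ref{cor_1} (which needs $0\le 2\beta\le N-3$ for its weight exponent $\beta$) and of Lemma~\ref{lemma_5} (which needs $-2\le\beta<N-4$) is legitimate for the relevant shifted exponents $\alpha$, $\alpha+2$, and that the cross terms coming from Young's inequality and from Lemma~\ref{lemma_5} have the right sign; the hypothesis $0\le 2\alpha\le N-7$ is precisely what makes all of these simultaneously valid. Beyond that, the argument is a routine transcription of the proof of Theorem~\ref{th_4} with the constant $\frac{N-1}{4}$ upgraded to $\frac{N-1}{2}$ throughout.
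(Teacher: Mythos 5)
Your proposal is correct and follows exactly the route the paper intends: the paper itself only remarks that Corollary~\ref{cor_4} is obtained by rerunning the proof of Theorem~\ref{th_4} with Corollary~\ref{cor_1} in place of Theorem~\ref{th_3}, which is precisely what you carry out, including the optimization $\epsilon=(N-1)/2$ giving $(N-1)^2/4$ and the restriction $2(\alpha+2)\leq N-3$ that produces the hypothesis $2\alpha\leq N-7$. The only cosmetic slip is that the index shift $\alpha\mapsto\alpha+2$ applies to the Hardy inequality of Corollary~\ref{cor_1} used to absorb the gradient term from Lemma~\ref{lemma_5}, not to Lemma~\ref{lemma_5} itself, which is used at exponent $\alpha$.
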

\begin{remark}
	If we compare both the weighted Hardy inequalities in Corollary \ref{cor_1} and Theorem \ref{th_3}, one can observe that coefficient in front of $\int_{\hn}u^2/r^\alpha\dv$ in the equation \eqref{eq_cor_1} is better than \eqref{eq_th_3}. But it's also important to notice that, \eqref{eq_cor_1} demands larger dimension restriction than \eqref{eq_th_3}. Analogous observation also holds true for the case of weighted Rellich inequalities in Corollary \ref{cor_4} and Theorem \ref{th_4}. Moreover, for both the cases we are getting one instance, where $\mu_r(\hn)>(N-1)/4$ is possible.
\end{remark}

Iterating inequality \eqref{eq_th_4}, we obtain the following improved weighted Rellich inequality on higher order radial derivation on $\hn$ and this result will be used many times in the last part of the article.
\begin{lemma}\label{lemma_6}
	Let $\beta$ be a positive integer, which satisfy $0\leq \alpha< N-4\beta$. Then there exist positive constants $\Xi_{\alpha,\beta}^j$, for $j=0 \text{ to }2\beta$, such that for all $u\in\cchnm$, there holds 
	\begin{align}\label{eq_lemma_6}
	\int_{\hn} \frac{(\lr^{\beta} u)^2}{r^\alpha}\dv & \geq \sum_{j=0}^{2\beta}\Xi_{\alpha,\beta}^j\int_{\hn}\frac{u^2}{r^{\alpha+4\beta-2j}}\dv.
	\end{align}
	Moreover, the coefficient corresponding to the leading terms namely $\Xi_{\alpha,\beta}^0$ and $\Xi_{\alpha,\beta}^{2\beta}$, for $r\rightarrow 0$ and $r\rightarrow \infty$ respectively, can be explicitly given by as follows
	\begin{equation*}
	\Xi_{\alpha,\beta}^0=\prod_{j=0}^{\beta-1}\frac{(N+(\alpha+4j))^2(N-(\alpha+4j)-4)^2}{16} \text{ and } \  \Xi_{\alpha,\beta}^{2\beta}=\bigg(\frac{N-1}{4}\bigg)^{2\beta} \text{ for }\beta\geq 1 \text{ and }\alpha\geq 0.
	\end{equation*}
\end{lemma}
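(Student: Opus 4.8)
The plan is to prove \eqref{eq_lemma_6} by induction on $\beta$, using Theorem \ref{th_4} as the single-step reduction that strips off one radial Laplacian. For the base case $\beta=1$, \eqref{eq_lemma_6} is precisely \eqref{eq_th_4}; reading off its coefficients gives $\Xi^{0}_{\alpha,1}=\frac{(N+\alpha)^{2}(N-4-\alpha)^{2}}{16}$, $\Xi^{1}_{\alpha,1}=\frac{(N-1)(N-2-\alpha)(N-2+\alpha)}{8}$ and $\Xi^{2}_{\alpha,1}=\frac{(N-1)^{2}}{16}$, all strictly positive since $0\leq\alpha<N-4\beta\leq N-4$, and the asserted closed forms for $\Xi^{0}_{\alpha,1}$ and for $\Xi^{2}_{\alpha,1}=\big(\frac{N-1}{4}\big)^{2}$ are immediate.

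For the inductive step I would assume the statement for $\beta-1$ (for every admissible weight) and fix $0\leq\alpha<N-4\beta$. Set $w:=\lr^{\beta-1}u$; since $u$ is smooth and supported away from $o$, and $\coth r$ is smooth there, one has $w\in\cchnm$ as well. Because $\alpha<N-4\beta\leq N-4$, Theorem \ref{th_4} applied to $w$ yields
\begin{align*}
\int_{\hn}\frac{(\lr^{\beta}u)^{2}}{r^{\alpha}}\dv=\int_{\hn}\frac{(\lr w)^{2}}{r^{\alpha}}\dv\geq c_{0}\int_{\hn}\frac{w^{2}}{r^{\alpha+4}}\dv+c_{1}\int_{\hn}\frac{w^{2}}{r^{\alpha+2}}\dv+c_{2}\int_{\hn}\frac{w^{2}}{r^{\alpha}}\dv,
\end{align*}
with $c_{0},c_{1},c_{2}>0$ the coefficients appearing in \eqref{eq_th_4}. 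Each weight $\gamma\in\{\alpha,\alpha+2,\alpha+4\}$ satisfies $0\leq\gamma\leq\alpha+4<N-4(\beta-1)$, so the inductive hypothesis bounds $\int_{\hn}(\lr^{\beta-1}u)^{2}r^{-\gamma}\dv$ from below by $\sum_{j=0}^{2\beta-2}\Xi^{j}_{\gamma,\beta-1}\int_{\hn}u^{2}r^{-(\gamma+4\beta-4-2j)}\dv$. Substituting these three bounds, the only weights that appear are $r^{-(\alpha+4\beta-2j)}$ for $j=0,\dots,2\beta$: the contribution from the $r^{-\alpha}$ term supplies the exponents $\alpha,\alpha+2,\dots,\alpha+4\beta-4$, the one from the $r^{-(\alpha+2)}$ term supplies $\alpha+2,\dots,\alpha+4\beta-2$, and the one from the $r^{-(\alpha+4)}$ term supplies $\alpha+4,\dots,\alpha+4\beta$. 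Defining $\Xi^{j}_{\alpha,\beta}$ to be the total coefficient in front of $r^{-(\alpha+4\beta-2j)}$, each $\Xi^{j}_{\alpha,\beta}$ is a finite sum of products of positive numbers, and is positive because every target exponent $\alpha+4\beta-2j$ lies in at least one of the three progressions; this proves \eqref{eq_lemma_6}.

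To identify the two extreme coefficients one notes that each endpoint exponent receives exactly one contribution. The exponent $\alpha+4\beta$ arises only from the $r^{-(\alpha+4)}$ block at $j=0$, so by the induction formula
\begin{align*}
\Xi^{0}_{\alpha,\beta}&=c_{0}\,\Xi^{0}_{\alpha+4,\beta-1}=\frac{(N+\alpha)^{2}(N-4-\alpha)^{2}}{16}\prod_{j=0}^{\beta-2}\frac{(N+(\alpha+4+4j))^{2}(N-(\alpha+4+4j)-4)^{2}}{16}\\
&=\prod_{j=0}^{\beta-1}\frac{(N+(\alpha+4j))^{2}(N-(\alpha+4j)-4)^{2}}{16},
\end{align*}
while the exponent $\alpha$ arises only from the $r^{-\alpha}$ block at $j=2\beta-2$, whence $\Xi^{2\beta}_{\alpha,\beta}=c_{2}\,\Xi^{2\beta-2}_{\alpha,\beta-1}=\frac{(N-1)^{2}}{16}\big(\frac{N-1}{4}\big)^{2\beta-2}=\big(\frac{N-1}{4}\big)^{2\beta}$.

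The work is essentially all bookkeeping, and that is where I expect the only real care to be needed: one must check that the three arithmetic progressions of weights fuse into the single progression $\{\alpha,\alpha+2,\dots,\alpha+4\beta\}$ of length $2\beta+1$, that the two endpoint weights receive exactly one contribution each so the product formulas close up, and that the admissibility condition $0\leq\gamma<N-4(\beta-1)$ truly holds at every stage --- which it does precisely because the hypothesis $\alpha<N-4\beta$ is a strict inequality. No analytic ingredient beyond Theorem \ref{th_4} enters.
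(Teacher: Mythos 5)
Your proposal is correct and matches the paper's (only sketched) argument: the paper obtains Lemma \ref{lemma_6} precisely by iterating inequality \eqref{eq_th_4}, which is exactly your induction on $\beta$, and your bookkeeping of the weights $\gamma\in\{\alpha,\alpha+2,\alpha+4\}$, the admissibility check $\alpha+4<N-4(\beta-1)$, and the identification of the two endpoint coefficients $\Xi^{0}_{\alpha,\beta}=c_0\,\Xi^{0}_{\alpha+4,\beta-1}$ and $\Xi^{2\beta}_{\alpha,\beta}=c_2\,\Xi^{2\beta-2}_{\alpha,\beta-1}$ are all accurate. In effect you have supplied the details the paper leaves implicit, with no deviation in method.
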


Finally after iterating \eqref{eq_cor_4}, we deduce the following result but with a different initial condition.
\begin{lemma}\label{lemma_7}
	Let $\beta$ be a positive integer, which satisfy $0\leq 2\alpha\leq N-8\beta+1$. Then there exist positive constants $\zeta_{\alpha,\beta}^j$, for $j=0 \text{ to }2\beta$, such that for all $u\in\cchnm$, there holds 
	\begin{align}\label{eq_lemma_7}
	\int_{\hn} \frac{(\lr^{\beta} u)^2}{r^\alpha}\dv & \geq \sum_{j=0}^{2\beta}\zeta_{\alpha,\beta}^j\int_{\hn}\frac{u^2}{r^{\alpha+4\beta-2j}}\dv,
	\end{align}
	where $\zeta_{\alpha,\beta}^0=\Xi_{\alpha,\beta}^0$ and $\zeta_{\alpha,\beta}^{2\beta}=4^\beta \ \Xi_{\alpha,\beta}^{2\beta}, \text{ for }\beta\geq 1 \text{ and }\alpha\geq 0$.
\end{lemma}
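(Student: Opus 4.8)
The plan is to prove \eqref{eq_lemma_7} by induction on $\beta$, with \eqref{eq_cor_4} serving as the one-step building block, exactly in the spirit of the iteration that produces Lemma \ref{lemma_6} out of \eqref{eq_th_4}. For $\beta=1$ there is nothing to do: the statement is Corollary \ref{cor_4}, the hypothesis $0\le 2\alpha\le N-8\beta+1=N-7$ is precisely the one assumed there, and reading off the coefficients in \eqref{eq_cor_4} gives $\zeta_{\alpha,1}^{0}=\frac{(N+\alpha)^2(N-4-\alpha)^2}{16}=\Xi_{\alpha,1}^{0}$ and $\zeta_{\alpha,1}^{2}=\frac{(N-1)^2}{4}=4\big(\tfrac{N-1}{4}\big)^2=4^{1}\,\Xi_{\alpha,1}^{2}$, as claimed.

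For the inductive step I would fix $\beta\ge 2$, assume the lemma for $\beta-1$, take $u\in\cchnm$ with $0\le 2\alpha\le N-8\beta+1$, and set $w:=\lr^{\beta-1}u$; since $\lr$ is a differential operator with coefficients that are smooth away from $o$, it does not enlarge supports, so $w\in\cchnm$. Because $2\alpha\le N-8\beta+1\le N-7$, Corollary \ref{cor_4} applies to $w$ and bounds $\int_{\hn}(\lr^{\beta}u)^2r^{-\alpha}\dv=\int_{\hn}|\lr w|^2r^{-\alpha}\dv$ from below by $\sum_{i=0}^{2}\zeta_{\alpha,1}^{i}\int_{\hn}(\lr^{\beta-1}u)^2r^{-(\alpha+4-2i)}\dv$. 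For each $i\in\{0,1,2\}$ the shifted exponent obeys $0\le 2(\alpha+4-2i)\le 2\alpha+8\le N-8\beta+9=N-8(\beta-1)+1$, so the inductive hypothesis with parameter $\beta-1$ applies to each of these three integrals. Inserting it and relabelling the resulting powers of $1/r$ by $m=i+j$, which ranges over $0,\dots,2\beta$, produces \eqref{eq_lemma_7} with
\[
\zeta_{\alpha,\beta}^{m}:=\sum_{\substack{0\le i\le 2,\ 0\le j\le 2(\beta-1)\\ i+j=m}}\zeta_{\alpha,1}^{i}\,\zeta_{\alpha+4-2i,\,\beta-1}^{\,j}>0 .
\]

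It then remains to identify the two extreme coefficients. The power $r^{-(\alpha+4\beta)}$ can only be produced by $(i,j)=(0,0)$, whence $\zeta_{\alpha,\beta}^{0}=\zeta_{\alpha,1}^{0}\,\zeta_{\alpha+4,\beta-1}^{0}=\Xi_{\alpha,1}^{0}\,\Xi_{\alpha+4,\beta-1}^{0}$, and peeling the $j=0$ factor off the product defining $\Xi_{\alpha,\beta}^{0}$ in Lemma \ref{lemma_6} shows this equals $\Xi_{\alpha,\beta}^{0}$. Likewise $r^{-\alpha}$ can only arise from $(i,j)=(2,2(\beta-1))$, so $\zeta_{\alpha,\beta}^{2\beta}=\zeta_{\alpha,1}^{2}\,\zeta_{\alpha,\beta-1}^{2(\beta-1)}=4\,\Xi_{\alpha,1}^{2}\cdot 4^{\beta-1}\,\Xi_{\alpha,\beta-1}^{2(\beta-1)}=4^{\beta}\big(\tfrac{N-1}{4}\big)^{2\beta}=4^{\beta}\,\Xi_{\alpha,\beta}^{2\beta}$, which closes the induction.

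I expect the only point needing genuine care to be the bookkeeping rather than any new analytic input. One must check that the single hypothesis $0\le 2\alpha\le N-8\beta+1$ is simultaneously strong enough to make both \eqref{eq_cor_4} and the $(\beta-1)$-step usable at every shifted exponent $\alpha+4-2i$ with $i\in\{0,1,2\}$ — which it is, since the worst case $i=0$ requires exactly $2\alpha+8\le N-8(\beta-1)+1$ — and that the two boundary powers $r^{-(\alpha+4\beta)}$ and $r^{-\alpha}$ are each hit by exactly one chain of choices, so that the closed forms for $\Xi_{\alpha,\beta}^{0}$ and $4^{\beta}\Xi_{\alpha,\beta}^{2\beta}$ are unambiguous; the intermediate coefficients $\zeta_{\alpha,\beta}^{m}$ for $1\le m\le 2\beta-1$ are simply left as the positive sums displayed above.
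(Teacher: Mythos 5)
Your proposal is correct and follows essentially the same route as the paper, which obtains Lemma \ref{lemma_7} precisely by iterating \eqref{eq_cor_4} (the paper leaves the details unstated, exactly the induction you spell out, including the bookkeeping $2(\alpha+4)\le N-8(\beta-1)+1$ and the identification of the extreme coefficients $\zeta_{\alpha,\beta}^{0}=\Xi_{\alpha,\beta}^{0}$ and $\zeta_{\alpha,\beta}^{2\beta}=4^{\beta}\Xi_{\alpha,\beta}^{2\beta}$). No gaps noted.
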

In the rest of the article for notational convention we will be assuming 
$\Xi_{n,0}^{0}=1$ and $\zeta_{n,0}^{0}=1$, for every integer $n$. Also we will assume $\sum_{j=m}^{n}=0$ and $\prod_{j=m}^{n}=1$, whenever integers satisfy $n<m$.

\medskip

\section{Improvement of Higher order radial Poincar\'e Inequality}\label{sect_improve}

This section is devoted to the proof of \eqref{imp_r_high_poin}. In the same spirit to explore further in l.h.s. of \eqref{r21d}, exploiting  \eqref{eq_th_3} for the case of $\alpha=2$ into it, we deduce with a different constant than \cite[Theorem 2.1]{EG} that, for all $u\in\cchnm$ and $N\geq 5$, there holds,
\begin{align}\label{r21}
 \int_{\hn} |\Delta_{r} u|^2  \ {\rm d}v_{\hn} -  \left( \frac{N-1}{2} \right)^{2} \int_{\hn} |\nabla_{r} u|^2  \ {\rm d}v_{\hn}
  \geq \frac{(N-4)^2}{16} \int_{\hn} \frac{u^2}{r^{4}} \ {\rm d}v_{\hn}+\frac{(N-1)}{16}\int_{\hn} \frac{u^2}{r^{2}} \ {\rm d}v_{\hn}.
\end{align}

Furthermore, using \eqref{r10} in \eqref{r21}, we obtain for all $u\in\cchnm$ with $N\geq 5$ there holds 
\begin{align}\label{r20}
\int_{\hn} |\Delta_{r} u|^2\ {\rm d}v_{\hn} -  \left( \frac{N-1}{2} \right)^{4} \int_{\hn} u^2 \ {\rm d}v_{\hn}
\geq \frac{(N-4)^2}{16} \int_{\hn} \frac{u^2}{r^{4}} \ {\rm d}v_{\hn}+\frac{N(N-1)}{16}\int_{\hn} \frac{u^2}{r^{2}} \ {\rm d}v_{\hn}.
\end{align}

Indeed, all these lower order improvements can be lifted into the general higher order indices scenario. In particular, applying these lower order indices results and induction we will approach towards the development of the result \eqref{imp_r_high_poin}. In the coming part, we will mainly rely on the Lemma \ref{lemma_6} and Lemma \ref{lemma_7}. We divide this section into a couple of subsections to cover up all the possible higher order indices $k,l$ and side by side we will explicitly calculate the coefficients corresponding to the asymptotic terms $r\rightarrow 0$ and $r\rightarrow\infty$ also. 

\subsection{General integer $k$ and $l=0$} 
This part is divided into two steps based on the situation $k$ is odd or even. First we state the results and after that we will give the details of the proof.
\begin{theorem}
	Let $k$ be a positive integer and $N>2k$. Then there exist $k$ positive constants $C_{k,0}^i$ such that the following inequality holds 
	\begin{equation}\label{even_and_odd}
	\int_{\hn} |\gr^k u|^2\dv - \bigg(\frac{N-1}{2}\bigg)^{2k}\int_{\hn}u^2\dv
	\geq \sum_{i=1}^{k}C_{k,0}^{i}\int_{\hn}\frac{u^2}{r^{2i}}\dv,
	\end{equation}
	for all $u\in\cchnm$. Moreover, the leading terms are explicitly given by
	\begin{equation*}
	C_{k,0}^k=
	\begin{dcases}
	\bigg(\frac{N-4}{2^{2m}}\bigg)^2\ \prod_{j=1}^{m-1}(N+4j)^2(N-4j-4)^2 & \text{ if } k=2m, \\
	 	\frac{1}{2^{4m+2}}\prod_{j=1}^{m}(N+4j-2)^2(N-4j-2)^2  & \text{ if } k=2m+1, \\
	 	\end{dcases}
	\end{equation*} and
	\begin{equation*}
	C_{k,0}^1=
	\begin{dcases}
	\frac{N(N-1)}{2^{4m}}\sum_{j=1}^{m}(N-1)^{4m-2j-2} & \text{ if } k=2m, \\
\frac{N(N-1)}{2^{4m+2}}\sum_{j=1}^{m}(N-1)^{2m+2j-2} +\frac{(N-1)^{2m}}{2^{4m+2}} & \text{ if } k=2m+1. \\
	\end{dcases}
	\end{equation*}
\end{theorem}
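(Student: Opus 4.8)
The plan is to prove \eqref{even_and_odd} by induction on $k$, treating the even and odd cases in tandem, using the two base improvements \eqref{r20} and \eqref{r21} together with the iterated weighted Rellich inequality of Lemma~\ref{lemma_6} as the engine. The key structural observation is that $|\gr^k u|^2 = |\gr^{k-2}(\lr u)|^2$ (for $k\geq 2$), so one can peel off one power of $\lr$ and apply the induction hypothesis to the function $\lr u$ in place of $u$ — but this replaces $\int u^2$-type terms by $\int (\lr u)^2/r^{2i}$-type terms, and it is precisely Lemma~\ref{lemma_6} (with $\beta$ chosen appropriately, and $\alpha$ running over the admissible even integers) that converts those back into pure Hardy weights $\int u^2/r^{2j}$, keeping track of positivity of all coefficients via the condition $N>2k$.

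Concretely, I would first dispose of the small cases: $k=1$ is \eqref{r10} with no remainder needed (or rather, there is nothing to prove beyond the $k=1,l=0$ statement — actually $C_{1,0}^1$ should be read off from \eqref{r10}, which gives $\tfrac14$; one checks the stated formula with $m=0$ gives $(N-1)^0/2^2 = \tfrac14$, consistent), $k=2$ is exactly \eqref{r20}, and $k=3$ follows by combining $\int|\gr^3 u|^2 = \int|\gr(\lr u)|^2 \geq \big(\tfrac{N-1}{2}\big)^2\int|\lr u|^2$ — wait, this is the wrong direction; instead use $\int |\gr^3 u|^2 \dv = \int |\lr u|_{\text{applied once more}}$... more carefully: write $\gr^3 u = \gr(\lr u)$, apply \eqref{r10}-type bound in reverse is not available, so instead I would use $\int|\gr^3u|^2 = -\int (\lr u)(\lr^2 u)$ via Lemma~\ref{lemma_1} and then proceed as in the proof of \eqref{use_2}, or more cleanly observe $\int |\gr^3 u|^2\dv = \int |\gr (\lr u)|^2 \dv$ and apply the $k=1$ improvement \eqref{r10} to $\lr u$, then feed in Lemma~\ref{lemma_6} with $\beta=1$, $\alpha=0$ to handle the resulting $\int(\lr u)^2$. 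This is the prototype of the inductive step.

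For the inductive step proper, assume \eqref{even_and_odd} for all indices below $k$. Writing $k = 2m$ or $k=2m+1$, I would apply the induction hypothesis at index $k-2$ to the function $\lr u$:
\begin{equation*}
\int_{\hn} |\gr^{k} u|^2\dv = \int_{\hn} |\gr^{k-2}(\lr u)|^2\dv \geq \Big(\tfrac{N-1}{2}\Big)^{2(k-2)}\!\!\int_{\hn}|\lr u|^2\dv + \sum_{i=1}^{k-2} C_{k-2,0}^i \int_{\hn}\frac{(\lr u)^2}{r^{2i}}\dv.
\end{equation*}
Then for each term I apply Lemma~\ref{lemma_6} with $\beta=1$ and $\alpha = 2i$ (the hypothesis $N > 2k$ guarantees $2i < N-4$ for $i \leq k-2$, so the lemma applies and all $\Xi^j_{2i,1}$ are positive), which yields
$\int (\lr u)^2/r^{2i}\dv \geq \Xi^0_{2i,1}\int u^2/r^{2i+4}\dv + \Xi^1_{2i,1}\int u^2/r^{2i+2}\dv + \Xi^2_{2i,1}\int u^2/r^{2i}\dv$, using the explicit $\Xi^0_{2i,1} = \tfrac{(N+2i)^2(N-2i-4)^2}{16}$ and $\Xi^2_{2i,1} = \big(\tfrac{N-1}{4}\big)^2$ from the lemma. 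Collecting powers of $1/r$, reindexing, and separately bounding the $\big(\tfrac{N-1}{2}\big)^{2(k-2)}\int|\lr u|^2$ term by \eqref{use_2} followed by $\eqref{r20}$ applied once more (to extract both the $\big(\tfrac{N-1}{2}\big)^{2k}\int u^2$ piece and an extra Hardy contribution), I obtain \eqref{even_and_odd} at level $k$ with all coefficients manifestly nonnegative, and with $C^k_{k,0}$ and $C^1_{k,0}$ satisfying one-step recursions that telescope into the claimed product/sum formulas.

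\textbf{Main obstacle.} The genuine difficulty is bookkeeping: tracking which weight $1/r^{2j}$ receives contributions from which term, and verifying the closed-form expressions for the two extreme coefficients $C^k_{k,0}$ (the $r\to 0$ leading term, governed entirely by the products $\Xi^0$, hence by the $\tfrac{(N+4j)^2(N-4j-4)^2}{16}$ factors for even $k$ and the shifted analogues for odd $k$) and $C^1_{k,0}$ (the $r\to\infty$ leading term, a \emph{sum} because every iteration's $\big(\tfrac{N-1}{4}\big)^2 = \Xi^2$ factor contributes a separate term carrying the residual $\tfrac{N(N-1)}{\text{power of }2}$ from the base case \eqref{r20}). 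Separating cleanly into the odd and even subcases so that the recursions close, and confirming positivity of every intermediate coefficient under the single hypothesis $N>2k$ (rather than the stronger $N\geq 4k-1$ needed elsewhere), is where the care is required; the analytic content is entirely contained in the already-proved Lemmas~\ref{lemma_6}, \ref{lemma_1} and the base inequalities \eqref{r10}, \eqref{r20}, \eqref{r21}.
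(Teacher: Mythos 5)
Your proposal follows essentially the same route as the paper's proof: induct in steps of two by writing $\gr^{k}u=\gr^{k-2}(\lr u)$, apply the hypothesis at level $k-2$ to $\lr u$, recover the Poincar\'e term through \eqref{r20}, and convert the weighted terms $\int_{\hn}(\lr u)^2/r^{2i}\dv$ via Lemma~\ref{lemma_6} with $\beta=1$, $\alpha=2i$ (positivity guaranteed by $N>2k$), so that the recursions for $C^{k}_{k,0}$ and $C^{1}_{k,0}$ telescope into the stated product and sum formulas. The only slip is in your $k=3$ prototype: to handle $\int_{\hn}(\lr u)^2\dv$ you must use \eqref{r20} (as you correctly do in the general step), not Lemma~\ref{lemma_6} with $\alpha=0$, whose constant $\big(\tfrac{N-1}{4}\big)^{2}$ would lose the sharp Poincar\'e factor $\big(\tfrac{N-1}{2}\big)^{4}$.
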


\begin{proof}
	Suppose $k=2m$ even, we will apply induction $m$. For the basic step we already have the result in \eqref{r20}. Now assume it holds true for the case $k=2m-2\geq 2$, which describes that for $N>4m-4$, there holds
	\begin{align*}
	& \int_{\hn} (\lr^{m-1} u)^2\dv - \bigg(\frac{N-1}{2}\bigg)^{4m-4}\int_{\hn}u^2\dv
	\geq \frac{N(N-1)}{2^{4m-4}}\sum_{j=1}^{m-1}(N-1)^{4m-2j-6}\int_{\hn}\frac{u^2}{r^{2}}\ {\rm d}v_{\hn} \\ & + \sum_{i=2}^{2m-3}C_{2m-2,0}^{i}\int_{\hn}\frac{u^2}{r^{2i}}\dv +\bigg(\frac{N-4}{2^{2m-2}}\bigg)^2\ \prod_{j=1}^{m-2}(N+4j)^2(N-4j-4)^2\int_{\hn}\frac{u^2}{r^{4m-4}}\ {\rm d}v_{\hn}.
	\end{align*}
	
	Next we will establish the inductive step and so starting with $N>4m$, exploiting induction hypothesis above, \eqref{r20} and \eqref{eq_lemma_6}, we deduce
	\begin{align*}
	& \int_{\hn}|\lr^{m} u|^2 \ {\rm d}v_{\hn} =\int_{\hn}|\lr^{m-1}(\lr u)|^2 \ {\rm d}v_{\hn} \geq \bigg(\frac{N-1}{2}\bigg)^{4m-4}\int_{\hn}(\lr u)^2 \ {\rm d}v_{\hn}\\ & +\sum_{i=2}^{2m-3}C_{2m-2,0}^{i}\int_{\hn}\frac{(\lr u)^2}{r^{2i}} \ {\rm d}v_{\hn}+  \frac{N(N-1)}{2^{4m-4}}\sum_{j=1}^{m-1}(N-1)^{4m-2j-6}\int_{\hn}\frac{(\lr u)^2}{r^{2}} \ {\rm d}v_{\hn}\\& +\bigg(\frac{N-4}{2^{2m-2}}\bigg)^2\ \prod_{j=1}^{m-2}(N+4j)^2(N-4j-4)^2\int_{\hn}\frac{(\lr u)^2}{r^{4m-4}} \ {\rm d}v_{\hn}\\ & \geq \bigg(\frac{N-1}{2}\bigg)^{4m-4}\bigg[\left( \frac{N-1}{2} \right)^{4} \int_{\hn} u^2 \ {\rm d}v_{\hn} 
	+ \frac{(N-4)^2}{16} \int_{\hn} \frac{u^2}{r^{4}} \ {\rm d}v_{\hn}+\frac{N(N-1)}{16}\int_{\hn} \frac{u^2}{r^{2}} \ {\rm d}v_{\hn}\bigg]\\ & + \sum_{i=2}^{2m-3}C_{2m-2,0}^{i}\int_{\hn}\frac{(\lr u)^2}{r^{2i}}\dv+ \frac{N(N-1)}{2^{4m-4}}\sum_{j=1}^{m-1}(N-1)^{4m-2j-6} \bigg[\sum_{\gamma=0}^{2}\Xi_{2,1}^{\gamma}\int_{\hn}\frac{u^2}{r^{6-2\gamma}}\dv\bigg]
	\\ &+ \bigg(\frac{N-4}{2^{2m-2}}\bigg)^2\ \prod_{j=1}^{m-2}(N+4j)^2(N-4j-4)^2\bigg[\sum_{\gamma=0}^{2}\Xi_{4m-4,1}^{\gamma}\int_{\hn}\frac{u^2}{r^{4m-2\gamma}}\dv\bigg]\\&= \bigg(\frac{N-1}{2}\bigg)^{4m}\int_{\hn}u^2\dv + \sum_{i=1}^{2m} C_{2m,0}^{i}\int_{\hn}\frac{u^2}{r^{2i}}\dv.
	\end{align*}
	
	 Substituting the value of $\Xi_{2,1}^2$ and in the end changing index from $j$ to $j-1$, we obtain
	 \begin{align*}
	 C_{2m,0}^1&=\frac{N(N-1)}{16}\bigg(\frac{N-1}{2}\bigg)^{4m-4}+\frac{N(N-1)}{2^{4m-4}}\sum_{j=1}^{m-1}(N-1)^{4m-2j-6} \ \Xi_{2,1}^{2}\\&= \frac{N(N-1)}{2^{4m}}\sum_{j=0}^{m-1}(N-1)^{4m-2j-4}=\frac{N(N-1)}{2^{4m}}\sum_{j=1}^{m}(N-1)^{4m-2j-2}
	 \end{align*}
	 and finally arranging the terms after plugging in the value of $\Xi_{4m-4,1}^{0}$, we deduce
	 \begin{align*}
	 C_{2m,0}^{2m}  =\bigg(\frac{N-4}{2^{2m-2}}\bigg)^2\ \prod_{j=1}^{m-2}(N+4j)^2(N-4j-4)^2 \ \Xi_{4m-4,1}^{0} =\bigg(\frac{N-4}{2^{2m}}\bigg)^2\ \prod_{j=1}^{m-1}(N+4j)^2(N-4j-4)^2.
	 \end{align*}
	 	 
	 This gives that inequality holds for $k=2m$ and completes the induction.
	 
	  Next we turn to the case $k=2m+1$ odd with the same idea to argue by induction on $m$. Notice, if $m=0$, \eqref{even_and_odd} follows directly from \eqref{r10} with $C_{1,0}^1=1/4$. Next in a similar manner, assuming result is true for $k=2m-1\geq 1$, we can extend it for the case $k=2m+1$, by applying Lemma \ref{lemma_6} and \eqref{r20} suitably. For the brefity we are skipping the details.
	\end{proof}

\begin{remark}
	Using Corollary \ref{cor_1} for $\alpha=2$ in \eqref{r21d} we deduce that for $u\in\cchnm$, with $N\geq 7$ there holds  
	\begin{align}\label{dr21}
	\int_{\hn} |\Delta_{r} u|^2 \ {\rm d}v_{\hn} -   \left( \frac{N-1}{2} \right)^{2} \int_{\hn} |\nabla_{r} u|^2  \ {\rm d}v_{\hn}
	\geq \frac{(N-4)^2}{16} \int_{\hn} \frac{u^2}{r^{4}} \ {\rm d}v_{\hn}+\frac{(N-1)}{8}\int_{\hn} \frac{u^2}{r^{2}} \ {\rm d}v_{\hn}.
	\end{align}
\end{remark}

\begin{remark}
	Exploiting \eqref{r10} in the above inequality \eqref{dr21}, we deduce for all $u\in\cchnm$, with $N\geq 7$ there holds
	\begin{align}\label{dr20}
	\int_{\hn} |\Delta_{r} u|^2 \ {\rm d}v_{\hn} -  \left( \frac{N-1}{2} \right)^{4} \int_{\hn} u^2  \ {\rm d}v_{\hn}  
	\geq \frac{(N-4)^2}{16} \int_{\hn} \frac{u^2}{r^{4}} \ {\rm d}v_{\hn}+\frac{(N^2-1)}{16}\int_{\hn} \frac{u^2}{r^{2}}  \ {\rm d}v_{\hn}.
	\end{align}
	If we compare \eqref{dr21} and \eqref{dr20} with the inequalities \eqref{r21} and \eqref{r20} respectively, then it is easy to observe that inequalities in the first case perform better when $r$ approaching towards zero. In particular, this creates another interesting fact that if we compare \eqref{dr20}, after applying \cite[Lemma 6.1]{EGR} suitably, with \cite[Theorem 2.3]{EGR} in the manifold $M=\hn$ with $N\geq 7$, then the constant appearing in front of the Hardy term $\frac{1}{r^2}$ can be larger than $\frac{(N-1)^2}{16}$ as proved in \cite{EGR}, keeping the constant in front of Rellich term unchanged. Also, we notice that unfortunately in both cases finding the best possible constant is still an open question.
\end{remark}

If we use above inequality \eqref{dr20}, Lemma \ref{lemma_7} and \eqref{r10}, then we will be obtaining the following corollary, where the constants are larger than \eqref{even_and_odd} but demand more dimensional restriction.

\begin{corollary}\label{cor_4.1}
		Let $k$ be a positive integer and $N\geq 4k-1$. Then there exist $k$ positive constants $D_{k,0}^i$ such that the following inequality holds
		\begin{equation}\label{deven_and_dodd}
		\int_{\hn} |\gr^k u|^2\dv - \bigg(\frac{N-1}{2}\bigg)^{2k}\int_{\hn}u^2\dv
		\geq \sum_{i=1}^{k}D_{k,0}^{i}\int_{\hn}\frac{u^2}{r^{2i}}\dv,
		\end{equation}
		for all $u\in\cchnm$. Moreover, the leading terms are given by $D_{k,0}^k= C_{k,0}^{k}$ and 
		\begin{equation*}
		D_{k,0}^1=
		\begin{dcases}
		\frac{(N^2-1)}{16}\sum_{j=1}^{m}\bigg(\frac{N-1}{2}\bigg)^{4m-2j-2} & \text{ if } k=2m, \\
		(N^2-1)\sum_{j=1}^{m}\frac{(N-1)^{2m+2j-2}}{2^{2m+2j+2}} +\frac{(N-1)^{2m}}{2^{2m+2}}  & \text{ if } k=2m+1. \\
		\end{dcases}
		\end{equation*}
\end{corollary}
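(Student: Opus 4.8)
The plan is to mirror the proof of the preceding Theorem on $C_{k,0}^i$ almost verbatim, replacing the three building-block inequalities by their ``larger-constant'' counterparts. Concretely, the previous proof rested on the triple: the second-order improvement \eqref{r20}, the iterated Rellich Lemma \ref{lemma_6}, and the first-order radial Poincar\'e--Hardy inequality \eqref{r10}. The stronger versions available under the heavier dimension restriction are: the improved second-order inequality \eqref{dr20} (valid for $N\ge 7$), the iterated Rellich Lemma \ref{lemma_7} (valid for $0\le 2\alpha\le N-8\beta+1$, with $\zeta^{2\beta}_{\alpha,\beta}=4^\beta\,\Xi^{2\beta}_{\alpha,\beta}$), and again \eqref{r10}. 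So first I would fix notation, write $k=2m$ or $k=2m+1$, and check that the hypothesis $N\ge 4k-1$ is exactly what is needed to invoke Lemma \ref{lemma_7} at every stage of the induction: at the outermost application we need $\beta=m$ (or $m$) and $\alpha$ as large as $\approx 4m-4$, and $0\le 2(4m-4)\le N-8+1$ forces $N\ge 8m-1$... so in fact one should be careful and track that the $\alpha$ appearing in each Rellich iteration is small, namely $\alpha\in\{2, 4m-4\}$ type exponents, and re-derive the precise inequality $N\ge 4k-1$ the author claims; this bookkeeping is the one genuinely fiddly point.

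Next I would run the induction on $m$ for the even case $k=2m$. Base case $m=1$ is exactly \eqref{dr20}, which gives $D_{2,0}^2=\frac{(N-4)^2}{16}=C_{2,0}^2$ and $D_{2,0}^1=\frac{N^2-1}{16}$, matching the claimed formula with the sum having a single term $j=1$. For the inductive step, assume \eqref{deven_and_dodd} for $k=2m-2$, apply it to $\lr u$ in place of $u$ (legitimate since $\lr u\in\cchnm$), then bound $\int_{\hn}(\lr u)^2\dv$ below using $\left(\frac{N-1}{2}\right)^{4}\int u^2 + \text{(Hardy terms from \eqref{dr20})}$, bound each $\int_{\hn}\frac{(\lr u)^2}{r^{2i}}\dv$ below by Lemma \ref{lemma_7} with $\beta=1$, and collect the coefficients of each power $r^{-2i}$. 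The leading coefficient $D_{2m,0}^{2m}$ picks up only the factor $\Xi^0_{4m-4,1}$ (which equals $\zeta^0_{4m-4,1}$), so it propagates identically to the previous theorem, giving $D^{2m}_{2m,0}=C^{2m}_{2m,0}$; the coefficient $D^1_{2m,0}$ of $r^{-2}$ accumulates the contribution $\frac{N^2-1}{16}\left(\frac{N-1}{2}\right)^{4m-4}$ from \eqref{dr20} plus $\zeta^{2}_{2,1}=4\,\Xi^2_{2,1}=4\left(\frac{N-1}{4}\right)^2$ times the old $r^{-2}$ coefficient of the $(2m-2)$-level, which telescopes to the stated geometric sum $\frac{N^2-1}{16}\sum_{j=1}^{m}\left(\frac{N-1}{2}\right)^{4m-2j-2}$ after shifting the summation index.

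The odd case $k=2m+1$ is handled the same way, with base case $m=0$ being just \eqref{r10} (so $D^1_{1,0}=\frac14=\frac{(N-1)^0}{2^2}$, consistent with the formula), and the inductive step passing from $k=2m-1$ to $k=2m+1$ by two applications of $\lr$, or equivalently by using the even result $k=2m$ already proved together with one more radial Poincar\'e step $\int|\gr^{k}u|^2\ge\left(\frac{N-1}{2}\right)^{2}\int|\lr^{m}u|^2$ from \eqref{use_2}; I would then feed \eqref{dr20} and Lemma \ref{lemma_7} into $\int|\lr^m u|^2$ exactly as above. The positivity of all $D^i_{k,0}$ is automatic since every ingredient has positive coefficients under the stated dimension hypothesis. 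I expect the only real obstacle to be the index/dimension bookkeeping: one must verify that every invocation of Lemma \ref{lemma_7} in the chain satisfies its hypothesis $0\le 2\alpha\le N-8\beta+1$, and that the minimal such $N$ over all invocations is precisely $4k-1$; everything else is a routine repetition of the argument used for the $C^i_{k,0}$ theorem, with the single substitution $\Xi\rightsquigarrow\zeta$ at the top level producing the extra factors of $4$ that make $D^1_{k,0}>C^1_{k,0}$ while leaving the $r\to0$ leading term unchanged.
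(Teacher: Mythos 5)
Your overall plan---rerun the induction behind \eqref{even_and_odd} with \eqref{r20} replaced by \eqref{dr20} and Lemma \ref{lemma_6} replaced by Lemma \ref{lemma_7}, keeping \eqref{r10}---is exactly what the paper indicates for this corollary, and your even case is correct: the recursion $D^1_{2m,0}=\frac{N^2-1}{16}\big(\frac{N-1}{2}\big)^{4m-4}+\zeta^2_{2,1}\,D^1_{2m-2,0}$ with $\zeta^2_{2,1}=\frac{(N-1)^2}{4}$ telescopes to the stated sum, $\zeta^0_{\alpha,1}=\Xi^0_{\alpha,1}$ propagates $D^{2m}_{2m,0}=C^{2m}_{2m,0}$, and the bookkeeping you flagged does close: the binding requirement in the whole chain is Lemma \ref{lemma_7} with $\beta=1$ and $\alpha=2(k-2)$ (the top Hardy weight inherited from level $k-2$), i.e.\ $4(k-2)\le N-7$, which is precisely $N\ge 4k-1$ in both parities.

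The gap is in the odd step as you actually spell it out. Passing through \eqref{use_2}, which carries no remainder, and then invoking the already-proved even case for $\int_{\hn}|\lr^m u|^2\dv$ cannot prove the statement: the even-level bound has top Hardy weight $r^{-4m}=r^{-2(k-1)}$, so no $r^{-2k}$ term appears at all and $D^k_{k,0}=C^k_{k,0}$ is unobtainable; moreover the $r^{-2}$ coefficient produced this way is $\big(\frac{N-1}{2}\big)^2 D^1_{2m,0}$, strictly smaller than the claimed $D^1_{2m+1,0}$ --- the missing summand $\frac{(N-1)^{2m}}{2^{2m+2}}$ in the stated formula equals $\frac14\,\zeta^{2m}_{2,m}$, which already tells you what the correct step must be. Either run the induction verbatim as in the even case, writing $\gr^{2m+1}u=\gr^{2m-1}(\lr u)$, applying the level-$(2m-1)$ inequality to $\lr u$ and then \eqref{dr20} together with Lemma \ref{lemma_7} with $\beta=1$, $\alpha=2i$, $i\le 2m-1$ (admissible exactly when $N\ge 8m+3=4k-1$); or replace \eqref{use_2} by \eqref{r10} applied to $\lr^m u$, keep its remainder $\frac14\int_{\hn}(\lr^m u)^2 r^{-2}\dv$, and treat it with Lemma \ref{lemma_7} with $\alpha=2$, $\beta=m$ (again admissible exactly when $N\ge 4k-1$): its $\zeta^{0}_{2,m}$ term furnishes the $r^{-2k}$ coefficient $\frac14\zeta^{0}_{2,m}=C^k_{k,0}$ and its $\zeta^{2m}_{2,m}$ term the missing piece of $D^1_{k,0}$. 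With either repair the stated constants come out exactly; as written, your odd case proves a weaker inequality than the one claimed.
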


\subsection{General case $k=2m$ even and $l=2h$ even} 
\begin{theorem}
	Let $k=2m>l=2h\geq 0$ be integers and $N>2k$. Then there exist $k$ positive constants $C_{k,l}^i$ such that for all $u\in \cchnm$, there holds
	\begin{equation}\label{even_even}
	\int_{\hn} (\lr^m u)^2\dv - \bigg(\frac{N-1}{2}\bigg)^{4(m-h)}\int_{\hn}(\lr^h u)^2\dv
	\geq \sum_{i=1}^{k}C_{k,l}^{i}\int_{\hn}\frac{u^2}{r^{2i}}\dv,
	\end{equation}
	where $C_{k,l}^{k}=C_{k-l,0}^{k-l} \ \Xi_{2(k-l),l/2}^0$ and $C_{k,l}^1=C_{k-l,0}^{1} \ \Xi_{2,l/2}^l$.
\end{theorem}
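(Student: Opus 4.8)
The plan is to obtain \eqref{even_even} from the already established case $l=0$ in \eqref{even_and_odd}: first strip the iterated operator $\lr^{h}$ off the right-hand comparison term, and then convert the weighted Rellich remainders this produces back into genuine Hardy weights on $u$ by invoking Lemma \ref{lemma_6}. If $h=0$ there is nothing to prove, since \eqref{even_even} is then identical to \eqref{even_and_odd} with $k=2m$; so I assume $h\geq 1$ from now on.

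First I would set $w:=\lr^{h}u$. Since $u\in\cchnm$ vanishes near the pole $o$, the local operator $\lr^{h}$ produces a smooth, compactly supported function also vanishing near $o$, so $w\in\cchnm$. Because $k-l=2(m-h)<k$ and $N>2k>2(k-l)$, I may apply \eqref{even_and_odd} with $(k,l)$ replaced by $(k-l,0)$ to $w$; using $\lr^{m-h}w=\gr^{k}u$ and $\big(\tfrac{N-1}{2}\big)^{2(k-l)}=\big(\tfrac{N-1}{2}\big)^{4(m-h)}$, this gives
\begin{equation*}
\int_{\hn}|\gr^{k}u|^{2}\dv-\Big(\tfrac{N-1}{2}\Big)^{4(m-h)}\int_{\hn}(\lr^{h}u)^{2}\dv\ \geq\ \sum_{i=1}^{k-l}C_{k-l,0}^{\,i}\int_{\hn}\frac{(\lr^{h}u)^{2}}{r^{2i}}\dv.
\end{equation*}
Next, for each $i$ with $1\leq i\leq k-l$ I would apply Lemma \ref{lemma_6} with $\beta=h$ and $\alpha=2i$; its hypothesis $0\leq 2i<N-4h$ holds since $2i+4h\leq 2(k-l)+4h=2k<N$. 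Substituting the resulting bound $\int_{\hn}(\lr^{h}u)^{2}/r^{2i}\dv\geq\sum_{j=0}^{2h}\Xi_{2i,h}^{\,j}\int_{\hn}u^{2}/r^{2i+4h-2j}\dv$ into the display above yields
\begin{equation*}
\int_{\hn}|\gr^{k}u|^{2}\dv-\Big(\tfrac{N-1}{2}\Big)^{4(m-h)}\int_{\hn}(\lr^{h}u)^{2}\dv\ \geq\ \sum_{i=1}^{k-l}\sum_{j=0}^{2h}C_{k-l,0}^{\,i}\,\Xi_{2i,h}^{\,j}\int_{\hn}\frac{u^{2}}{r^{2i+4h-2j}}\dv.
\end{equation*}

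Finally I would regroup the double sum by the value $p:=i+2h-j$, so that the exponent is $r^{-2p}$. As $(i,j)$ ranges over $1\leq i\leq k-l$, $0\leq j\leq 2h$, one checks that $p$ takes every integer value in $\{1,\dots,k\}$, each realized by at least one admissible pair, and that the extreme values $p=1$ and $p=k$ are each realized by exactly one pair, namely $(i,j)=(1,2h)$ and $(i,j)=(k-l,0)$. Setting $C_{k,l}^{\,p}:=\sum_{i+2h-j=p}C_{k-l,0}^{\,i}\,\Xi_{2i,h}^{\,j}$ then delivers \eqref{even_even} with every $C_{k,l}^{\,p}>0$, each summand being positive by \eqref{even_and_odd} and Lemma \ref{lemma_6}; isolating the two singleton extreme terms gives $C_{k,l}^{k}=C_{k-l,0}^{\,k-l}\,\Xi_{2(k-l),l/2}^{0}$ and $C_{k,l}^{1}=C_{k-l,0}^{\,1}\,\Xi_{2,l/2}^{\,l}$, using $h=l/2$. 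I expect the only delicate point to be this last combinatorial bookkeeping — confirming that every aggregated coefficient is strictly positive and that the two leading ones collapse to a single summand — whereas the reduction to $l=0$ and the Rellich expansion are routine once the dimension inequality $N>2k$ is propagated correctly through the substitutions.
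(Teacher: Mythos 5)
Your proposal is correct and follows essentially the same route as the paper: apply the $l=0$ case \eqref{even_and_odd} to $\Delta_r^{h}u$ and then expand each weighted Rellich remainder $\int_{\hn}(\Delta_r^{h}u)^2/r^{2i}\,{\rm d}v_{\hn}$ via Lemma \ref{lemma_6} with $\alpha=2i$, $\beta=h$, regrouping powers of $r$. Your bookkeeping of the exponents (including the dimension check $2i+4h\leq 2k<N$ and the identification of the two singleton extreme coefficients) matches the paper's computation of $C_{k,l}^{1}$ and $C_{k,l}^{k}$.
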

\begin{proof}
	By applying first \eqref{even_and_odd} with $k=2(m-h)$, then \eqref{lemma_6} with $\alpha=2i$ and $\beta=h$, we deduce
	\begin{align*}
	& \int_{\hn}(\lr^m u)^2\dv=\int_{\hn}(\lr^{m-h}(\lr^h u) )^2\dv\\ &\geq \bigg(\frac{N-1}{2}\bigg)^{4(m-h)}\int_{\hn}(\lr^h u)^2\dv
	+ \sum_{i=1}^{2(m-h)}C_{2(m-h),0}^{i}\int_{\hn}\frac{(\lr^h u)^2}{r^{2i}}\dv\\& \geq
	\bigg(\frac{N-1}{2}\bigg)^{4(m-h)}\int_{\hn}(\lr^h u)^2\dv + \sum_{i=1}^{2(m-h)}C_{2(m-h),0}^{i}	\bigg[\sum_{j=0}^{2h}\Xi_{2i,h}^j\int_{\hn}\frac{u^2}{r^{2i+4h-2j}}\dv\bigg]\\& =\bigg(\frac{N-1}{2}\bigg)^{4(m-h)}\int_{\hn}(\lr^h u)^2\dv+\sum_{i=1}^{2m}C_{2m,2h}^{i}\int_{\hn}\frac{u^2}{r^{2i}}\dv
	\end{align*}
	
	with $C_{2m,2h}^1=C_{2(m-h),0}^1 \ \Xi_{2,h}^{2h}$ and $C_{2m,2h}^{2m}=C_{2(m-h),0}^{2(m-h)} \ \Xi_{4(m-h),h}^{0}$.
	\end{proof}

\subsection{General case $k=2m+1$ odd and $l=2h$ even}
\begin{theorem}
	Let $k=2m+1>l=2h\geq 0$ be integers and $N> 2k$. Then there exist $k$ positive constants $C_{k,l}^i$ such that for all $u\in \cchnm$, there holds
	\begin{equation}\label{odd_even}
	\int_{\hn} |\gr(\lr^m u)|^2\dv - \bigg(\frac{N-1}{2}\bigg)^{4(m-h)+2}\int_{\hn}(\lr^h u)^2\dv
	\geq \sum_{i=1}^{k}C_{k,l}^{i}\int_{\hn}\frac{u^2}{r^{2i}}\dv,
	\end{equation}
	 where $C_{k,l}^{k}=C_{k-l,0}^{k-l} \ \Xi_{2(k-l),l/2}^0$ and $C_{k,l}^1=C_{k-l,0}^{1} \ \Xi_{2,l/2}^l$.
\end{theorem}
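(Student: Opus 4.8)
The plan is to mirror the argument used for the even--even case (the proof of \eqref{even_even}), the only structural change being that the ``reduced'' index $k-l$ is now \emph{odd}, so one invokes the odd branch of \eqref{even_and_odd} in place of the even branch. Write $k-l=2(m-h)+1$ and, for $u\in\cchnm$, set $w:=\lr^{h}u$. Then
\begin{align*}
\int_{\hn}|\gr(\lr^{m}u)|^2\dv=\int_{\hn}\big|\gr\big(\lr^{\,m-h}w\big)\big|^2\dv=\int_{\hn}|\gr^{\,k-l}w|^2\dv,
\end{align*}
and applying \eqref{even_and_odd} with the odd index $k-l$ to $w$ (legitimate since $N>2k>2(k-l)$) gives
\begin{align*}
\int_{\hn}|\gr^{\,k-l}w|^2\dv-\bigg(\frac{N-1}{2}\bigg)^{2(k-l)}\int_{\hn}w^2\dv\ \geq\ \sum_{i=1}^{k-l}C_{k-l,0}^{\,i}\int_{\hn}\frac{w^2}{r^{2i}}\dv.
\end{align*}

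First I would dispose of the case $h=0$ (equivalently $l=0$), where the conclusion is exactly \eqref{even_and_odd}; here the conventions $\Xi_{n,0}^{0}=1$ make the stated identities $C_{k,l}^{k}=C_{k-l,0}^{k-l}$ and $C_{k,l}^{1}=C_{k-l,0}^{1}$ automatically consistent. For $h\geq 1$, I would feed each term $\int_{\hn}(\lr^{h}u)^2/r^{2i}\dv=\int_{\hn}w^2/r^{2i}\dv$, $1\leq i\leq k-l$, into Lemma \ref{lemma_6} with $\beta=h$ and $\alpha=2i$. Its hypothesis $0\leq\alpha<N-4\beta$ reads $2i<N-4h$, which, since $i\leq k-l=2(m-h)+1$, is implied by $N>4m+2=2k$ --- precisely our standing assumption. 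This yields
\begin{align*}
\int_{\hn}\frac{w^2}{r^{2i}}\dv\ \geq\ \sum_{j=0}^{2h}\Xi_{2i,h}^{\,j}\int_{\hn}\frac{u^2}{r^{2i+4h-2j}}\dv,
\end{align*}
and substituting back and collecting equal powers of $r$ produces \eqref{odd_even} with each $C_{k,l}^{i}$ a finite sum of products $C_{k-l,0}^{i'}\,\Xi_{2i',h}^{\,j}$ taken over the pairs with $2i'+4h-2j=2i$; in particular $C_{k,l}^{i}>0$ since both factors are positive.

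It remains to identify the two leading coefficients. The exponent $2i'+4h-2j$, with $1\leq i'\leq k-l$ and $0\leq j\leq 2h$, attains its maximum $2(k-l)+4h=2k$ only at $(i',j)=(k-l,0)$ and its minimum $2$ only at $(i',j)=(1,2h)$, both extremal pairs being unique. Hence the coefficient of $r^{-2k}$ is $C_{k,l}^{k}=C_{k-l,0}^{\,k-l}\,\Xi_{2(k-l),h}^{\,0}=C_{k-l,0}^{\,k-l}\,\Xi_{2(k-l),l/2}^{\,0}$, and that of $r^{-2}$ is $C_{k,l}^{1}=C_{k-l,0}^{\,1}\,\Xi_{2,h}^{\,2h}=C_{k-l,0}^{\,1}\,\Xi_{2,l/2}^{\,l}$, as claimed; a routine density argument then delivers the inequality for all $u\in\cchnm$. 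I do not anticipate a genuine obstacle: the argument is structurally identical to that of \eqref{even_even}, and the only points demanding care are the bookkeeping of which $(i',j)$ pairs contribute to the extreme powers of $r$ and the check that the single bound $N>2k$ suffices to apply Lemma \ref{lemma_6} to every term that arises.
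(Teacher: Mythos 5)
Your proposal is correct and follows essentially the same route as the paper: apply the $l=0$ result \eqref{even_and_odd} with odd index $k-l=2(m-h)+1$ to $w=\lr^{h}u$, then feed each weighted term into Lemma \ref{lemma_6} with $\alpha=2i$, $\beta=h$, and read off the extreme coefficients $C_{k,l}^{k}=C_{k-l,0}^{k-l}\,\Xi_{2(k-l),l/2}^{0}$ and $C_{k,l}^{1}=C_{k-l,0}^{1}\,\Xi_{2,l/2}^{l}$. Your extra checks (the admissibility condition $2i<N-4h$ under $N>2k$ and the uniqueness of the extremal exponent pairs) are consistent with, and slightly more explicit than, the paper's argument.
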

\begin{proof}
	Exploiting \eqref{even_and_odd} with $k=2(m-h)+1$ and Lemma \ref{lemma_6} for $\alpha=2i$ and $\beta=h$, we obtain
	\begin{align*}
	& \int_{\hn}|\gr(\lr^m u)|^2\dv=\int_{\hn}|\gr^{2(m-h)+1} (\lr^h u) |^2\dv\\ &\geq \bigg(\frac{N-1}{2}\bigg)^{4(m-h)+2}\int_{\hn}(\lr^h u)^2\dv
	+ \sum_{i=1}^{2(m-h)+1}C_{2(m-h)+1,0}^{i}\int_{\hn}\frac{(\lr^h u)^2}{r^{2i}}\dv\\& \geq
	\bigg(\frac{N-1}{2}\bigg)^{4(m-h)+2}\int_{\hn}(\lr^h u)^2\dv + \sum_{i=1}^{2(m-h)+1}C_{2(m-h)+1,0}^{i}	\bigg[\sum_{j=0}^{2h}\Xi_{2i,h}^j\int_{\hn}\frac{u^2}{r^{2i+4h-2j}}\dv\bigg]\\& =\bigg(\frac{N-1}{2}\bigg)^{4(m-h)+2}\int_{\hn}(\lr^h u)^2\dv+\sum_{i=1}^{2m+1}C_{2m+1,2h}^{i}\int_{\hn}\frac{u^2}{r^{2i}}\dv
	\end{align*}
	with $C_{2m+1,2h}^{2m+1}=C_{2(m-h)+1,0}^{2(m-h)+1} \ \Xi_{4(m-h)+2,h}^0$ and $C_{2m+1,2h}^1=C_{2(m-h)+1,0}^{1} \ \Xi_{2,h}^{2h}$.	
\end{proof}

\subsection{General case $k=2m$ even and $l=2h+1$ odd}
\begin{theorem}
	Let $k=2m>l=2h+1\geq1$ be integers and $N>2k$. Then there exist $k$ positive constants $C_{k,l}^i$ such that for all $u\in\cchnm$, there holds 
	\begin{equation}\label{even_odd}
	\int_{\hn} (\lr^{m} u)^2\dv - \bigg(\frac{N-1}{2}\bigg)^{4(m-h)-2}\int_{\hn}|\gr(\lr^h u)|^2\dv
	\geq \sum_{i=1}^{k}C_{k,l}^{i}\int_{\hn}\frac{u^2}{r^{2i}}\dv,
	\end{equation}
	where 
	\begin{equation*}
	C_{k,l}^1=
	\begin{dcases}
	\frac{(N-1)^{2k-2l-1}}{2^{2k-2l+2}} \ \Xi_{2,(l-1)/2}^{l-1}+C_{k-l-1,0}^{1} \ \Xi_{2,(l+1)/2}^{l+1} & \text{ if } k=2m, \  l=2h+1 \text{ and }m-h\neq 1, \\
	\frac{(N-1)}{16} \ \Xi_{2,(l-1)/2}^{l-1}  & \text{ if } k=2h+2 \text{ and }l=2h+1, \\
	\end{dcases}
	\end{equation*} and
	\begin{equation*}
	C_{k,l}^k=
	\begin{dcases}
	C_{k-l-1,0}^{k-l-1} \ \Xi_{2(k-l-1),(l+1)/2}^0 & \text{ if } k=2m, \  l=2h+1 \text{ and }m-h\neq 1, \\
	\frac{(N-4)^2}{16} \ \Xi_{4,(l-1)/2}^0  & \text{ if } k=2h+2 \text{ and }l=2h+1. \\
	\end{dcases}
	\end{equation*}
\end{theorem}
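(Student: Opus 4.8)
The plan is to imitate the reductions of the preceding subsections, using the factorization $\lr^{m}u=\lr^{\,m-h-1}\big(\lr^{\,h+1}u\big)$ and splitting according to whether $m-h\ge 2$ or $m-h=1$; this dichotomy is precisely the one appearing in the stated formulas for $C_{k,l}^{1}$ and $C_{k,l}^{k}$. Throughout, set $v:=\lr^{h}u$ and $w:=\lr^{h+1}u=\lr v$, both in $\cchnm$, and observe that $|\gr v|^{2}=|\gr(\lr^{h}u)|^{2}=|\gr^{l}u|^{2}$, so that the left-hand side of \eqref{even_odd} is $\int_{\hn}(\lr^{m}u)^{2}\dv-\big(\tfrac{N-1}{2}\big)^{4(m-h)-2}\int_{\hn}|\gr v|^{2}\dv$.

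\emph{Case $m-h\ge 2$.} Since $\lr^{m}u=\gr^{2(m-h-1)}w$ and $N>2k$ exceeds $2\cdot 2(m-h-1)$, I would first apply the $l=0$ inequality \eqref{even_and_odd}, with $k$ replaced there by $2(m-h-1)$, to the function $w$, bounding $\int_{\hn}(\lr^{m}u)^{2}\dv$ below by $\big(\tfrac{N-1}{2}\big)^{4(m-h-1)}\int_{\hn}w^{2}\dv+\sum_{i=1}^{2(m-h-1)}C_{2(m-h-1),0}^{i}\int_{\hn}w^{2}r^{-2i}\dv$. Because $w=\lr v$ and $4(m-h-1)+2=4(m-h)-2$, I would then insert \eqref{r21} (legitimate since $N>2k\ge 5$) into the first summand, producing the principal term $\big(\tfrac{N-1}{2}\big)^{4(m-h)-2}\int_{\hn}|\gr v|^{2}\dv$ plus the two remainders $\big(\tfrac{N-1}{2}\big)^{4(m-h-1)}\tfrac{(N-4)^{2}}{16}\int_{\hn}v^{2}r^{-4}\dv$ and $\big(\tfrac{N-1}{2}\big)^{4(m-h-1)}\tfrac{N-1}{16}\int_{\hn}v^{2}r^{-2}\dv$; transposing the principal term recovers the left side of \eqref{even_odd}. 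To finish I would express everything in powers of $u$ via Lemma~\ref{lemma_6}: with $\beta=h$, $\alpha\in\{2,4\}$ to expand $\int_{\hn}v^{2}r^{-4}\dv$ and $\int_{\hn}v^{2}r^{-2}\dv$ (reading $\Xi_{n,0}^{0}=1$ when $h=0$), and with $\beta=h+1$, $\alpha=2i$ to expand each $\int_{\hn}w^{2}r^{-2i}\dv$. All required dimension conditions reduce to $N>2k$. Summing the finitely many resulting weights $r^{-2},\dots,r^{-2k}$ gives \eqref{even_odd} with manifestly positive coefficients.

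For the extreme coefficients in this case: the weight $r^{-2k}$ is produced only by the $i=2(m-h-1)$ term, at the leading index $j=0$ of the $\beta=h+1$ expansion, giving $C_{2(m-h-1),0}^{2(m-h-1)}\,\Xi_{4(m-h-1),h+1}^{0}=C_{k-l-1,0}^{k-l-1}\,\Xi_{2(k-l-1),(l+1)/2}^{0}$; the weight $r^{-2}$ receives exactly two contributions, one from $\int_{\hn}v^{2}r^{-2}\dv$ at $j=2h$ in the $\beta=h$ expansion, namely $\big(\tfrac{N-1}{2}\big)^{4(m-h-1)}\tfrac{N-1}{16}\,\Xi_{2,h}^{2h}=\tfrac{(N-1)^{2k-2l-1}}{2^{2k-2l+2}}\,\Xi_{2,(l-1)/2}^{l-1}$, and one from the $i=1$ term at $j=2h+2$ in the $\beta=h+1$ expansion, namely $C_{k-l-1,0}^{1}\,\Xi_{2,(l+1)/2}^{l+1}$; their sum is $C_{k,l}^{1}$. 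In the remaining \emph{case $m-h=1$} (so $k=2h+2$) one has $\lr^{m}u=\lr v$, and \eqref{even_and_odd} is not used: applying \eqref{r21} directly to $v$ (now $4(m-h)-2=2$) and expanding the two remainders by Lemma~\ref{lemma_6} with $\beta=h$, $\alpha\in\{2,4\}$, the weight $r^{-2k}$ comes from $\int_{\hn}v^{2}r^{-4}\dv$ at $j=0$ with coefficient $\tfrac{(N-4)^{2}}{16}\,\Xi_{4,(l-1)/2}^{0}$, and $r^{-2}$ from $\int_{\hn}v^{2}r^{-2}\dv$ at $j=2h$ with coefficient $\tfrac{N-1}{16}\,\Xi_{2,(l-1)/2}^{l-1}$, matching the stated formulas.

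The step I expect to cost the most effort is the final bookkeeping: after the two invocations of Lemma~\ref{lemma_6} one must collect the weights $r^{-2},\dots,r^{-2k}$, confirm that every exponent in that range actually occurs (so each $C_{k,l}^{i}$ is a well-defined finite positive sum), and correctly isolate the contributions landing on $i=1$ and $i=k$. Checking that the exponent arithmetic closes up — e.g. $2(k-l-1)+4(h+1)=2k$ and $2+4h-4h=2$ — and that no spurious negative term is introduced is the only genuinely delicate point; all remaining computations (substituting the explicit values of $\Xi_{4,h}^{0}$, $\Xi_{2,h}^{2h}$ and of the constants in \eqref{r21}, and reindexing) are routine.
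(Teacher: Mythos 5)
Your proposal follows the paper's own argument essentially verbatim: the same factorization $\lr^m u=\lr^{m-h-1}(\lr^{h+1}u)$, the same application of \eqref{even_and_odd} with index $2(m-h-1)$ followed by \eqref{r21}, the same expansions via Lemma~\ref{lemma_6} with $(\alpha,\beta)=(2i,h+1)$ and $(\alpha,\beta)=(2,h),(4,h)$, and the same treatment of the case $m-h=1$; your identification of the contributions to $C_{k,l}^1$ and $C_{k,l}^k$ (including the exponent arithmetic and the reduction of all dimension conditions to $N>2k$) is correct and matches the paper's constants.
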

\begin{proof}
	Let $m-h\neq1$. By applying first \eqref{even_and_odd} with $k=2(m-h-1)$, then \eqref{r21} and in the end Lemma \ref{lemma_6} with $\alpha = 2i, \beta = h+1$ once and another time with $\alpha=2,\beta=h$, we obtain
	\begin{align*}
	& \int_{\hn}(\lr^m u)^2\dv=\int_{\hn}(\lr^{m-h-1}(\lr^{h+1} u) )^2\dv \\&\geq \bigg(\frac{N-1}{2}\bigg)^{4(m-h-1)}\int_{\hn}(\lr^{h+1} u)^2\dv
	+ \sum_{i=1}^{2m-2h-2}C_{2m-2h-2,0}^{i}\int_{\hn}\frac{(\lr^{h+1} u)^2}{r^{2i}}\dv\\&\geq \bigg(\frac{N-1}{2}\bigg)^{4(m-h-1)}\bigg[\bigg(\frac{N-1}{2}\bigg)^2\int_{\hn}|\gr(\lr^h u)|^2\dv+ \frac{(N-4)^2}{16} \int_{\hn} \frac{(\lr^h u)^2}{r^{4}} \, {\rm d}v_{\hn}\\&+\frac{(N-1)}{16}\int_{\hn} \frac{(\lr^h u)^2}{r^{2}} \, {\rm d}v_{\hn}\bigg] + \sum_{i=1}^{2m-2h-2}C_{2m-2h-2,0}^{i}\bigg[\sum_{j=0}^{2h+2}\Xi_{2i,h+1}^j\int_{\hn}\frac{u^2}{r^{2i+4h+4-2j}}\dv\bigg]\\&\geq \bigg(\frac{N-1}{2}\bigg)^{4(m-h-1)}\bigg[\bigg(\frac{N-1}{2}\bigg)^2\int_{\hn}|\gr(\lr^h u)|^2\dv+ \frac{(N-4)^2}{16} \int_{\hn} \frac{(\lr^h u)^2}{r^{4}} \, {\rm d}v_{\hn}\bigg]\\&+\frac{(N-1)^{4(m-h)-3}}{2^{4(m-h)}}\bigg[\sum_{j=0}^{2h}\Xi_{2,h}^j\int_{\hn}\frac{u^2}{r^{2+4h-2j}} \ {\rm d}v_{\hn}\bigg] \\& + \sum_{i=1}^{2m-2h-2}C_{2m-2h-2,0}^{i}\bigg[\sum_{j=0}^{2h+2}\Xi_{2i,h+1}^j\int_{\hn}\frac{u^2}{r^{2i+4h+4-2j}} \ {\rm d}v_{\hn}\bigg] \\& =\bigg(\frac{N-1}{2}\bigg)^{4(m-h)-2}\int_{\hn}|\gr(\lr^h u)|^2\dv+\sum_{i=1}^{2m}C_{2m,2h+1}^{i}\int_{\hn}\frac{u^2}{r^{2i}}\dv.
	\end{align*}
	Furthermore, one can observe
	\begin{align*}
	C_{2m,2h+1}^1= \frac{(N-1)^{4(m-h)-3}}{2^{4(m-h)}} \ \Xi_{2,h}^{2h} + C_{2m-2h-2,0}^{1} \ \Xi_{2,h+1}^{2h+2} \text{ and } C_{2m,2h+1}^{2m}=C_{2m-2h-2,0}^{2m-2h-2} \ \Xi_{4m-4h-4,h+1}^0
	\end{align*}
	 and this establishes the result.

	If $m-h=1$, then exploiting \eqref{r21} and Lemma \ref{lemma_6} with $\alpha=4,\beta=h$ once and then $\alpha=2,\beta=h$, \eqref{even_odd} holds with proper constants and this concludes the proof.
\end{proof}

\subsection{General case $k=2m+1$ odd and $l=2h+1$ odd}
\begin{theorem}
	Let $k=2m+1>l=2h+1\geq1$ be integers and $N>2k$. Then there exist $k$ positive constants $C_{k,l}^i$ such that for all $u\in\cchnm$, there holds 
	\begin{equation}\label{odd_odd}
	\int_{\hn} |\gr(\lr^m u)|^2\dv - \bigg(\frac{N-1}{2}\bigg)^{4(m-h)-2}\int_{\hn}|\gr(\lr^h u)|^2\dv
	\geq \sum_{i=1}^{k}C_{k,l}^{i}\int_{\hn}\frac{u^2}{r^{2i}}\dv,
	\end{equation}
	where $C_{k,l}^k=\frac{1}{4} \ \Xi_{2,(k-1)/2}^{0}$ and
	\begin{equation*}
	C_{k,l}^1=
	\begin{dcases}
	\frac{1}{4} \ \Xi_{2,(k-1)/2}^{k-1}+ \frac{(N-1)^2}{4}C_{k-1,l}^{1} & \text{ if } k=2m+1, \  l=2h+1 \text{ and }m-h\neq 1, \\
	\frac{(N-1)^3}{2^6}  \ \Xi_{2,(l-1)/2}^{l-1}+\frac{1}{4} \ \Xi_{2,(l+1)/2}^{l+1} & \text{ if } k=2h+3 \text{ and }l=2h+1. \\
	\end{dcases}
	\end{equation*}
\end{theorem}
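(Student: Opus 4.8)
The plan is to reduce \eqref{odd_odd} to the even--odd inequality \eqref{even_odd} (already established), by stripping off one radial gradient with \eqref{r10} and then converting the leftover weighted term into genuine Hardy remainders via the iterated weighted Rellich inequality of Lemma~\ref{lemma_6}. Fix $u\in\cchnm$. Since $\lr$ is a local operator, $\lr^m u$ and $\lr^h u$ again lie in $\cchnm$, and $\gr^k u=\gr(\lr^m u)$, $\gr^l u=\gr(\lr^h u)$. Applying \eqref{r10} to $\lr^m u$ gives
\begin{align*}
\int_{\hn}|\gr(\lr^m u)|^2\dv\ \geq\ \bigg(\frac{N-1}{2}\bigg)^{2}\int_{\hn}(\lr^m u)^2\dv+\frac14\int_{\hn}\frac{(\lr^m u)^2}{r^2}\dv.
\end{align*}
Because $2m>l=2h+1$ and $N>2k>4m$, inequality \eqref{even_odd} applies to the pair $(2m,2h+1)$ and bounds $\int_{\hn}(\lr^m u)^2\dv$ from below by $\big(\tfrac{N-1}{2}\big)^{4(m-h)-2}\int_{\hn}|\gr(\lr^h u)|^2\dv$ plus $\sum_{i=1}^{2m}C^{i}_{2m,2h+1}\int_{\hn}u^{2}/r^{2i}\dv$ with strictly positive $C^{i}_{2m,2h+1}$. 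Inserting this back, $\int_{\hn}|\gr(\lr^m u)|^2\dv$ is bounded below by $\big(\tfrac{N-1}{2}\big)^{4(m-h)}\int_{\hn}|\gr(\lr^h u)|^2\dv$ (which dominates the term subtracted in \eqref{odd_odd}), plus $\big(\tfrac{N-1}{2}\big)^{2}\sum_{i=1}^{2m}C^{i}_{2m,2h+1}\int_{\hn}u^{2}/r^{2i}\dv$ and the single leftover $\tfrac14\int_{\hn}(\lr^m u)^2/r^2\dv$.

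To deal with the leftover I would apply Lemma~\ref{lemma_6} with $\alpha=2$ and $\beta=m$ (legitimate precisely because $N>2k=4m+2$), which gives $\int_{\hn}(\lr^m u)^2/r^2\dv\ \geq\ \sum_{j=0}^{2m}\Xi_{2,m}^{j}\int_{\hn}u^{2}/r^{2(2m+1-j)}\dv$. Writing $i=2m+1-j$ and merging with the previous sum, each weight $r^{-2i}$ for $1\leq i\leq 2m+1=k$ appears with a coefficient that is a nonnegative combination of strictly positive $\Xi$'s and $C^{i}_{2m,2h+1}$'s; in fact the $\Xi$ contribution alone is positive for every $i$, so all $C^{i}_{k,l}>0$, which proves \eqref{odd_odd}. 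The top weight $r^{-2k}$ comes only from $j=0$, so $C^{k}_{k,l}=\tfrac14\Xi_{2,m}^{0}=\tfrac14\Xi_{2,(k-1)/2}^{0}$; the bottom weight $r^{-2}$ collects $\big(\tfrac{N-1}{2}\big)^{2}C^{1}_{2m,2h+1}$ and the $j=2m$ term $\tfrac14\Xi_{2,m}^{2m}$, whence $C^{1}_{k,l}=\tfrac14\Xi_{2,(k-1)/2}^{k-1}+\tfrac{(N-1)^{2}}{4}C^{1}_{k-1,l}$, the announced value when $m-h\neq1$.

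When $m-h=1$, i.e.\ $k=2h+3$ and $l=2h+1$, the same two steps apply verbatim, except that \eqref{even_odd} is used in its degenerate subcase for the pair $(2h+2,2h+1)$, for which $C^{1}_{2h+2,2h+1}=\tfrac{N-1}{16}\Xi_{2,h}^{2h}$; carrying this through turns the bottom coefficient into $C^{1}_{k,l}=\big(\tfrac{N-1}{2}\big)^{2}\tfrac{N-1}{16}\Xi_{2,h}^{2h}+\tfrac14\Xi_{2,h+1}^{2h+2}=\tfrac{(N-1)^{3}}{2^{6}}\Xi_{2,h}^{2h}+\tfrac14\Xi_{2,h+1}^{2h+2}$, while $C^{k}_{k,l}=\tfrac14\Xi_{2,h+1}^{0}$ as before. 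The residual difficulty is purely bookkeeping: one checks that every dimension threshold invoked is met under $N>2k$ alone (the binding one being Lemma~\ref{lemma_6} with $\beta=m$, which needs exactly $N>4m+2=2k$), that the nested positive constants from \eqref{even_odd}, Lemma~\ref{lemma_6} and \eqref{r10} assemble into strictly positive $C^{i}_{k,l}$, and that the two extreme weights pick up precisely the contributions listed above.
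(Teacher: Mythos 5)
Your proof is correct and essentially coincides with the paper's argument: you apply \eqref{r10} to $\lr^m u$, bound $\int_{\hn}(\lr^m u)^2\dv$ from below via \eqref{even_odd} for the pair $(2m,2h+1)$, and convert the leftover term $\frac14\int_{\hn}(\lr^m u)^2 r^{-2}\dv$ with Lemma \ref{lemma_6} for $\alpha=2$, $\beta=m$ (whose hypothesis is exactly $N>2k$), which is precisely the paper's route and yields the stated $C_{k,l}^k$ and $C_{k,l}^1$. Your handling of $m-h=1$ by invoking the degenerate subcase of \eqref{even_odd} directly, rather than the paper's unrolled use of \eqref{r21} together with Lemma \ref{lemma_6} for $\alpha=2,\beta=h+1$ and $\alpha=2,\beta=h$, is only a repackaging of the same computation and produces identical coefficients.
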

\begin{proof}
	Assume $m-h\neq 1$. Exploiting first \eqref{r10}, then \eqref{even_odd} for the index $k=2m,l=2h+1$ and finally Lemma \ref{lemma_6} with $\alpha=2,\beta=m$, we have 
	\begin{align*}
	\int_{\hn}|\gr(\lr^m u)|^2\dv & \geq \left( \frac{N-1}{2} \right)^{2} \int_{\hn} (\lr^m u)^2 \ {\rm d}v_{\hn} + \frac{1}{4} \int_{\hn} \frac{(\lr^m u)^2}{r^2} \ {\rm d}v_{\hn}\\&=\bigg(\frac{N-1}{2}\bigg)^2\bigg[ \bigg(\frac{N-1}{2}\bigg)^{4(m-h)-2}\int_{\hn}|\gr(\lr^h u)|^2\dv
	\\&+ \sum_{i=1}^{2m}C_{2m,2h+1}^{i}\int_{\hn}\frac{u^2}{r^{2i}}\dv\bigg]+\frac{1}{4}\sum_{j=0}^{2m}\Xi_{2,m}^j\int_{\hn}\frac{u^2}{r^{2+4m-2j}}\dv \\&  =\bigg(\frac{N-1}{2}\bigg)^{4(m-h)}\int_{\hn}|\gr(\lr^h u)|^2\dv+\sum_{i=1}^{2m+1}C_{2m+1,2h+1}^{i}\int_{\hn}\frac{u^2}{r^{2i}}\dv,
	\end{align*}
	where constants are represented by $C_{2m+1,2h+1}^{2m+1}=\frac{1}{4} \ \Xi_{2,m}^{0}$ and $C_{2m+1,2h+1}^1= \frac{1}{4} \ \Xi_{2,m}^{2m}+ \frac{(N-1)^2}{4}C_{2m,2h+1}^{1}$.
	
	If $m-h=1$, then using first \eqref{r10}, then inequality \eqref{r21} and in the end applying Lemma \ref{lemma_6} with  $\alpha=2,\beta=h+1$ first and then $\alpha=2,\beta=h$, we deduce the results.
\end{proof}

Making use of Lemma \ref{lemma_7}, Corollary \ref{cor_4.1} and improved inequalities for lower indices namely \eqref{r10}, \eqref{dr21} and \eqref{dr20} with the preceding technique, we will be obtaining another version of \eqref{imp_r_high_poin}. This result gives a better constant but requires more dimension restriction. Without detailing the proof, just by noting this result, we will finish this section.

\begin{corollary}
	Let $k>l$ be positive integers and $N\geq 4k-1$. There exist $k$ positive constants such that for all $u\in\cchnm$, there holds 
	\begin{align}\label{d_imp_r_high_poin}
	\int_{\hn}  |\nabla_{r}^{k} u|^2 \, {\rm d}v_{\hn}  - \bigg(\frac{N-1}{2}\bigg)^{2(k-l)} \int_{\hn} |\nabla_{r}^{l} u|^2 \, {\rm d}v_{\hn}\geq \sum_{j=1}^{k}D^{j}_{k,l}\int_{\hn}\frac{u^2}{r^{2j}}\dv.
	\end{align}
	Moreover, the leading terms for $r\rightarrow 0$ and $r \rightarrow \infty$, namely $D^{k}_{k,l}$ and $D^{1}_{k,l}$ are given by as follows
	\begin{equation*}
	D^{k}_{k,l}:=
	\begin{dcases}
	D_{k-l,0}^{k-l} \ \zeta_{2(k-l),l/2}^0 & \text{ if }l=2h \text{ and }k>l,\\
	D_{k-l-1,0}^{k-l-1} \ \zeta_{2(k-l-1),(l+1)/2}^0 & \text{ if } k=2m, \  l=2h+1 \text{ and }m-h\neq 1, \\
	\frac{(N-4)^2}{16} \ \zeta_{4,(l-1)/2}^0  & \text{ if } k=2h+2 \text{ and }l=2h+1, \\
	\frac{1}{4} \ \zeta_{2,(l+1)/2}^0 & \text{ if } k=2m+1 \text{ and }l=2h+1,
	\end{dcases}
	\end{equation*} and 
	\begin{equation*}
	D^{1}_{k,l}:=
	\begin{dcases}
	D_{k-l,0}^{1} \ \zeta_{2,l/2}^l & \text{ if }l=2h \text{ and }k>l,\\
	\frac{(N-1)^{2k-2l-1}}{2^{2k-2l+1}} \ \zeta_{2,(l-1)/2}^{l-1}+D_{k-l-1,0}^{1} \ \zeta_{2,(l+1)/2}^{l+1} & \text{ if } k=2m, \  l=2h+1 \text{ and }m-h\neq 1, \\
	\frac{(N-1)}{8} \ \zeta_{2,(l-1)/2}^{l-1}  & \text{ if } k=2h+2 \text{ and }l=2h+1, \\
	\frac{1}{4} \ \zeta_{2,(k-1)/2}^{k-1}+ \frac{(N-1)^2}{4}D_{k-1,l}^{1}& \text{ if } k=2m+1, \  l=2h+1 \text{ and }m-h\neq 1, \\
	\frac{(N-1)^3}{2^5}  \ \zeta_{2,(l-1)/2}^{l-1}+\frac{1}{4} \ \zeta_{2,(l+1)/2}^{l+1}  & \text{ if } k=2h+3 \text{ and }l=2h+1.
	\end{dcases}
	\end{equation*}
\end{corollary}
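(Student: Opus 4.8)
The plan is to follow, essentially line for line, the casework-plus-induction scheme already used to establish \eqref{even_and_odd}, \eqref{even_even}, \eqref{odd_even}, \eqref{even_odd} and \eqref{odd_odd}, but feeding in the sharper ``primed'' ingredients throughout. Concretely I would make three systematic substitutions: (i) wherever the base case $l=0$ entered through \eqref{even_and_odd}, use instead \eqref{deven_and_dodd} of Corollary~\ref{cor_4.1}; (ii) wherever a weighted term $\int_{\hn}(\lr^{\beta}u)^2/r^{2i}\dv$ was re-expanded into pure Hardy weights via the iterated Rellich inequality \eqref{eq_lemma_6} of Lemma~\ref{lemma_6}, use instead Lemma~\ref{lemma_7}, recalling $\zeta^{0}_{\alpha,\beta}=\Xi^{0}_{\alpha,\beta}$ and $\zeta^{2\beta}_{\alpha,\beta}=4^{\beta}\Xi^{2\beta}_{\alpha,\beta}$; (iii) wherever the low-order improvements \eqref{r21} or \eqref{r20} were invoked, replace them by their sharper analogues \eqref{dr21} and \eqref{dr20}, while keeping \eqref{r10} as is.

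Then I would run the five cases exactly as before. For $l=0$ there is nothing to prove: the statement is Corollary~\ref{cor_4.1}. For $l=2h>0$ I would peel off the outer $k-l$ radial operators, apply \eqref{deven_and_dodd} to $\gr^{l}u$, and re-expand each resulting term $\int_{\hn}(\lr^{h}u)^2/r^{2i}\dv$ by Lemma~\ref{lemma_7} with $\alpha=2i$, $\beta=h$, collecting powers of $r$; this is the verbatim analogue of the proofs of \eqref{even_even} and \eqref{odd_even}. For $l=2h+1$ odd I would first spend one derivative via \eqref{r10} (or via \eqref{dr21} in the exceptional sub-case $m-h=1$), reduce to the even index $l-1$ already treated, and again re-expand via Lemma~\ref{lemma_7}; the sub-cases $m-h=1$ in the analogues of \eqref{even_odd} and \eqref{odd_odd} are handled by swapping the single appearance of \eqref{r21} for \eqref{dr21}. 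The leading coefficients then drop out automatically: $D^{k}_{k,l}$ reads off the $j=0$ term of the relevant $\zeta$-sum, which carries $\zeta^{0}=\Xi^{0}$ and is therefore equal to the corresponding $C^{k}_{k,l}$, whereas $D^{1}_{k,l}$ reads off the top $j=2\beta$ term, which carries the extra factor $4^{\beta}$ from $\zeta^{2\beta}=4^{\beta}\Xi^{2\beta}$, together with the improved $u^2/r^2$ coefficients coming from \eqref{dr21}, \eqref{dr20} and Corollary~\ref{cor_4.1} — this is exactly where the gain over \eqref{imp_r_high_poin} originates.

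The step I expect to be the main obstacle is the dimension bookkeeping. Lemma~\ref{lemma_7} carries the more restrictive hypothesis $0\le 2\alpha\le N-8\beta+1$ in place of the $0\le\alpha<N-4\beta$ of Lemma~\ref{lemma_6}, so I would have to track, in each parity case and each exceptional sub-case, the largest weight exponent $\alpha$ (which reaches $2(k-l)$, or $2(k-l-1)$) that occurs simultaneously with the deepest iteration $\beta$ (which reaches $\lfloor l/2\rfloor$, or $\lceil l/2\rceil$), and check that $2\alpha\le N-8\beta+1$ is implied by $N\ge 4k-1$; the extreme configuration gives $4(k-l)+4l\le N+1$, i.e.\ precisely $N\ge 4k-1$, which also comfortably subsumes the $N\ge 7$ needed for \eqref{dr21} and \eqref{dr20} (and for $k=1$ only \eqref{r10} is used, where $N\ge 3=4k-1$ already suffices). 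Once this uniform bound is seen to survive every nested application, positivity of all the $D^{j}_{k,l}$ follows, as before, from positivity of the $\zeta^{j}_{\alpha,\beta}$ and of the coefficients in \eqref{r10}, \eqref{dr21}, \eqref{dr20} and \eqref{deven_and_dodd}. Apart from this bookkeeping and the routine collecting of coefficients, no genuinely new idea is required.
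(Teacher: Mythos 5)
Your proposal is correct and is precisely the route the paper intends (and only sketches): rerun the casework and induction of the preceding subsections with Corollary~\ref{cor_4.1} in place of \eqref{even_and_odd}, Lemma~\ref{lemma_7} in place of Lemma~\ref{lemma_6}, and \eqref{dr21}, \eqref{dr20} in place of \eqref{r21}, \eqref{r20}, reading off $D^k_{k,l}$ from the $\zeta^0=\Xi^0$ terms and $D^1_{k,l}$ from the $\zeta^{2\beta}=4^{\beta}\Xi^{2\beta}$ terms. Your dimension bookkeeping, showing that the worst-case constraint $2\alpha\le N-8\beta+1$ in every nested application reduces exactly to $N\ge 4k-1$, is the right verification and matches the stated hypothesis.
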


\medskip 

{\bf Acknowledgements.} The author would like to thank Professor E. Berchio for carefully reading the manuscript and for some constructive comments which improved the presentation of the paper. The author is also grateful to Professor D. Ganguly for suggesting the problem and useful discussion. This project is supported by the Council of Scientific \& Industrial Research (File no.  09/936(0182)/2017-EMR-I) and by the Ph.D. program at the Indian Institute of Science Education and Research, Pune.

\end{document}